\numberwithin{equation}{section}
\newtheorem{theorem}{Theorem}
\newtheorem{corollary}{Corollary}
\newtheorem{lemma}{Lemma}
\theoremstyle{definition}
\newtheorem{remark}{Remark}
\newcommand{\R}{\mathbb{R}}
\newcommand{\N}{\mathbb{N}}
\newcommand{\Z}{\mathbb{Z}}
\newcommand{\dual}[2]{\langle #1, #2\rangle}
\newcommand{\BLO}{\mathcal{L}}
\newcommand{\Trans}{\mathcal{T}}
\newcommand{\odd}{\mathop{\mathrm{odd}}\nolimits}
\newcommand{\even}{\mathop{\mathrm{even}}\nolimits}
\newcommand{\rad}{\mathop{\mathrm{rad}}\nolimits}
\begin{document}

\begin{center}
\Large{\textbf{Instability of bound states for abstract 
nonlinear Schr\"{o}dinger equations}}
\end{center}

\vspace{5mm}

\begin{center}
{\large Masahito OHTA}\footnote{Permanent address: 
Department of Mathematics, Saitama University, Saitama 338-8570, Japan
({\tt mohta@mail.saitama-u.ac.jp})}
\end{center}
\begin{center}
Institut de Math\'ematiques de Bordeaux, Universit\'e Bordeaux 1, \\
351 cours de la lib\'eration, 33405 Talence Cedex, France
\end{center}

\begin{abstract}
We study the instability of bound states for abstract nonlinear Schr\"odinger equations. 
We prove a new instability result for a borderline case between stability and instability. 
We also reprove some known results in a unified way. 
\end{abstract}

\section{Introduction}\label{sect:intro}

Following a celebrated paper \cite{GSS1} by Grillakis, Shatah and Strauss, 
we consider abstract Hamiltonian systems of the form 
\begin{equation}\label{eq:1.1}
\frac{du}{dt}(t)=\tilde JE'(u(t)),
\end{equation}
where $E$ is the energy functional on a real Hilbert space $X$, 
$J$ is a skew-symmetric operator on $X$, 
and $\tilde J$ is a natural extension of $J$ to the dual space $X^*$. 
We assume that \eqref{eq:1.1} is invariant under a one-parameter group 
$\{\Trans (s)\}_{s\in \R}$ of unitary operators on $X$, 
and study the instability of bound states $\Trans (\omega t)\phi_{\omega}$, 
where $\omega\in \R$ and $\phi_{\omega}$ is a solution of 
the corresponding stationary problem. 
Precise formulation of the problem will be set up 
in Section \ref{sect:form} based on \cite{GSS1}. 
We also borrow some notation from \cite{GSS2}, 
Comech and Pelinovsky \cite{CP} and Stuart \cite{stu}. 
Although it is desirable to work on the same general framework 
as in \cite{GSS1}, we need stronger assumptions for our purpose 
which will be explained below. 
We will formulate our assumptions in order to apply our theorems 
to nonlinear Schr\"odinger equations. 
In particular, we assume that the group $\{\Trans (s)\}_{s\in \R}$ is generated by 
the skew-symmetric operator $J$, that $J$ is bijective from $X$ to itself, 
and that the charge functional $Q$ is positive definite. 
These assumptions exclude nonlinear Klein-Gordon equations 
and KdV type equations from our framework. 
Moreover, we introduce an intermediate space $H$ 
between the energy space $X$ and the dual space $X^*$, 
which is a symmetry-constrained $L^2$ space 
in application to nonlinear Schr\"odinger equations. 
Such space as $H$ does not appear in \cite{GSS1}, 
but it will make the description of the theory simpler. 

In Section \ref{sect:results}, we state two main Theorems and four Corollaries. 
In Theorem \ref{thm1} we give a general sufficient condition 
for instability of bound states in non-degenerate case. 
We clarify that the conditions (A1), (A2a) and (A3) are essential 
in the proof of the instability theorem of \cite{GSS1}. 
We note that Theorem \ref{thm1} is inspired by a recent paper \cite{mae2} of Maeda. 
In fact, the condition (A3) appears explicitly in \cite{mae2} but not in \cite{GSS1}. 
It would be interesting that Theorem \ref{thm1} unifies two different known results, 
Corollaries \ref{cor3} and \ref{cor4}. 
Here, Corollary \ref{cor3} is a classical result due to \cite{GSS1,SS1}, 
while Corollary \ref{cor4} is originally due to \cite{mae2} with modifications. 
Although the key Lemma \ref{lem3} for the proof of Theorem \ref{thm1} 
is the same as Lemma 4.4 of \cite{GSS1}, 
some improvements are made in the proof of Lemma \ref{lem3}. 
For example, the function $\Lambda (\cdot)$ in Lemma \ref{lem3} 
is directly given by \eqref{eq:4.2} in the present paper, 
while in \cite{GSS1} it is determined by solving a differential equation 
and by the implicit function theorem (see (4.6) and Lemma 4.3 of \cite{GSS1}). 
It should be also mentioned that the proof of Lemma \ref{lem3} relies only on 
some simple Taylor expansions as in the proof of the stability theorem 
(see Theorem 3.4 of \cite{GSS1} and \cite{wei2}). 

On the other hand, in Theorem \ref{thm2}, 
we study the instability of bound states in a degenerate or critical case. 
We give two corollaries of Theorem \ref{thm2}. 
Corollary \ref{cor1} is a special case of Theorem \ref{thm2}, 
but it is a new result and will be useful 
to study the instability of bound states at a bifurcation point. 
While, Corollary \ref{cor2} is originally due to Comech and Pelinovsky \cite{CP}. 
We notice that our proof is completely different from that of \cite{CP}. 
In fact, the proof of \cite{CP} is based on a careful analysis of the linearized system, 
while Theorem \ref{thm2} is based on the Lyapunov functional method 
as well as Theorem \ref{thm1}. 
Our proof may be simpler, at least shorter than that of \cite{CP}. 
Another advantage of our approach is that 
Corollary \ref{cor2} requires the minimal regularity $E\in C^3(X,\R)$, 
while a higher regularity of $E$ is needed in \cite{CP} in application 
to nonlinear Schr\"odinger equations, especially for higher dimensional case 
(see Assumption 2.10, Remark 2.11 and Appendix B of \cite{CP}). 
As stated above, our abstract theorems are not applicable to 
nonlinear Klein-Gordon equations. 
For an instability result on NLKG in a critical case, 
see Theorem 4 of \cite{OT}. 

In Section \ref{sect:pre}, we recall some basic lemmas proved by \cite{GSS1}, 
and the proofs of Theorems \ref{thm1} and \ref{thm2} are given 
in Sections \ref{sect:proofthm1} and \ref{sect:proofthm2}, respectively. 
The representation formula \eqref{eq:4.5} of functional $P$ 
plays an important role especially in the proof of Theorem \ref{thm2}. 
Corollaries \ref{cor2}--\ref{cor4} are proved in Section \ref{sect:proofcor}. 
In Section \ref{sect:examples}, we give three examples. 
In Subsection \ref{ss:1}, we consider a simple example 
to explain the role of the assumption (A3) in Theorem \ref{thm1}. 
In Subsection \ref{ss:delta}, we apply Corollaries \ref{cor2} and \ref{cor4} 
to a nonlinear Schr\"odinger equation with a delta function potential, 
and give some remarks to complement the previous results in \cite{FJ,FOO,LFF}. 
In Subsection \ref{ss:system}, we apply Theorem \ref{thm1} to 
a system of nonlinear Schr\"odinger equations, 
and also mention the applicabililty of Theorem \ref{thm2} and Corollary \ref{cor1} 
to the problem at the bifurcation point. 

\section{Formulation}\label{sect:form}

Let $X$ and $H$ be two real Hilbert spaces with dual spaces $X^*$ and $H^*$ such that 
$$X\hookrightarrow H\cong H^*\hookrightarrow X^*$$
with continuous and dense embeddings. 
We denote the inner product and the norm of $X$ by $(\cdot,\cdot)_X$ and $\|\cdot\|_X$, 
and those of $H$ by $(\cdot,\cdot)_H$ and $\|\cdot\|_H$. 
We identify $H$ with $H^*$ by the Riesz isomorphism 
$I:H\to H^*$ defined by $\dual{Iu}{v}=(u,v)_H$ for $u,v\in H$. 
Here and hereafter, $\dual{\cdot}{\cdot}$ denotes 
the pairing between a Banach space and its dual space. 
Let $R:X\to X^*$ be the Riesz isomorphism between $X$ and $X^*$ defined by 
$$\dual{Ru}{v}=(u,v)_X, \quad u,v\in X.$$ 
Let $J\in \BLO(X)$ be bijective and skew-symmetric in the sense that 
\begin{equation}\label{eq:2.1}
(Ju,v)_X=-(u,Jv)_X, \quad (Ju,v)_H=-(u,Jv)_H, \quad u,v\in X.
\end{equation}
The operator $J$ is naturally extended to $\tilde J:X^*\to X^*$ defined by 
$$\dual{\tilde Jf}{u}=-\dual{f}{Ju}, \quad u\in X,~ f\in X^*.$$
Let $\{\Trans (s)\}_{s\in \R}$ be the one-parameter group of unitary operators 
on $X$ generated by $J$. By \eqref{eq:2.1}, we have 
$$\|\Trans (s)u\|_X=\|u\|_X, \quad \|\Trans (s)u\|_H=\|u\|_H, \quad s\in \R,~ u\in X.$$ 
We assume that $\Trans$ is $2\pi$-periodic, that is, 
$\Trans (s+2\pi)=\Trans (s)$ for $s\in \R$. 
The operator $\Trans (s)$ is naturally extended to 
$\tilde \Trans (s):X^*\to X^*$ defined by 
\begin{equation}\label{eq:2.2}
\dual{\tilde \Trans (s)f}{u}=\dual{f}{\Trans (-s)u}, \quad u\in X,~ f\in X^*.
\end{equation}
Then, $\{\tilde \Trans (s)\}_{s\in \R}$ is the one-parameter group 
of unitary operators on $X^*$ generated by $\tilde J$. 
Let $E\in C^2(X,\R)$, and we consider the equation 
\begin{equation}\label{eq:2.3}
\frac{du}{dt}(t)=\tilde JE'(u(t)). 
\end{equation}
We say that $u(t)$ is a solution of \eqref{eq:2.3} in an interval $\mathcal{I}$ of $\R$ 
if $u\in C(\mathcal{I},X)\cap C^1(\mathcal{I},X^*)$ and 
satisfies \eqref{eq:2.3} in $X^*$ for all $t\in \mathcal{I}$. 
We assume that $E$ is invariant under $\Trans$, that is, 
$E(\Trans (s)u)=E(u)$ for $s\in \R$ and $u\in X$. Then 
\begin{equation}\label{eq:2.4}
E'(\Trans (s)u)=\tilde \Trans (s)E'(u), \quad s\in \R,~ u\in X.
\end{equation}
We define $Q:X\to \R$ by 
$$Q(u)=\frac{1}{2}\|u\|_H^2, \quad u\in X.$$
Then, $Q'(u)=Iu$ for $u\in X$, and 
\begin{equation}\label{eq:2.5}
Q(\Trans (s)u)=Q(u), \quad Q'(\Trans (s)u)=\tilde \Trans (s)Q'(u), 
\quad s\in \R, ~ u\in X.
\end{equation}
We assume that the Cauchy problem for \eqref{eq:2.3} 
is locally well-posed in $X$ in the following sense. 

\vspace{2mm} \noindent{\bf Assumption.} 
For each $u_0\in X$ there exists $t_0>0$ depending only on $k$, 
where $\|u_0\|_X\le k$, and there exists a unique solution $u(t)$ of 
\eqref{eq:2.3} in the interval $[0,t_0)$ such that $u(0)=u_0$ and 
$E(u(t))=E(u_0)$, $Q(u(t))=Q(u_0)$ for all $t\in [0,t_0)$. 
\vspace{2mm}

By a {\it bound state} we mean a solution of \eqref{eq:2.3} 
of the form $u(t)=\Trans (\omega t)\phi$, 
where $\omega \in \R$ and $\phi\in X$ satisfies $E'(\phi)=\omega Q'(\phi)$. 

\vspace{2mm} \noindent{\bf Definition.}
We say that a bound state $\Trans (\omega t)\phi$ of \eqref{eq:2.3}
is {\it stable} if for all $\varepsilon>0$ there exists $\delta>0$ 
with the following property. 
If $\|u_0-\phi\|_X<\delta$ and $u(t)$ is the solution of \eqref{eq:2.3} 
with $u(0)=u_0$, then $u(t)$ exists for all $t\ge 0$ and 
$u(t)\in \mathcal{N}_{\varepsilon}(\phi)$ for all $t\ge 0$, where 
$$\mathcal{N}_{\varepsilon}(\phi)
=\{u\in X: \inf_{s\in \R}\|u-\Trans (s)\phi\|_X<\varepsilon\}.$$
Otherwise $\Trans (\omega t)\phi$ is called {\it unstable}. 

\section{Main Results}\label{sect:results}

In Sections \ref{sect:results}--\ref{sect:proofcor}, 
we assume all the requirements in Section \ref{sect:form}. 
For $\omega\in \R$ we define $S_{\omega}:X\to \R$ by 
$S_{\omega}(u)=E(u)-\omega Q(u)$ for $u\in X$. 
To state our main results, we impose the following conditions. 

\vspace{2mm} \noindent {\bf (A1).} 
There exist $\omega\in \R$ and $\phi_{\omega}\in X$ such that 
$S_{\omega}'(\phi_{\omega})=0$, $\phi_{\omega}\ne 0$ 
and $R\phi_{\omega} \in I(X)$. 

\vspace{2mm} \noindent {\bf (A2a).} 
There exists $\psi\in X$ such that $\|\psi\|_H=1$, $(\phi_{\omega},\psi)_H=0$, 
$(J\phi_{\omega},\psi)_H=0$ and $\dual{S_{\omega}''(\phi_{\omega})\psi}{\psi}<0$.

\vspace{2mm} \noindent {\bf (A2b).} 
$E\in C^3(X,\R)$. There exist $\psi\in X$ and $\mu\in \R$ such that 
$\|\psi\|_H=1$, $(\phi_{\omega},\psi)_H=0$, 
$(J\phi_{\omega},\psi)_H=(J\phi_{\omega},\psi)_X=0$ and 
\begin{equation}\label{eq:3.1}
S_{\omega}''(\phi_{\omega})\psi=\mu Q'(\phi_{\omega}), \quad 
\dual{S_{\omega}'''(\phi_{\omega})(\psi,\psi)}{\psi}\ne 3\mu.
\end{equation}

\vspace{2mm} \noindent {\bf (A3).} 
There exists a constant $k_0>0$ such that
\begin{equation}\label{eq:3.2}
\dual{S_{\omega}''(\phi_{\omega})w}{w}\ge k_0 \|w\|_X^2
\end{equation}
for all $w\in X$ satisfying $(\phi_{\omega},w)_H=(J\phi_{\omega},w)_H=(\psi,w)_H=0$. 

\begin{remark}\label{rem1}
By \eqref{eq:2.4} and \eqref{eq:2.5}, we see that 
$S_{\omega}'(\Trans (s)\phi_{\omega})=0$ for all $s\in \R$, 
and that $S_{\omega}''(\phi_{\omega})(J\phi_{\omega})=0$. 
The condition $(J\phi_{\omega},\psi)_X=0$ is assumed in (A2b) but not in (A2a). 
\end{remark}

\begin{remark}\label{rem2}
By (A2b), we have 
$\dual{S_{\omega}''(\phi_{\omega})\psi}{\psi}=\mu (\phi_{\omega},\psi)_H=0$. 
Moreover, $\dual{S_{\omega}''(\phi_{\omega})\psi}{w}=0$ 
for all $w\in X$ satisfying $(w,\phi_{\omega})_H=0$. 
\end{remark}

The main results of this paper are the following. 

\begin{theorem}\label{thm1}
Assume $({\rm A1})$, $({\rm A2a})$ and $({\rm A3})$. 
Then the bound state $\Trans (\omega t)\phi_{\omega}$ is unstable. 
\end{theorem}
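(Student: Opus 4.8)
The plan is to follow the classical Grillakis--Shatah--Strauss strategy: construct a Lyapunov-type functional that decreases along trajectories near the orbit of $\phi_\omega$, and show that the assumptions force solutions starting close to $\phi_\omega$ to leave a fixed tubular neighborhood. Concretely, I would first use the negative direction $\psi$ furnished by (A2a) to build, by a standard argument, a $C^1$ curve $s\mapsto\Lambda(u)$ (with $\Lambda$ translation-invariant in the sense that it respects the $\Trans$-action, using that $(\phi_\omega,\psi)_H=(J\phi_\omega,\psi)_H=0$), so that $\Lambda$ is defined on a neighborhood of the orbit and $\Lambda(\Trans(s)\phi_\omega)$ behaves like the coordinate along $\psi$. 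This is where Lemma~\ref{lem3} from the paper enters; I would quote it rather than reprove it, noting that the paper promises an explicit formula \eqref{eq:4.2} for $\Lambda$.

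Next I would introduce the auxiliary functional
\begin{equation}\label{eq:plan-lyap}
\mathcal{A}(u)=\dual{S_\omega'(u)}{J\,\nabla\Lambda(u)}\quad\text{or an analogous bilinear quantity},
\end{equation}
whose role is that of the function $P$ in \cite{GSS1}: it vanishes on the orbit, its differential along the flow has a definite sign coming from $\tilde J E'$, and the conserved quantities $E$ and $Q$ let one control everything else. The key coercivity input is (A3): on the codimension-three subspace $\{w:(\phi_\omega,w)_H=(J\phi_\omega,w)_H=(\psi,w)_H=0\}$ the Hessian $S_\omega''(\phi_\omega)$ is positive definite, while along $\psi$ it is negative by (A2a); combining these with the constraint that $E$ and $Q$ are frozen gives a lower bound of the form $S_\omega(u)-S_\omega(\phi_\omega)\gtrsim \|u-\Trans(s)\phi_\omega\|_X^2 - C\,\mathcal{A}(u)^2$ for $u$ on the constraint manifold, after modding out the two trivial directions $\phi_\omega$ (charge) and $J\phi_\omega$ (symmetry). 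Then a contradiction argument: if the orbit were stable, the solution stays in $\mathcal{N}_\varepsilon(\phi_\omega)$ for all time; the monotonicity of $\mathcal{A}(u(t))$, its boundedness on the tube, and the coercivity estimate force $\mathcal{A}(u(t))$ to converge while its derivative stays bounded away from zero on a set of positive measure — impossible.

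I would organize the steps as: (i) set up the modulation/constraint decomposition $u=\Trans(s)(\phi_\omega+v)$ with $v$ in the constrained subspace, using (A1) — in particular $R\phi_\omega\in I(X)$ is what makes the $H$-orthogonality constraints compatible with the $X$-geometry; (ii) invoke Lemma~\ref{lem3} to get $\Lambda$ and hence the functional $P$ with its representation; (iii) prove the spectral/coercivity lemma from (A2a)+(A3), handling the two null directions by the conservation laws; (iv) compute $\frac{d}{dt}P(u(t))$ and extract a sign; (v) run the contradiction. The main obstacle I anticipate is step (iii): showing that the indefinite quadratic form $S_\omega''(\phi_\omega)$, which is negative on $\psi$ and positive on a codimension-three space, still yields a useful lower bound once restricted to the energy-charge level set — one must verify that the "bad" direction $\psi$ is exactly compensated by freezing $Q$ (and that the $J\phi_\omega$ direction, on which the Hessian degenerates per Remark~\ref{rem1}, is absorbed by the symmetry modulation), with no residual null directions. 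A secondary technical point is checking that $\Lambda$ and $P$ are well-defined and smooth enough given only $E\in C^2$, and that $P(u(t))$ is differentiable along solutions which a priori lie only in $C(\mathcal{I},X)\cap C^1(\mathcal{I},X^*)$; this is handled as in \cite{GSS1} by a density/regularization argument.
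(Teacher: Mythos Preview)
Your overall GSS-type contradiction scheme is the right one, but the proposal conflates the two functionals that drive the argument, and this is a genuine gap. In the paper there are \emph{two} distinct objects: the bounded ``angle'' functional $A(u)=(M(u),J^{-1}\psi)_H$ and the virial-type functional $P(u)=\dual{E'(u)}{JI^{-1}A'(u)}$; Lemma~\ref{lem2} gives $\frac{d}{dt}A(u(t))=-P(u(t))$. Your $\mathcal{A}(u)=\dual{S_\omega'(u)}{J\nabla\Lambda(u)}$ is (essentially) the paper's $P$, not its $A$. Consequently your step~(iv), ``compute $\frac{d}{dt}P(u(t))$ and extract a sign'', is not what is done and would in fact need $E\in C^3$, which is not assumed here. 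The actual mechanism is: Lemma~\ref{lem3} ($E(u)\ge E(\phi_\omega)+\Lambda(u)P(u)$ on the $Q$-level set) together with the \emph{construction of initial data} (Lemma~\ref{lem4}, which your outline omits) forces $P(u(t))$ itself to keep a sign and to be bounded away from zero by $\delta_\lambda/C_2$; then $A$ is monotone and unbounded, contradicting its obvious boundedness on the tube.

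Your intuition in step~(iii) that ``the bad direction $\psi$ is exactly compensated by freezing $Q$'' is also off. Freezing $Q$ kills the $\phi_\omega$-component (this gives $a=O(\|v\|_X^2)$ in the paper's decomposition), not the $\psi$-component. The $\psi$-coefficient is precisely $c=\Lambda(u)$, and it is \emph{not} removed by the conservation laws; rather, the point of Lemma~\ref{lem3} is the algebraic identity \eqref{eq:5.6},
\[
E(u)-E(\phi_\omega)-\Lambda(u)P(u)=-\tfrac{c^2}{2}\dual{S_\omega''(\phi_\omega)\psi}{\psi}+\tfrac{1}{2}\dual{S_\omega''(\phi_\omega)w}{w}+o(\|v\|_X^2),
\]
so that subtracting the product $\Lambda(u)P(u)$ (not $C\mathcal{A}(u)^2$ as you wrote) turns the indefinite Hessian into a manifestly nonnegative form via (A2a) and (A3). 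Fixing the roles of $A$ and $P$, adding the initial-data lemma, and replacing your quadratic lower bound by the product inequality of Lemma~\ref{lem3} would bring your plan in line with the paper's proof.
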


\begin{theorem}\label{thm2}
Assume $({\rm A1})$, $({\rm A2b})$ and $({\rm A3})$. 
Then the bound state $\Trans (\omega t)\phi_{\omega}$ is unstable. 
\end{theorem}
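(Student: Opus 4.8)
The plan is to follow the Lyapunov functional strategy of Grillakis–Shatah–Strauss, but adapted to the degenerate situation (A2b) where the second variation $S_\omega''(\phi_\omega)$ has a negative direction only at third order along $\psi$. The starting point is the key Lemma \ref{lem3} (the analogue of Lemma 4.4 of \cite{GSS1}), which provides a curve $\Lambda(\cdot)$ and an associated functional; here, however, the relevant quantity is not simply $\langle S_\omega''(\phi_\omega)\psi,\psi\rangle$ (which vanishes by Remark \ref{rem2}) but the third-order term $\langle S_\omega'''(\phi_\omega)(\psi,\psi),\psi\rangle - 3\mu$ appearing in (A2b). I would first use the representation formula \eqref{eq:4.5} for the functional $P$ — promised in the introduction to be central to the proof of Theorem \ref{thm2} — to build, near $\phi_\omega$ on the charge-constrained manifold $\{Q(u)=Q(\phi_\omega)\}$ modulo the symmetry orbit, a scalar function whose sign is governed by that third-order quantity, and whose derivative along the flow is controlled below by a power of $P$ itself.

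The key steps, in order: (1) Construct the tubular-neighborhood coordinates around the orbit $\{\Trans(s)\phi_\omega\}$, splitting a nearby $u$ as $u = \Trans(s)(\phi_\omega + a\psi + w)$ with $w$ in the ``good'' subspace on which (A3) gives coercivity $\langle S_\omega''(\phi_\omega)w,w\rangle \ge k_0\|w\|_X^2$; the constraints in (A3) and the orthogonality conditions in (A2b) are exactly what make this splitting well-defined and make the cross terms vanish to the needed order. (2) Expand $S_\omega$ in these coordinates: the $w$-part is coercive, the $a$-part starts at cubic order with leading coefficient $\tfrac16(\langle S_\omega'''(\phi_\omega)(\psi,\psi),\psi\rangle - 3\mu)$ (the $3\mu$ correction coming from enforcing the charge constraint, via the identity $S_\omega''(\phi_\omega)\psi = \mu Q'(\phi_\omega)$ and a second-order Taylor expansion of $Q$). (3) Define the auxiliary functional $P(u) := \langle S_\omega'(u), \mathcal{Y}(u)\rangle$ for a suitable vector field $\mathcal{Y}$ built from $\psi$ via the construction of Lemma \ref{lem3} / formula \eqref{eq:4.2}, so that $P$ is bounded on $\mathcal{N}_\varepsilon(\phi_\omega)$, $P(\phi_\omega)=0$, and — this is the crux — $\dot P(u(t)) = \langle S_\omega'(u), \tilde J \mathcal{Y}'(u)\tilde J S_\omega'(u)\rangle \ge c>0$ uniformly on the region where the solution has escaped a small ball but stays in $\mathcal{N}_\varepsilon$. (4) Take initial data with $P(u_0)$ of the sign that forces $u(t)$ out of the neighborhood in finite time: since $P$ stays bounded but $\dot P$ is bounded below once $u(t)$ is near the orbit and $S_\omega(u(t)) \le S_\omega(\phi_\omega)$, the solution cannot remain in $\mathcal{N}_\varepsilon(\phi_\omega)$, giving instability.

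The main obstacle is step (3): verifying that $\dot P$ has a \emph{definite sign} bounded away from zero throughout the escape region. In the non-degenerate Theorem \ref{thm1}, this follows because the negative eigenvalue of $S_\omega''(\phi_\omega)$ provides a quadratic gap; here the gap is only cubic, so one must show that $|P(u)|$ and the distance $\mathrm{dist}(u, \{\Trans(s)\phi_\omega\})$ are comparable to appropriate powers of the scalar coordinate $a$, and that the error terms (from $w$, from the curvature of the constraint manifold, and from the difference between $S_\omega'''$ at $\phi_\omega$ and nearby points — here only $E\in C^3$ is available, so no more than three derivatives may be used) are genuinely higher order. The representation \eqref{eq:4.5} is designed precisely to make this bookkeeping clean, reducing $\dot P$ to an explicit expression in which the leading term is a nonzero constant multiple of $(\langle S_\omega'''(\phi_\omega)(\psi,\psi),\psi\rangle - 3\mu)^2$ or a fixed power thereof, with everything else absorbed. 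Once that lower bound on $\dot P$ is in hand, the finite-time-escape argument is the same short contradiction as in the proof of Theorem \ref{thm1}, and the theorem follows.
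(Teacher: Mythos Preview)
Your steps (1) and (2) are essentially right: the tubular decomposition and the cubic expansion $S_\omega(\varphi_\lambda)=S_\omega(\phi_\omega)-\tfrac{\nu}{6}\lambda^3+o(\lambda^3)$, with $\nu=3\mu-\dual{S_\omega'''(\phi_\omega)(\psi,\psi)}{\psi}$, are exactly what the paper establishes in Lemma~\ref{lem6}. The gap is in steps (3)--(4), where you misidentify which functional drives the contradiction.

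You propose to show that $P$ is bounded on $\mathcal{N}_\varepsilon(\phi_\omega)$ while $\dot P(u(t))\ge c>0$. The formula you write for $\dot P$ drops the term $\dual{S_\omega''(u)\dot u}{\mathcal{Y}(u)}$, and in any case there is no evident reason $\dot P$ should be uniformly positive on the escape region: from the expansion $P(u)\approx -\tfrac{\nu}{2}\Lambda(u)^2$, so $\dot P$ is governed by $\Lambda\dot\Lambda$, which the energy gap alone does not control. Your step (3) as stated would not close.

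The paper's mechanism is different. The \emph{a priori} bounded functional is $A(u)=(M(u),J^{-1}\psi)_H$, with $\dot A=-P$ by Lemma~\ref{lem2}. The crux is Lemma~\ref{lem5}: on the charge manifold near the orbit,
\[
E(u)-E(\phi_\omega)\;\ge\;\frac{\nu}{|\nu|}\,k^*\,P(u).
\]
This is proved by combining (i) the coercivity (A3), which (since the quadratic $\psi$-contribution vanishes by Remark~\ref{rem2}) gives $E(u)-E(\phi_\omega)\ge \tfrac{k_0}{2}\|w\|_X^2-o(\|v\|_X^2)$, with (ii) the representation~\eqref{eq:4.5}, which after expanding $S_\omega'(M(u))$ to \emph{second} order yields $\bigl|P(u)+\tfrac{\nu}{2}c^2\bigr|\le k\bigl(|c|\,\|w\|_X+\|w\|_X^2\bigr)+o(\|v\|_X^2)$ with $c=\Lambda(u)$. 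Choosing $k^*$ small and adding, the $\|w\|_X^2$ terms cooperate, leaving a positive multiple of $c^2+\|w\|_X^2$ that dominates all remainders. It is \emph{this} inequality --- not a bound on $\dot P$ --- that the condition $\nu\ne 0$ in (A2b) is used for. With it, any trajectory satisfying $E(u(t))<E(\phi_\omega)$ and $Q(u(t))=Q(\phi_\omega)$ has $-\tfrac{\nu}{|\nu|}P(u(t))\ge \delta_\lambda/k^*>0$ for all $t$, so $\dot A$ has a fixed sign bounded away from zero and $|A|\to\infty$, the same endgame as in Theorem~\ref{thm1}.
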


The following Corollary \ref{cor1} is a special case of Theorem \ref{thm2} 
such that $\mu=0$ in $({\rm A2b})$. When $\mu=0$ in $({\rm A2b})$, 
the kernel of $S_{\omega}''(\phi_{\omega})$ contains a nontrivial element $\psi$ 
other than $J\phi_{\omega}$ which comes from the symmetry (see Remark \ref{rem1}). 
This is a typical situation at a bifurcation point 
(see Case (ii) of Example D in Section 6 of \cite{GSS1} and \cite{KKSW}), 
and Corollary \ref{cor1} will be useful to study the instability 
of bound states at the bifurcation point (see Subsection \ref{ss:system}). 

\begin{corollary}\label{cor1}
Assume $({\rm A1})$ and $E\in C^3(X,\R)$. 
Assume further that there exists $\psi\in X\setminus\{0\}$ such that 
$(\phi_{\omega},\psi)_H=0$, $(J\phi_{\omega},\psi)_H=(J\phi_{\omega},\psi)_X=0$, 
and that the kernel of $S_{\omega}''(\phi_{\omega})$ is spanned 
by $J\phi_{\omega}$ and $\psi$. 
If $\dual{S_{\omega}'''(\phi_{\omega})(\psi,\psi)}{\psi}\ne 0$ and $({\rm A3})$ holds, 
then the bound state $\Trans (\omega t)\phi_{\omega}$ is unstable. 
\end{corollary}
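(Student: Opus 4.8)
The plan is to derive Corollary \ref{cor1} as a direct consequence of Theorem \ref{thm2}, so the work reduces to checking that the hypotheses here imply the hypothesis package $(\mathrm{A1})$, $(\mathrm{A2b})$, $(\mathrm{A3})$ with a suitable choice of $\mu$. Condition $(\mathrm{A1})$ is assumed verbatim, and $(\mathrm{A3})$ is assumed verbatim, so the only real task is to produce $(\mathrm{A2b})$ — in particular to exhibit the scalar $\mu\in\R$ with $S_\omega''(\phi_\omega)\psi=\mu Q'(\phi_\omega)$ and to verify the nondegeneracy inequality $\dual{S_\omega'''(\phi_\omega)(\psi,\psi)}{\psi}\ne 3\mu$.

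First I would note that the vector $\psi$ supplied by the corollary already satisfies the normalization and orthogonality requirements of $(\mathrm{A2b})$ except possibly $\|\psi\|_H=1$; since $\psi\ne 0$ and $Q$ is positive definite we have $\|\psi\|_H>0$, so after rescaling $\psi$ (which preserves membership in $\ker S_\omega''(\phi_\omega)$ and all the homogeneous orthogonality conditions) we may assume $\|\psi\|_H=1$. Next, because $\psi\in\ker S_\omega''(\phi_\omega)$, we have $S_\omega''(\phi_\omega)\psi=0$ in $X^*$, so $(\mathrm{A2b})$'s first identity holds with $\mu=0$: indeed $0=\mu Q'(\phi_\omega)$ trivially. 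With this choice $\mu=0$, the second condition in \eqref{eq:3.1} becomes exactly $\dual{S_\omega'''(\phi_\omega)(\psi,\psi)}{\psi}\ne 0$, which is precisely the hypothesis imposed in the corollary. Thus all three conditions $(\mathrm{A1})$, $(\mathrm{A2b})$, $(\mathrm{A3})$ hold, and Theorem \ref{thm2} yields instability.

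The one point that needs a little care — and which I expect to be the only genuine (if minor) obstacle — is reconciling the description ``the kernel of $S_\omega''(\phi_\omega)$ is spanned by $J\phi_\omega$ and $\psi$'' with the requirement in $(\mathrm{A2b})$ that $\psi$ be a specified element rather than merely a kernel vector, and in particular making sure the rescaling does not disturb $(\mathrm{A3})$: but $(\mathrm{A3})$ refers to $\psi$ only through the orthogonality constraint $(\psi,w)_H=0$, which is scale-invariant, so no conflict arises. One should also observe that $J\phi_\omega$ and $\psi$ are genuinely independent in the kernel: $\psi$ is $H$-orthogonal to both $\phi_\omega$ and $J\phi_\omega$, and if $\psi$ were a multiple of $J\phi_\omega$ then $(J\phi_\omega,\psi)_H=0$ would force $\psi=0$, contradicting $\psi\ne 0$ (here using that $Q$ is positive definite, hence $\|J\phi_\omega\|_H=\|\phi_\omega\|_H\ne 0$ by $\phi_\omega\ne 0$). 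Hence the kernel is genuinely two-dimensional and the setup of $(\mathrm{A2b})$ with $\mu=0$ is consistent. Putting these remarks together, the corollary follows immediately from Theorem \ref{thm2}.
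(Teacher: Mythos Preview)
Your proposal is correct and matches the paper's own approach exactly: the paper states immediately after Theorem~\ref{thm2} that Corollary~\ref{cor1} is the special case $\mu=0$ of $(\mathrm{A2b})$, and gives no further proof. Your verification that the rescaling of $\psi$ preserves both $(\mathrm{A3})$ and the cubic nondegeneracy condition (by homogeneity) fills in the only details the paper leaves implicit.
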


Next, we show that some known results are obtained 
as corollaries of Theorems \ref{thm1} and \ref{thm2}. 
For this purpose, we impose the following conditions. 

\vspace{2mm} \noindent {\bf (B1).} 
There exist an open interval $\Omega$ of $\R$ and 
a mapping $\omega \mapsto \phi_{\omega}$ from $\Omega$ to $X$ which is $C^1$ 
such that for each $\omega \in \Omega$, 
$S_{\omega}'(\phi_{\omega})=0$, $\phi_{\omega}\ne 0$, $R\phi_{\omega} \in I(X)$ 
and $(J\phi_{\omega},\phi_{\omega}')_H=(J\phi_{\omega},\phi_{\omega}')_X=0$, 
where $\phi_{\omega}'=d\phi_{\omega}/d\omega$. 

\vspace{2mm} \noindent {\bf (B2a).} 
There exist a negative constant $\lambda_{\omega}<0$ and a vector $\chi_{\omega}\in X$ 
such that $S_{\omega}''(\phi_{\omega})\chi_{\omega}=\lambda_{\omega}I\chi_{\omega}$, 
$\|\chi_{\omega}\|_H=1$, and $\dual{S_{\omega}''(\phi_{\omega})p}{p}>0$ for all $p\in X$ 
satisfying $(\chi_{\omega},p)_H=(J\phi_{\omega},p)_H=0$ and $p\ne 0$. 

\vspace{2mm} \noindent {\bf (B2b).} 
There exist two negative constants $\lambda_{0,\omega}$, $\lambda_{1,\omega}<0$ 
and vectors $\chi_{0,\omega}$, $\chi_{1,\omega}\in X$ such that 
$(\chi_{0,\omega},\chi_{1,\omega})_H=(\chi_{1,\omega},\phi_{\omega})_H=0$, 
$$S_{\omega}''(\phi_{\omega})\chi_{j,\omega}=\lambda_{j,\omega}I\chi_{j,\omega}, \quad 
\|\chi_{j,\omega}\|_H=1 \qquad (j=0,1),$$
and $\dual{S_{\omega}''(\phi_{\omega})p}{p}>0$ for all $p\in X$ satisfying 
$(\chi_{0,\omega},p)_H=(\chi_{1,\omega},p)_H=(J\phi_{\omega},p)_H=0$ and $p\ne 0$. 

\vspace{2mm} \noindent {\bf (B3).} 
The functional $u\mapsto \dual{S_{\omega}''(\phi_{\omega})u}{u}$ 
is weakly lower semi-continuous on $X$, 
and there exist positive constants $C_1$ and $C_2$ such that 
\begin{equation}\label{eq:3.3}
C_1\|u\|_X^2\le \dual{S_{\omega}''(\phi_{\omega})u}{u}+C_2\|u\|_H^2
\end{equation}
for all $u\in X$. 
Moreover, if a sequence $(u_n)$ of $X$ satisfies $\|u_n\|_X=1$ for all $n\in \N$ 
and $u_n\rightharpoonup 0$ weakly in $X$, then 
$\liminf_{n\to \infty}\dual{S_{\omega}''(\phi_{\omega})u_n}{u_n}>0$. 
\vspace{2mm}

We define $d(\omega)=S_{\omega}(\phi_{\omega})$ for $\omega\in \Omega$. 
As a corollary of Theorem \ref{thm2}, we have the following result 
which was proved in \cite{CP} 
assuming a higher regularity of the energy functional $E$. 

\begin{corollary}\label{cor2}
Assume $({\rm B1})$ and that for each $\omega\in \Omega$, 
$({\rm B2a})$ and $({\rm B3})$ hold. 
Assume further that $E\in C^3(X,\R)$ and that 
$\omega \mapsto \phi_{\omega}$ is $C^2$ from $\Omega$ to $X$. 
If $\omega_0\in \Omega$ satisfies $d''(\omega_0)=0$ and $d'''(\omega_0)\ne 0$, 
then the bound state $\Trans (\omega_0 t)\phi_{\omega_0}$ is unstable. 
\end{corollary}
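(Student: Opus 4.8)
The plan is to derive Corollary \ref{cor2} from Theorem \ref{thm2} by verifying its hypotheses (A1), (A2b) and (A3) at $\omega=\omega_0$, with a carefully chosen test vector $\psi$ built from the spectral data of $S_{\omega_0}''(\phi_{\omega_0})$ and from $\phi_{\omega_0}'$. Condition (A1) is immediate from (B1). The heart of the matter is to choose $\psi$ so that $S_{\omega_0}''(\phi_{\omega_0})\psi=\mu Q'(\phi_{\omega_0})$ with the sign/non-degeneracy conditions of \eqref{eq:3.1}; the natural candidate is $\psi = a\chi_{\omega_0} + b\,\phi_{\omega_0}'$ (plus possibly a multiple of $\phi_{\omega_0}$), since differentiating the identity $S_{\omega}'(\phi_{\omega})=0$ in $\omega$ gives the key relation $S_{\omega_0}''(\phi_{\omega_0})\phi_{\omega_0}' = Q'(\phi_{\omega_0})$. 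This is the standard computation linking $d'(\omega)=-Q(\phi_\omega)$, $d''(\omega)=-\dual{Q'(\phi_\omega)}{\phi_\omega'} = -\dual{S_\omega''(\phi_\omega)\phi_\omega'}{\phi_\omega'}$, so $d''(\omega_0)=0$ encodes a degeneracy of the quadratic form $\dual{S_{\omega_0}''(\phi_{\omega_0})\cdot}{\cdot}$ on the symmetry-constrained space.

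First I would record the derivative identities: differentiating $E'(\phi_\omega)=\omega Q'(\phi_\omega)$ yields $S_{\omega}''(\phi_\omega)\phi_\omega' = Q'(\phi_\omega)$ (using $Q'$ linear, $Q''=I$), hence $d'(\omega)=\dual{S_\omega'(\phi_\omega)}{\phi_\omega'}-Q(\phi_\omega)=-Q(\phi_\omega)$ and $d''(\omega)=-\dual{Q'(\phi_\omega)}{\phi_\omega'}=-\dual{S_\omega''(\phi_\omega)\phi_\omega'}{\phi_\omega'}$. Under (B2a), the quadratic form $\dual{S_{\omega_0}''(\phi_{\omega_0})\cdot}{\cdot}$ has exactly one negative direction $\chi_{\omega_0}$ modulo the null direction $J\phi_{\omega_0}$; writing $\phi_{\omega_0}' = \alpha\chi_{\omega_0} + r$ with $r$ in the positive cone (modulo $J\phi_{\omega_0}$, which can be arranged using the constraint $(J\phi_{\omega_0},\phi_{\omega_0}')_H=0$ from (B1)), the condition $d''(\omega_0)=0$ forces a precise balance $\lambda_{\omega_0}\alpha^2 = -\dual{S_{\omega_0}''(\phi_{\omega_0})r}{r}$. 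I would then set $\psi$ to be the (normalized) vector in $\mathrm{span}\{\chi_{\omega_0},\phi_{\omega_0}'\}$ that is $H$-orthogonal to $\phi_{\omega_0}$ and lies in the kernel direction of $S_{\omega_0}''(\phi_{\omega_0})$ modulo $\mathrm{range}$ — concretely, a combination making $S_{\omega_0}''(\phi_{\omega_0})\psi$ a multiple $\mu Q'(\phi_{\omega_0})$, with $\mu$ computable in terms of $\lambda_{\omega_0},\alpha$; one checks $(J\phi_{\omega_0},\psi)_H=(J\phi_{\omega_0},\psi)_X=0$ from (B1) and the spectral orthogonalities. The remaining inequality $\dual{S_{\omega_0}'''(\phi_{\omega_0})(\psi,\psi)}{\psi}\ne 3\mu$ must be translated into the hypothesis $d'''(\omega_0)\ne 0$: differentiating $d''$ once more, and using $S_{\omega_0}''(\phi_{\omega_0})\phi_{\omega_0}' = Q'(\phi_{\omega_0})$ together with the third-order Taylor data, yields a formula for $d'''(\omega_0)$ whose leading term is (a nonzero multiple of) $\dual{S_{\omega_0}'''(\phi_{\omega_0})(\psi,\psi)}{\psi}-3\mu$ when $\psi$ is proportional to the degenerate combination; this is where the $C^3$ regularity of $E$ and the $C^2$ regularity of $\omega\mapsto\phi_\omega$ are exactly what is needed.

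Finally, (A3) needs the coercivity $\dual{S_{\omega_0}''(\phi_{\omega_0})w}{w}\ge k_0\|w\|_X^2$ on $\{(\phi_{\omega_0},w)_H=(J\phi_{\omega_0},w)_H=(\psi,w)_H=0\}$. Since (B2a) already gives positive definiteness of this form on the codimension-two space $\{(\chi_{\omega_0},w)_H=(J\phi_{\omega_0},w)_H=0\}$, and $\psi$ was built inside $\mathrm{span}\{\chi_{\omega_0},\phi_{\omega_0}'\}$, I would show that the constrained space in (A3) differs from a subspace of the (B2a)-space by at most a finite-dimensional perturbation involving $\phi_{\omega_0}'$, and upgrade positivity to uniform coercivity using the weak-lower-semicontinuity and the inequality \eqref{eq:3.3} in (B3) by the standard argument (a minimizing-sequence / weak-limit contradiction, exactly as in Lemma 4.1-type results of \cite{GSS1}). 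The main obstacle I anticipate is the algebra of \emph{precisely} identifying $\mu$ and proving the equivalence ``$d'''(\omega_0)\ne 0 \iff \dual{S_{\omega_0}'''(\phi_{\omega_0})(\psi,\psi)}{\psi}\ne 3\mu$'': one must differentiate $d''(\omega)=-\dual{S_\omega''(\phi_\omega)\phi_\omega'}{\phi_\omega'}$ carefully, keep track of the $\partial_\omega S_\omega'' = -Q''=-I$ term and the $\phi_\omega''$ terms (which drop out against $S_\omega'(\phi_\omega)=0$ after one integration by parts in the pairing), and then restrict to $\omega=\omega_0$ where $\phi_{\omega_0}'$ is, up to the harmless $r$ and symmetry pieces, parallel to $\psi$. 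Getting the constant $3$ to come out correctly is the delicate bookkeeping step.
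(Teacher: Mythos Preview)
Your outline is on the right track and would ultimately succeed, but you are making the construction of $\psi$ considerably harder than necessary. The paper's proof takes exactly the route you describe---verify (A1), (A2b), (A3) and invoke Theorem~\ref{thm2}---but with the much simpler choice
\[
\psi \;=\; \frac{\phi_{\omega_0}'}{\|\phi_{\omega_0}'\|_H},
\qquad \mu \;=\; \frac{1}{\|\phi_{\omega_0}'\|_H}.
\]
No combination with $\chi_{\omega_0}$ is needed: the identity $S_{\omega_0}''(\phi_{\omega_0})\phi_{\omega_0}'=Q'(\phi_{\omega_0})$ already puts $S_{\omega_0}''(\phi_{\omega_0})\psi$ in the exact form $\mu Q'(\phi_{\omega_0})$, and indeed if you try $\psi=a\chi_{\omega_0}+b\phi_{\omega_0}'$ and demand this, you are forced to $a=0$ unless $I\chi_{\omega_0}$ happens to be parallel to $Q'(\phi_{\omega_0})$. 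The orthogonality $(\phi_{\omega_0},\psi)_H=0$ is then precisely $d''(\omega_0)=0$, and $(J\phi_{\omega_0},\psi)_H=(J\phi_{\omega_0},\psi)_X=0$ come straight from (B1).

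With this choice the ``delicate bookkeeping'' you anticipate for the constant $3$ disappears. Differentiating $S_{\omega}''(\phi_{\omega})\phi_{\omega}'=Q'(\phi_{\omega})$ once more gives $S_{\omega}'''(\phi_{\omega})(\phi_{\omega}',\phi_{\omega}')+S_{\omega}''(\phi_{\omega})\phi_{\omega}''=2Q''(\phi_{\omega})\phi_{\omega}'$, and combining this with $d''(\omega)=-\dual{Q'(\phi_\omega)}{\phi_\omega'}$ yields the clean formula
\[
d'''(\omega_0)=\dual{S_{\omega_0}'''(\phi_{\omega_0})(\phi_{\omega_0}',\phi_{\omega_0}')}{\phi_{\omega_0}'}-3\|\phi_{\omega_0}'\|_H^2,
\]
so that $\dual{S_{\omega_0}'''(\phi_{\omega_0})(\psi,\psi)}{\psi}-3\mu=\mu^3 d'''(\omega_0)$, and the nondegeneracy in (A2b) is exactly $d'''(\omega_0)\ne 0$.

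For (A3) your plan is correct; in the paper this is packaged as Lemma~\ref{lem7}, which shows that (B2a), (B3), together with the relation $S_{\omega_0}''(\phi_{\omega_0})\psi=\lambda I\psi+\mu Q'(\phi_{\omega_0})$ for some $\lambda\le 0$ (here $\lambda=0$), imply (A3). This is where $\chi_{\omega_0}$ is actually used---to rule out a second nonpositive direction---not in the construction of $\psi$.
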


On the other hand, as corollaries of Theorem \ref{thm1}, we have the following results. 
Corollary \ref{cor3} is a classical result due to \cite{GSS1,SS1}, 
while Corollary \ref{cor4} is an abstract generalization of the result in \cite{mae2}. 

\begin{corollary}\label{cor3}
Assume $({\rm B1})$ and that for each $\omega\in \Omega$, 
$({\rm B2a})$ and $({\rm B3})$ hold. 
If $\omega_0\in \Omega$ satisfies $d''(\omega_0)<0$, 
then the bound state $\Trans (\omega_0 t)\phi_{\omega_0}$ is unstable. 
\end{corollary}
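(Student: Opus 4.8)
\textbf{Proof proposal for Corollary \ref{cor3}.}

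The plan is to deduce Corollary \ref{cor3} from Theorem \ref{thm1} by verifying (A1), (A2a) and (A3) at $\omega=\omega_0$, using the classical Grillakis--Shatah--Strauss machinery to extract the unstable direction from the sign of $d''(\omega_0)$. Assumption (A1) is immediate from (B1) with $\phi_{\omega_0}$. The heart of the matter is to produce, under the hypothesis $d''(\omega_0)<0$, a vector $\psi\in X$ with $\|\psi\|_H=1$, $(\phi_{\omega_0},\psi)_H=0$, $(J\phi_{\omega_0},\psi)_H=0$ and $\dual{S_{\omega_0}''(\phi_{\omega_0})\psi}{\psi}<0$, which is exactly (A2a).

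First I would record the standard identity relating $d$ and the constrained form of $S_{\omega}''$. Differentiating $S_{\omega}'(\phi_{\omega})=0$ in $\omega$ gives $S_{\omega}''(\phi_{\omega})\phi_{\omega}'=Q'(\phi_{\omega})=I\phi_{\omega}$, and differentiating $d(\omega)=S_{\omega}(\phi_{\omega})$ twice yields $d'(\omega)=-Q(\phi_{\omega})$ and $d''(\omega)=-(\phi_{\omega},\phi_{\omega}')_H=-\dual{S_{\omega}''(\phi_{\omega})\phi_{\omega}'}{\phi_{\omega}'}$. Hence $d''(\omega_0)<0$ means $\dual{S_{\omega_0}''(\phi_{\omega_0})\phi_{\omega_0}'}{\phi_{\omega_0}'}>0$; this vector $\phi_{\omega_0}'$ is \emph{not} itself the desired $\psi$, since the quadratic form is positive on it. Instead, (B2a) tells us that $S_{\omega_0}''(\phi_{\omega_0})$ has exactly one negative direction (the eigenvector $\chi_{\omega_0}$ with eigenvalue $\lambda_{\omega_0}<0$) modulo the constraints $(\chi_{\omega_0},\cdot)_H=(J\phi_{\omega_0},\cdot)_H=0$. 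The construction is then: search for $\psi$ in the two-dimensional span of $\chi_{\omega_0}$ and $\phi_{\omega_0}'$ (after projecting off $J\phi_{\omega_0}$ if necessary, which is legitimate because $S_{\omega_0}''(\phi_{\omega_0})J\phi_{\omega_0}=0$ by Remark \ref{rem1} and $(J\phi_{\omega_0},\phi_{\omega_0}')_H=0$ by (B1)), impose the two linear constraints $\|\psi\|_H=1$ and $(\phi_{\omega_0},\psi)_H=0$, and then use the spectral decomposition together with the fact that $\dual{S_{\omega_0}''(\phi_{\omega_0})\phi_{\omega_0}'}{\phi_{\omega_0}'}>0$ to force the quadratic form to be negative on the resulting vector. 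Concretely, write $\phi_{\omega_0}'=a\chi_{\omega_0}+r$ with $r$ in the positivity cone of (B2a); if $a\ne0$ then $\chi_{\omega_0}$ has a component along the ``bad'' direction, and a standard two-variable optimization (minimizing $\dual{S_{\omega_0}''\psi}{\psi}$ over the constrained line) shows the minimum is negative precisely when $d''(\omega_0)<0$. The degenerate subcase $a=0$, i.e. $\chi_{\omega_0}\perp_H\phi_{\omega_0}$ cannot occur together with $d''(\omega_0)<0$ because then $\phi_{\omega_0}'$ would lie in the positivity cone, contradicting $d''(\omega_0)=-\dual{S_{\omega_0}''\phi_{\omega_0}'}{\phi_{\omega_0}'}<0$; so only the generic case remains and the construction goes through.

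Once $\psi$ is in hand, (A3) follows directly from (B2a) and (B3): the positivity of $S_{\omega_0}''(\phi_{\omega_0})$ on the codimension-two subspace $\{(\chi_{\omega_0},\cdot)_H=(J\phi_{\omega_0},\cdot)_H=0\}$ together with the coercivity estimate \eqref{eq:3.3} and the weak-lower-semicontinuity/no-vanishing condition upgrades pointwise positivity to the uniform bound $\dual{S_{\omega_0}''(\phi_{\omega_0})w}{w}\ge k_0\|w\|_X^2$ on the relevant three-constraint subspace; this is a routine compactness argument (if the infimum of the Rayleigh quotient over the unit $X$-sphere in that subspace were zero, a minimizing sequence would either converge, giving a nontrivial null vector contradicting (B2a), or converge weakly to zero, contradicting the last clause of (B3)). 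The main obstacle is the linear-algebra step in the previous paragraph: one must be careful that the three Hilbert-space constraints defining the test subspace in (A3) (namely orthogonality to $\phi_{\omega_0}$, $J\phi_{\omega_0}$ and $\psi$ in $H$) are compatible with the two constraints in (B2a) and that the chosen $\psi$ genuinely satisfies $(J\phi_{\omega_0},\psi)_H=0$ — this forces the initial projection off $J\phi_{\omega_0}$ and a verification that it does not destroy the sign of the quadratic form, which works because $J\phi_{\omega_0}$ lies in the kernel of $S_{\omega_0}''(\phi_{\omega_0})$. With (A1), (A2a) and (A3) verified, Theorem \ref{thm1} gives instability of $\Trans(\omega_0 t)\phi_{\omega_0}$.
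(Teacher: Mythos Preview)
Your overall strategy---verify (A1), (A2a), (A3) at $\omega_0$ and invoke Theorem~\ref{thm1}---is exactly the paper's, and (A1) is indeed immediate from (B1). There is, however, a genuine gap in the verification of (A3), and a smaller sign slip in the (A2a) step.

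For (A2a): your claim that the subcase $a=0$ ``cannot occur together with $d''(\omega_0)<0$'' is backwards. If $a=0$ then $\phi_{\omega_0}'$ lies in the positivity cone of (B2a), giving $\dual{S_{\omega_0}''(\phi_{\omega_0})\phi_{\omega_0}'}{\phi_{\omega_0}'}>0$; but $d''(\omega_0)<0$ says exactly the same thing, so there is no contradiction. This is harmless for the construction itself, since when $a=0$ one has $(\phi_{\omega_0},\chi_{\omega_0})_H=\lambda_{\omega_0}a=0$ and $\psi=\chi_{\omega_0}$ works directly; but the argument as written is wrong.

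The real problem is (A3). You assert that positivity on the (B2a) subspace $\{(\chi_{\omega_0},\cdot)_H=(J\phi_{\omega_0},\cdot)_H=0\}$ plus (B3) yields the coercivity \eqref{eq:3.2} on $W=\{(\phi_{\omega_0},\cdot)_H=(J\phi_{\omega_0},\cdot)_H=(\psi,\cdot)_H=0\}$. But $W$ is \emph{not} contained in the (B2a) subspace: a vector $w\in W$ need not satisfy $(\chi_{\omega_0},w)_H=0$, so (B2a) says nothing about $\dual{S_{\omega_0}''(\phi_{\omega_0})w}{w}$ directly. The compactness argument you sketch upgrades pointwise positivity to a uniform bound, but it does not supply the pointwise positivity in the first place. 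The paper closes this gap by choosing $\psi$ differently: instead of an explicit linear combination, $\psi$ is taken to be the minimizer of $\dual{S_{\omega_0}''(\phi_{\omega_0})w}{w}$ over $\|w\|_H=1$, $(\phi_{\omega_0},w)_H=0$ (Lemma~\ref{lem8}), which yields an Euler--Lagrange equation $S_{\omega_0}''(\phi_{\omega_0})\psi=\lambda I\psi+\mu I\phi_{\omega_0}$ with $\lambda<0$. This equation is precisely what makes the cross term $\dual{S_{\omega_0}''(\phi_{\omega_0})\psi}{w}$ vanish for $w\in W$, allowing the two-dimensional contradiction argument of Lemma~\ref{lem7} against (B2a). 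Your $\psi=s\chi_{\omega_0}+t\phi_{\omega_0}'$ satisfies $S_{\omega_0}''(\phi_{\omega_0})\psi=s\lambda_{\omega_0}I\chi_{\omega_0}+tI\phi_{\omega_0}$, which is of the required form only if $\phi_{\omega_0}'\in\mathrm{span}\{\chi_{\omega_0},\phi_{\omega_0}\}$---generically false---so Lemma~\ref{lem7} does not apply and your (A3) step is unjustified.
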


\begin{corollary}\label{cor4}
Assume $({\rm B1})$ and that for each $\omega\in \Omega$, 
$({\rm B2b})$ and $({\rm B3})$ hold. 
If $\omega_0\in \Omega$ satisfies $d''(\omega_0)>0$, 
then the bound state $\Trans (\omega_0 t)\phi_{\omega_0}$ is unstable. 
\end{corollary}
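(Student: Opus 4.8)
The plan is to obtain Corollary \ref{cor4} as a consequence of Theorem \ref{thm1}, by checking that the hypotheses (A1), (A2a) and (A3) hold at $\omega=\omega_0$. Throughout write $L:=S_{\omega_0}''(\phi_{\omega_0})$ and $Y:=\{w\in X:(\phi_{\omega_0},w)_H=(J\phi_{\omega_0},w)_H=0\}$. Condition (A1) is contained in (B1), so the real work is to produce the negative direction $\psi$ demanded by (A2a) and to deduce the coercivity estimate (A3); once these are in place, Theorem \ref{thm1} yields the instability of $\Trans(\omega_0 t)\phi_{\omega_0}$ at once.

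First I would record the usual identities relating $d$ and $L$. Differentiating $S_\omega'(\phi_\omega)=0$ in $\omega$ and using $Q'(\phi_\omega)=I\phi_\omega$ gives $L\phi_{\omega_0}'=I\phi_{\omega_0}$, while from $d(\omega)=S_\omega(\phi_\omega)$ and $S_\omega'(\phi_\omega)=0$ one gets $d'(\omega)=-Q(\phi_\omega)$, hence $d''(\omega_0)=-(\phi_{\omega_0},\phi_{\omega_0}')_H=-\dual{L\phi_{\omega_0}'}{\phi_{\omega_0}'}$. Thus $d''(\omega_0)>0$ says precisely that $\phi_{\omega_0}'$ is a strictly negative direction of $L$, and the same computation shows $(\phi_{\omega_0},\phi_{\omega_0}')_H=-d''(\omega_0)\ne0$, so $\phi_{\omega_0}'\notin Y$.

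The core of the argument is a count of the negative directions of $L$. From (B2b), together with $(\chi_{0,\omega_0},\chi_{1,\omega_0})_H=0$ and the eigenrelations $L\chi_{j,\omega_0}=\lambda_{j,\omega_0}I\chi_{j,\omega_0}$ (which make the cross term $\dual{L\chi_{0,\omega_0}}{\chi_{1,\omega_0}}=\lambda_{0,\omega_0}(\chi_{0,\omega_0},\chi_{1,\omega_0})_H$ vanish), $L$ is negative definite on the two‑dimensional space $\mathrm{span}\{\chi_{0,\omega_0},\chi_{1,\omega_0}\}$ and positive definite on the codimension‑three subspace appearing in (B2b); since $J\phi_{\omega_0}\in\ker L\setminus\{0\}$ by Remark \ref{rem1} and the bijectivity of $J$, it follows that $L$ has exactly two negative directions and $\ker L=\mathrm{span}\{J\phi_{\omega_0}\}$. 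I would then invoke the lemma, recalled in Section \ref{sect:pre} from \cite{GSS1}, on how the negative index of a quadratic form drops under a single linear constraint: restricting $L$ to $\{(\phi_{\omega_0},\cdot)_H=0\}$ lowers the negative index from $2$ to $1$ exactly because $I\phi_{\omega_0}=L\phi_{\omega_0}'$ lies in the range of $L$ with $\dual{L\phi_{\omega_0}'}{\phi_{\omega_0}'}=-d''(\omega_0)<0$, and imposing in addition $(J\phi_{\omega_0},\cdot)_H=0$ only discards the kernel direction $J\phi_{\omega_0}$ (note $(\phi_{\omega_0},J\phi_{\omega_0})_H=0$ by skew‑symmetry), leaving the negative index equal to $1$ on $Y$. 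Hence one may choose $\psi\in Y$ with $\|\psi\|_H=1$ and $\dual{L\psi}{\psi}<0$, which is (A2a); and by the standard ``one negative direction'' lemma (Section \ref{sect:pre}, cf. \cite{GSS1}), since $L$ has exactly one negative direction on $Y$ and is non‑degenerate there, it is positive definite on $Y\cap\{w:(\psi,w)_H=0\}$.

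It remains to promote this positive definiteness to the uniform bound (A3), which is the purpose of (B3) and is routine: if $\dual{Lw}{w}\ge k_0\|w\|_X^2$ failed on $Y\cap\{w:(\psi,w)_H=0\}$ for every $k_0>0$, a minimizing sequence $(w_n)$ with $\|w_n\|_X=1$ would have a subsequence converging weakly in $X$ to some $w_\infty$; if $w_\infty=0$, the last clause of (B3) forces $\liminf\dual{Lw_n}{w_n}>0$, a contradiction, and if $w_\infty\ne0$, then $w_\infty$ still lies in $Y\cap\{w:(\psi,w)_H=0\}$ since each constraint is pairing with a fixed element of $X^*$ and hence weakly continuous on $X$, while the weak lower semicontinuity in (B3) gives $\dual{Lw_\infty}{w_\infty}\le0$, contradicting positive definiteness. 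Thus (A1), (A2a) and (A3) hold and Theorem \ref{thm1} applies. I expect the main obstacle to be the negative‑index count of the third paragraph — showing that $d''(\omega_0)>0$ together with the two negative directions from (B2b) leaves exactly one negative direction of $L$ on $Y$; this is where the borderline sign hypothesis is genuinely used and where the constrained‑index lemma of \cite{GSS1} does the real work. (Corollary \ref{cor3} fits the same scheme, with $d''(\omega_0)<0$ so that the single negative direction of $L$ survives the constraint.)
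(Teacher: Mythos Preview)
Your overall strategy---reduce to Theorem~\ref{thm1} by verifying (A1), (A2a), (A3)---matches the paper, but you take a circuitous route and in doing so introduce a genuine gap. The paper's proof is essentially one line: it sets $\psi:=\chi_{1,\omega_0}$, the second negative eigenvector already supplied by (B2b). Then (A2a) is immediate, since $(\chi_{1,\omega_0},\phi_{\omega_0})_H=0$ is part of (B2b), $(\chi_{1,\omega_0},J\phi_{\omega_0})_H=0$ follows from $L\chi_{1,\omega_0}=\lambda_{1,\omega_0}I\chi_{1,\omega_0}$ together with $LJ\phi_{\omega_0}=0$, and $\dual{L\chi_{1,\omega_0}}{\chi_{1,\omega_0}}=\lambda_{1,\omega_0}<0$. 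For (A3) the paper proves Lemma~\ref{lem9} (in Section~\ref{sect:proofcor}, not Section~\ref{sect:pre}): a short Cauchy--Schwarz computation, decomposing $w$ and $\phi_{\omega_0}'$ along $\chi_{0,\omega_0},\chi_{1,\omega_0},J\phi_{\omega_0}$ and using $d''(\omega_0)>0$, gives $\dual{Lw}{w}>0$ directly; the upgrade to a uniform lower bound then uses (B3), as you describe. No constrained-index lemma is invoked anywhere, and the lemmas you cite from Section~\ref{sect:pre} are not there (that section contains only Lemmas~\ref{lem1} and~\ref{lem2}).

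The gap in your argument is the step from ``$L$ has exactly one negative direction on $Y$'' and ``$\dual{L\psi}{\psi}<0$'' to ``$L$ is positive definite on $Y\cap\{(\psi,\cdot)_H=0\}$''. This fails for an arbitrary such $\psi$: already in two dimensions with $L=\mathrm{diag}(-1,\tfrac{1}{10})$ and $\psi=(1,3)$ one has $\dual{L\psi}{\psi}=-\tfrac{1}{10}<0$, yet $\psi^\perp$ is spanned by $(3,-1)$, on which $\dual{Lw}{w}=-9+\tfrac{1}{10}<0$. The conclusion is only guaranteed when $\psi$ is chosen as the minimizer of the constrained Rayleigh quotient on $Y$ (so that $L\psi-\lambda I\psi\in\mathrm{span}\{I\phi_{\omega_0},IJ\phi_{\omega_0}\}$ and hence $\dual{L\psi}{w}=0$ for $w\in Y\cap\psi^\perp$), and you neither make this specific choice nor argue that such a minimizer exists. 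The cleanest repair is simply to take $\psi=\chi_{1,\omega_0}$ as the paper does; this makes your entire index-counting paragraph unnecessary, and Lemma~\ref{lem9} is then exactly (A3) for that $\psi$.
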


\begin{remark}\label{rem3}
Under the assumptions (B1), (B2a) and (B3), 
it is proved that if $\omega_0\in \Omega$ satisfies $d''(\omega_0)>0$, 
then the bound state $\Trans (\omega_0 t)\phi_{\omega_0}$ is stable 
(see Section 3 of \cite{GSS1}). 
\end{remark}

\begin{remark}\label{rem4}
When $S_{\omega}''(\phi_{\omega})$ has two or more negative eigenvalues, 
linear instability of $\Trans (\omega t)\phi_{\omega}$ is studied 
by many authors (see, e.g., \cite{ES,gri,GSS2,jon,KKSW}). 
However, it is a non-trivial problem 
whether linear instability implies (nonlinear) instability. 
For a recent development in this direction, see \cite{GO}. 
Corollary \ref{cor4} gives a sufficient condition for instability of bound states 
without using the argument through linear instability 
(see also Subsection \ref{ss:delta}). 
This was the main assertion in \cite{mae2}. 
\end{remark}

\section{Preliminaries}\label{sect:pre}

In this section we assume (A1). 
Recall that $\Trans$ is $2\pi$-periodic. We often use the relations 
$R\Trans (s)=\tilde \Trans (s)R$, $RJ=\tilde JR$, 
$I\Trans (s)=\tilde \Trans (s)I$, $IJ=\tilde JI$, 
which follow from the definitions of $R$, $I$, $\tilde \Trans(s)$ and $\tilde J$ 
in Section \ref{sect:form}. 

\begin{lemma}\label{lem1}
There exist $\varepsilon>0$ and a $C^2$ map 
$\theta:\mathcal{N}_{\varepsilon}(\phi_{\omega})\to \R/2\pi \Z$ such that 
for all $u\in \mathcal{N}_{\varepsilon}(\phi_{\omega})$ and all $s \in \R/2\pi \Z$, 
\begin{align}
&\|\Trans (\theta (u)) u-\phi_{\omega}\|_X\le \|\Trans (s)u-\phi_{\omega}\|_X, \nonumber \\ 
&(\Trans (\theta (u)) u, J\phi_{\omega})_X=0, \quad 
\theta (\Trans (s)u)=\theta (u)-s, \nonumber \\ 
&\theta'(u)=\frac{R\Trans (-\theta(u))J\phi_{\omega}}
{(J^2\phi_{\omega},\Trans (\theta (u))u)_X}\in I(X). \label{eq:4.1}
\end{align}
\end{lemma}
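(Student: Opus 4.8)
The plan is to realize Lemma~\ref{lem1} as the standard modulation (tubular neighbourhood) construction around the circle $\{\Trans(s)\phi_\omega : s\in\R/2\pi\Z\}$, as in \cite{GSS1}. Introduce $F:\R\times X\to\R$ by $F(s,u)=(\Trans(s)u,J\phi_\omega)_X$. Since $J\in\BLO(X)$, the group $\Trans(s)=e^{sJ}$ depends analytically on $s$ with $\frac{d}{ds}\Trans(s)=J\Trans(s)$, so $F$ is $C^\infty$; by the skew-symmetry \eqref{eq:2.1} one has $F(0,\phi_\omega)=(\phi_\omega,J\phi_\omega)_X=0$ and $\partial_sF(0,\phi_\omega)=(J\phi_\omega,J\phi_\omega)_X=\|J\phi_\omega\|_X^2$, which is positive because $J$ is bijective and $\phi_\omega\ne0$. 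First I would apply the implicit function theorem to obtain $\delta>0$ and a $C^\infty$ map $\sigma$ on the open ball $B_\delta(\phi_\omega)\subset X$ with $\sigma(\phi_\omega)=0$ and $F(\sigma(u),u)=0$; shrinking $\delta$ so that $\partial_sF>0$ throughout the relevant region, $\sigma(u)$ is the unique zero of $F(\cdot,u)$ in a fixed neighbourhood of $0$.

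Next I would extend $\sigma$ to $\mathcal{N}_\varepsilon(\phi_\omega)$ by $\Trans$-equivariance. For $\varepsilon\le\delta$ and $u\in\mathcal{N}_\varepsilon(\phi_\omega)$, choose $r\in\R$ with $\Trans(-r)u\in B_\varepsilon(\phi_\omega)$ and set $\theta(u):=\sigma(\Trans(-r)u)-r$ in $\R/2\pi\Z$. Then $\Trans(\theta(u))u=\Trans(\sigma(\Trans(-r)u))\,\Trans(-r)u$, so $(\Trans(\theta(u))u,J\phi_\omega)_X=F(\sigma(\Trans(-r)u),\Trans(-r)u)=0$, and taking $r+s$ in place of $r$ at the point $\Trans(s)u$ gives $\theta(\Trans(s)u)=\theta(u)-s$. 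That $\theta(u)$ does not depend on the choice of $r$ and that it realizes $\min_{s}\|\Trans(s)u-\phi_\omega\|_X$ both follow, for $\varepsilon$ small enough, from the fact that the critical points of $s\mapsto\|\Trans(s)u-\phi_\omega\|_X^2$ coincide with the zeros of $F(\cdot,u)$ (again by \eqref{eq:2.1}), together with the local uniqueness from the implicit function theorem and a compactness argument on $\R/2\pi\Z$. Being a composition of $C^\infty$ maps, $\theta$ is $C^2$ (indeed $C^\infty$).

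For the derivative formula \eqref{eq:4.1}, I would differentiate the identity $(\Trans(\theta(u))u,J\phi_\omega)_X=0$ in $u$ along $h\in X$. Using $\frac{d}{ds}\Trans(s)=J\Trans(s)$ and \eqref{eq:2.1}, $\partial_s(\Trans(s)u,J\phi_\omega)_X$ evaluated at $s=\theta(u)$ equals $-(J^2\phi_\omega,\Trans(\theta(u))u)_X$, which equals $\|J\phi_\omega\|_X^2\ne0$ at $u=\phi_\omega$ and hence is nonzero on $\mathcal{N}_\varepsilon(\phi_\omega)$ by continuity (consistently with the hypothesis of the implicit function theorem); the chain rule then gives
$$\dual{\theta'(u)}{h}=\frac{(\Trans(\theta(u))h,J\phi_\omega)_X}{(J^2\phi_\omega,\Trans(\theta(u))u)_X}.$$
Since $\Trans$ is unitary on $X$ and $\dual{Rw}{h}=(w,h)_X$, we rewrite the numerator as $(\Trans(\theta(u))h,J\phi_\omega)_X=(h,\Trans(-\theta(u))J\phi_\omega)_X=\dual{R\Trans(-\theta(u))J\phi_\omega}{h}$, which yields \eqref{eq:4.1}. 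Finally, $\theta'(u)\in I(X)$ follows from (A1): writing $R\phi_\omega=Ig$ with $g\in X$ and using the commutation relations $RJ=\tilde JR$, $IJ=\tilde JI$, $R\Trans(s)=\tilde\Trans(s)R$, $I\Trans(s)=\tilde\Trans(s)I$ recalled at the start of this section, one obtains $R\Trans(-\theta(u))J\phi_\omega=I\big(\Trans(-\theta(u))Jg\big)\in I(X)$, and dividing by the scalar denominator preserves membership in $I(X)$.

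The one step that is not pure bookkeeping is the choice of $\varepsilon$ in the second paragraph: a priori there might be $s_1\ne s_2$ in $\R/2\pi\Z$ with both $\Trans(s_1)u$ and $\Trans(s_2)u$ lying in $B_\varepsilon(\phi_\omega)$, and one must exclude such a spurious second branch in order to make $\theta$ single-valued and equal to the minimizer. This is where the $2\pi$-periodicity of $\Trans$ and the positivity of the orbit speed $\|J\phi_\omega\|_X$ (which makes $s\mapsto\Trans(s)\phi_\omega$ a non-degenerate closed curve) are genuinely used. The remaining ingredients—smoothness of $F$, the implicit function theorem, the equivariant extension, and the implicit differentiation—are routine.
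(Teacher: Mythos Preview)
Your proposal is correct and follows exactly the approach the paper intends: the paper's own proof consists solely of the reference ``See Lemma 3.2 of \cite{GSS1}'' together with the remark that $\theta'(u)\in I(X)$ follows from $R\phi_\omega\in I(X)$ in (A1). You have simply written out the GSS modulation construction (implicit function theorem applied to $F(s,u)=(\Trans(s)u,J\phi_\omega)_X$, equivariant extension, implicit differentiation for \eqref{eq:4.1}) and supplied the commutation-relation argument for $\theta'(u)\in I(X)$ that the paper alludes to; nothing differs in spirit or in detail.
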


\begin{proof}
See Lemma 3.2 of \cite{GSS1}. We remark that $\theta'(u)\in I(X)$ follows from 
the assumption $R\phi_{\omega}\in I(X)$ in (A1). 
\end{proof}

For $u\in \mathcal{N}_{\varepsilon}(\phi_{\omega})$, 
we define $M(u)=\Trans (\theta (u))u$, and 
\begin{equation}\label{eq:4.2}
A(u)=(M(u),J^{-1}\psi)_H, \quad \Lambda (u)=(M(u),\psi)_H. 
\end{equation}
Then we have 
$$\dual{A'(u)}{v}=(\Trans (\theta(u))v,J^{-1}\psi)_H-\Lambda (u)\dual{\theta'(u)}{v}$$
for $v\in X$. By Lemma \ref{lem1}, we see that $A'(u)\in I(X)$ and 
\begin{equation}\label{eq:4.3}
JI^{-1}A'(u)=\Trans (-\theta (u))\psi-\Lambda (u)JI^{-1}\theta'(u)
\end{equation}
for $u\in \mathcal{N}_{\varepsilon}(\phi_{\omega})$. 
Moreover, since $A$ is invariant under $\Trans$, we have 
\begin{equation}\label{eq:4.4}
0=\frac{d}{ds}A(\Trans(s)u)|_{s=0}
=\dual{A'(u)}{Ju}=-\dual{Q'(u)}{JI^{-1}A'(u)}.
\end{equation}
We define $P$ by 
$$P(u)=\dual{E'(u)}{JI^{-1}A'(u)}$$
for $u\in \mathcal{N}_{\varepsilon}(\phi_{\omega})$. 
By \eqref{eq:4.4}, we have 
$P(u)=\dual{S_{\omega}'(u)}{JI^{-1}A'(u)}$. 
Moreover, by \eqref{eq:4.1}, \eqref{eq:4.3} 
and by \eqref{eq:2.2}, \eqref{eq:2.4}, \eqref{eq:2.5}, we see that 
\begin{equation}\label{eq:4.5}
P(u)=\dual{S_{\omega}'(M(u))}{\psi}-\Lambda (u)
\frac{\dual{S_{\omega}'(M(u))}{JI^{-1}RJ\phi_{\omega}}}{(M(u),J^2\phi_{\omega})_{X}}. 
\end{equation}

\begin{lemma}\label{lem2}
Let $\mathcal{I}$ be an interval of $\R$. 
Let $u\in C(\mathcal{I},X)\cap C^1(\mathcal{I},X^*)$ be a solution of \eqref{eq:2.3}, 
and assume that $u(t)\in \mathcal{N}_{\varepsilon}(\phi_{\omega})$ 
for all $t\in \mathcal{I}$. Then 
$$\frac{d}{dt}A(u(t))=-P(u(t))$$
for all $t\in \mathcal{I}$. 
\end{lemma}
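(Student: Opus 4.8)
The plan is to differentiate $t\mapsto A(u(t))$ directly, using the chain rule in the dual pairing and the equation \eqref{eq:2.3}. Since $u\in C(\mathcal I,X)\cap C^1(\mathcal I,X^*)$ and $A$ is $C^1$ on $\mathcal N_\varepsilon(\phi_\omega)$ with $A'(u)\in I(X)$, the composition $A(u(\cdot))$ is differentiable on $\mathcal I$ with
$$\frac{d}{dt}A(u(t))=\dual{A'(u(t))}{\dot u(t)}=\dual{A'(u(t))}{\tilde J E'(u(t))}.$$
First I would justify this chain rule carefully: the subtle point is that $\dot u(t)$ lives only in $X^*$, so one cannot naively write $\dual{A'(u)}{\dot u}$ as a limit of difference quotients in $X$. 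The resolution is exactly that $A'(u(t))\in I(X)\subset H^*$, which was established in Section~\ref{sect:pre} right after \eqref{eq:4.2}; because $A'(u(t))$ is represented by an element of $H$ (hence of $X$ after applying $I^{-1}$, but more to the point it pairs continuously against $X^*$ through the Hilbert structure), the pairing $\dual{A'(u(t))}{\cdot}$ extends to a bounded functional on $X^*$, and differentiability of $u$ in $X^*$ together with $C^1$-regularity of $A$ in $X$ suffices. This is the same mechanism that makes $Q$ differentiable along solutions in \cite{GSS1}, so I would cite that or reproduce the short argument.

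Next I would rewrite the right-hand side. By definition of $\tilde J$,
$$\dual{A'(u(t))}{\tilde J E'(u(t))}=-\dual{\tilde J A'(u(t))}{\ \cdot\ }$$
is not quite the right move; instead I use that $A'(u)\in I(X)$, so $A'(u)=I(I^{-1}A'(u))$ with $I^{-1}A'(u)\in X$, and the relation $\tilde J I = I J$ gives $\tilde J E'(u)$ paired against $A'(u)$ equals, after moving $\tilde J$ onto the other factor via its definition, $-\dual{E'(u)}{J I^{-1}A'(u)}$. Concretely: $\dual{A'(u)}{\tilde J E'(u)}=\dual{\tilde J E'(u)}{I^{-1}A'(u)}^{\text{(as }H\text{-pairing)}}$ — I would be careful to track which pairing is which, but the upshot is the identity
$$\dual{A'(u(t))}{\tilde J E'(u(t))}=-\dual{E'(u(t))}{J I^{-1}A'(u(t))}=-P(u(t)),$$
the last equality being the definition of $P$ given just before Lemma~\ref{lem2}. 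A clean way to organize this: start from $\dual{A'(u)}{\tilde J E'(u)}=-\dual{E'(u)}{J \cdot}$ applied appropriately, or simply differentiate $A(\Trans(s)u)$ as in \eqref{eq:4.4} but replacing the generator $Ju$ by the actual velocity field $\tilde J E'(u)$.

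The main obstacle I anticipate is purely the bookkeeping of the three pairings — the $X$–$X^*$ pairing $\dual{\cdot}{\cdot}$, the $H$-inner product $(\cdot,\cdot)_H$, and the identifications $R$, $I$, $\tilde J$, $\tilde\Trans(s)$ — and making sure the sign is correct: skew-symmetry of $J$ contributes one sign, the definition $\dual{\tilde J f}{u}=-\dual{f}{Ju}$ contributes another, and they must combine to produce the single minus sign in $\frac{d}{dt}A(u(t))=-P(u(t))$. There is no analytic difficulty beyond the chain-rule justification above; once the pairings are aligned, the identity is immediate. I would therefore keep the proof to a few lines: invoke differentiability of $A\circ u$ via $A'(u)\in I(X)$, apply \eqref{eq:2.3}, use the definition of $\tilde J$ together with $A'(u)\in I(X)$ to pass to $-\dual{E'(u)}{JI^{-1}A'(u)}$, and recognize this as $-P(u)$.
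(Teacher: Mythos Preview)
Your approach is correct and essentially identical to the paper's: the paper cites Lemma~4.6 of \cite{GSS1} to obtain the chain rule $\frac{d}{dt}A(u(t))=\dual{\partial_t u(t)}{I^{-1}A'(u(t))}$ (which is precisely your extension argument using $A'(u)\in I(X)$), substitutes $\partial_t u=\tilde J E'(u)$, and then applies the definition of $\tilde J$ to reach $-\dual{E'(u)}{JI^{-1}A'(u)}=-P(u)$. Your only slight notational looseness is writing $\dual{A'(u)}{\dot u}$ before resolving the pairing; the paper avoids this by writing $\dual{\partial_t u}{I^{-1}A'(u)}$ from the outset, which you might adopt for clarity.
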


\begin{proof}
By Lemma 4.6 of \cite{GSS1}, we see that $t\mapsto A(u(t))$ 
is a $C^1$ function on $\mathcal{I}$, and 
$$\frac{d}{dt}A(u(t))
=\dual{\partial_t u(t)}{I^{-1}A'(u(t))}$$
for all $t\in \mathcal{I}$. 
Since $u(t)$ is a solution of \eqref{eq:2.3}, we have 
\begin{align*}
&\dual{\partial_t u(t)}{I^{-1}A'(u(t))}=\dual{\tilde J E'(u(t))}{I^{-1}A'(u(t))} \\
&=-\dual{E'(u(t))}{JI^{-1}A'(u(t))}=-P(u(t))
\end{align*}
for $t\in \mathcal{I}$. This completes the proof. 
\end{proof}

\section{Proof of Theorem \ref{thm1}} \label{sect:proofthm1}

In this section we make the same assumptions as in Theorem \ref{thm1}. We define 
\begin{equation}\label{eq:5.1}
W=\{w\in X: (\phi_{\omega},w)_H=(J\phi_{\omega},w)_H=(\psi,w)_H=0\}.
\end{equation}

\begin{lemma}\label{lem3}
There exists $\varepsilon_0>0$ such that 
$$E(u)\ge E(\phi_{\omega})+\Lambda (u) P(u)$$
for all $u\in \mathcal{N}_{\varepsilon_0}(\phi_{\omega})$ 
satisfying $Q(u)=Q(\phi_{\omega})$. 
\end{lemma}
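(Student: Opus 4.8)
\section*{Proof proposal for Lemma \ref{lem3}}

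The plan is to perform a Taylor expansion of $E$ around $\phi_\omega$ along the path $u \mapsto M(u) = \Trans(\theta(u))u$, exploiting the $\Trans$-invariance of $E$ to reduce everything to a neighbourhood where the orthogonality condition $(M(u), J\phi_\omega)_X = 0$ from Lemma \ref{lem1} holds. Since $E(u) = E(M(u))$ and $Q(u) = Q(M(u))$, it suffices to prove the inequality with $u$ replaced by $v := M(u)$ satisfying $(v, J\phi_\omega)_X = 0$ and $Q(v) = Q(\phi_\omega)$, and then note $\Lambda(u) = (v,\psi)_H$, $P(u) = \dual{S_\omega'(v)}{\psi} - (v,\psi)_H \cdot (\text{correction term})$ by \eqref{eq:4.5}. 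First I would decompose $v - \phi_\omega$ along the mutually $H$-orthogonal directions picked out by the constraints: a component proportional to $\psi$, a component in the span of $\phi_\omega$ (which is controlled to second order by the charge constraint $Q(v) = Q(\phi_\omega)$), a component along $J\phi_\omega$ (killed to second order by $(v,J\phi_\omega)_X = 0$ together with $(J\phi_\omega, \phi_\omega)_H = 0$), and a remainder $w$ lying in the space $W$ defined in \eqref{eq:5.1}.

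Next I would expand $S_\omega(v) = E(v) - \omega Q(v)$ to second order: since $S_\omega'(\phi_\omega) = 0$ and $Q(v) = Q(\phi_\omega)$, one gets
\begin{equation*}
E(v) = E(\phi_\omega) + \tfrac{1}{2}\dual{S_\omega''(\phi_\omega)(v-\phi_\omega)}{v-\phi_\omega} + o(\|v-\phi_\omega\|_X^2).
\end{equation*}
In the quadratic form, the $\phi_\omega$-component contributes at order $o(\|v-\phi_\omega\|_X^2)$ because $Q(v)=Q(\phi_\omega)$ forces that component to be quadratically small, the $J\phi_\omega$-component drops out since $S_\omega''(\phi_\omega)(J\phi_\omega) = 0$ (Remark \ref{rem1}), and the cross terms between $\psi$, $w$ and the small components are likewise negligible. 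What survives is
\begin{equation*}
\tfrac{1}{2}\dual{S_\omega''(\phi_\omega)(a\psi + w)}{a\psi + w} = \tfrac{a^2}{2}\dual{S_\omega''(\phi_\omega)\psi}{\psi} + a\dual{S_\omega''(\phi_\omega)\psi}{w} + \tfrac{1}{2}\dual{S_\omega''(\phi_\omega)w}{w},
\end{equation*}
where $a = (v - \phi_\omega, \psi)_H = \Lambda(u)$ up to quadratic corrections. By (A3), the last term is $\ge k_0\|w\|_X^2$, which absorbs the cross term $a\dual{S_\omega''(\phi_\omega)\psi}{w}$ via Young's inequality at the cost of a term $-Ca^2$. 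Meanwhile, the first term is $\frac{a^2}{2}\dual{S_\omega''(\phi_\omega)\psi}{\psi}$, a bounded quantity times $a^2$. So altogether $E(v) - E(\phi_\omega) \ge \frac{a^2}{2}\dual{S_\omega''(\phi_\omega)\psi}{\psi} - Ca^2 + \tfrac{k_0}{2}\|w\|_X^2 + o(\|v-\phi_\omega\|_X^2)$, and I must compare the right side to $\Lambda(u)P(u)$.

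The key identification is that $\Lambda(u)P(u)$ is, to leading order, also $O(a^2)$ (indeed $\Lambda(u) = a + O(\|v-\phi_\omega\|_X^2)$ and $P(u) = \dual{S_\omega'(v)}{\psi} + O(a\|v-\phi_\omega\|_X) = \dual{S_\omega''(\phi_\omega)(v-\phi_\omega)}{\psi} + o(\|v-\phi_\omega\|_X)$, and the latter, using the decomposition of $v-\phi_\omega$ and the fact that $\psi \perp_H \phi_\omega$, is $a\dual{S_\omega''(\phi_\omega)\psi}{\psi} + \dual{S_\omega''(\phi_\omega)\psi}{w} + o(\cdot)$). Hence $\Lambda(u)P(u) = a^2\dual{S_\omega''(\phi_\omega)\psi}{\psi} + a\dual{S_\omega''(\phi_\omega)\psi}{w} + o(\|v-\phi_\omega\|_X^2)$. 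Subtracting, the desired inequality $E(v) - E(\phi_\omega) - \Lambda(u)P(u) \ge 0$ reduces to showing
\begin{equation*}
-\tfrac{a^2}{2}\dual{S_\omega''(\phi_\omega)\psi}{\psi} - a\dual{S_\omega''(\phi_\omega)\psi}{w} + \tfrac{1}{2}\dual{S_\omega''(\phi_\omega)w}{w} + o(\|v-\phi_\omega\|_X^2) \ge 0,
\end{equation*}
and since $-\dual{S_\omega''(\phi_\omega)\psi}{\psi} > 0$ by (A2a) while the $w$-quadratic term is coercive by (A3), both of the genuinely quadratic contributions are nonnegative and dominate the indefinite cross term by Young's inequality — provided the error terms are genuinely $o(\|v-\phi_\omega\|_X^2)$ and, crucially, that $\|v-\phi_\omega\|_X^2$ itself is comparable to $a^2 + \|w\|_X^2$ (so that the $o(\cdot)$ can be absorbed). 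I expect the main obstacle to be precisely this bookkeeping: controlling the $\phi_\omega$- and $J\phi_\omega$-components quadratically in terms of $\|v - \phi_\omega\|_X$ (using $Q(v) = Q(\phi_\omega)$, $(v,J\phi_\omega)_X = 0$, and $(J\phi_\omega,\phi_\omega)_H = 0$), verifying that $\Lambda(u)$, $P(u)$ have the claimed expansions with errors that are genuinely quadratic, and checking that all of this is uniform for $v$ in a sufficiently small $\Trans$-invariant neighbourhood — i.e., choosing $\varepsilon_0$ small enough at the end. The algebraic heart (Young's inequality against (A2a) and (A3)) is easy; making the Taylor remainders and the constraint-induced smallness rigorous and uniform is the work.
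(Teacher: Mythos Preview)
Your approach is essentially the same as the paper's: decompose $M(u)-\phi_\omega$ along $\phi_\omega$, $J\phi_\omega$, $\psi$, and $W$, Taylor expand $S_\omega$ and $P$ to second order, subtract, and use (A2a) and (A3) to get coercivity that absorbs the $o(\|v-\phi_\omega\|_X^2)$ remainder via the comparability $\|v-\phi_\omega\|_X\lesssim |a|+\|w\|_X$.

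Two minor slips worth correcting. First, the $J\phi_\omega$-component is \emph{not} second order: from $(M(u),J\phi_\omega)_X=0$ and $(\phi_\omega,J\phi_\omega)_X=0$ you only get that the coefficient $b$ satisfies $|b|\,\|J\phi_\omega\|_X^2 = |a(\psi,J\phi_\omega)_X + (w,J\phi_\omega)_X|$, so $|b|\lesssim |a|+\|w\|_X$ --- first order, but controlled by $a$ and $w$, which is exactly what you need for the comparability $\|v-\phi_\omega\|_X\lesssim |a|+\|w\|_X$. (In the quadratic form itself $b$ is harmless anyway since $S_\omega''(\phi_\omega)J\phi_\omega=0$, as you note.) Second, if you do the subtraction carefully the cross term $a\dual{S_\omega''(\phi_\omega)\psi}{w}$ cancels exactly: it appears with coefficient $1$ in $\frac{1}{2}\dual{S_\omega''(v-\phi_\omega)}{v-\phi_\omega}$ and with coefficient $1$ in $\Lambda(u)P(u)$, so the difference is simply
\[
E(u)-E(\phi_\omega)-\Lambda(u)P(u)
= -\tfrac{a^2}{2}\dual{S_\omega''(\phi_\omega)\psi}{\psi}
+\tfrac{1}{2}\dual{S_\omega''(\phi_\omega)w}{w}+o(\|v-\phi_\omega\|_X^2),
\]
which is the paper's \eqref{eq:5.6}. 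No Young's inequality is needed; both surviving terms are already nonnegative by (A2a) and (A3).
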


\begin{proof}
We put $v=M(u)-\phi_{\omega}$, and decompose $v$ as 
$$v=a\phi_{\omega}+bJ\phi_{\omega}+c\psi+w,$$
where $a$, $b$, $c\in \R$ and $w\in W$. 
Note that $\|v\|_{X}<\varepsilon_0$. Since 
$$Q(\phi_{\omega})=Q(u)=Q(M(u))=Q(\phi_{\omega})+(\phi_{\omega},v)_{H}+Q(v),$$ 
we have $(\phi_{\omega},v)_{H}=a\|\phi_{\omega}\|_{H}^2=-Q(v)$. 
In particular, $a=O(\|v\|_X^2)$. 
Moreover, by \eqref{eq:2.1} and Lemma \ref{lem1}, 
we have $(\phi_{\omega},J\phi_{\omega})_X=(M(u),J\phi_{\omega})_X=0$. Thus, 
$$0=(v,J\phi_{\omega})_X=b\|J\phi_{\omega}\|_X^2+(c\psi+w,J\phi_{\omega})_X,$$
$\|bJ\phi_{\omega}\|_X\le \|c\psi\|_X+\|w\|_X$, and 
\begin{equation}\label{eq:5.2}
2|c|\|\psi\|_X+2\|w\|_X\ge \|v\|_X-O(\|v\|_{X}^2).
\end{equation}
Since $S_{\omega}'(\phi_{\omega})=0$ and $Q(u)=Q(\phi_{\omega})$, 
by the Taylor expansion, we have 
\begin{equation}\label{eq:5.3}
E(u)-E(\phi_{\omega})=S_{\omega}(M(u))-S_{\omega}(\phi_{\omega}) 
=\frac{1}{2}\dual{S_{\omega}''(\phi_{\omega})v}{v}+o(\|v\|_{X}^2). 
\end{equation}
Here, since $a=O(\|v\|_X^2)$ and $S_{\omega}''(\phi_{\omega})(J\phi_{\omega})=0$, 
we have 
\begin{align}
&\dual{S_{\omega}''(\phi_{\omega})v}{v}
=\dual{S_{\omega}''(\phi_{\omega})(c\psi+w)}{c\psi+w}
+o(\|v\|_{X}^2) \nonumber \\
&=c^2\dual{S_{\omega}''(\phi_{\omega})\psi}{\psi}
+2c \dual{S_{\omega}''(\phi_{\omega})\psi}{w}
+\dual{S_{\omega}''(\phi_{\omega})w}{w}+o(\|v\|_X^2). \label{eq:5.4}
\end{align}
On the other hand, we have $c=(v,\psi)_H=\Lambda (u)=O(\|v\|_X)$ and 
\begin{align*}
&S_{\omega}'(\phi_{\omega}+v)
=S_{\omega}'(\phi_{\omega})+S_{\omega}''(\phi_{\omega})v+o(\|v\|_X)
=S_{\omega}''(\phi_{\omega})v+o(\|v\|_X), \\
&(M(u),J^2\phi_{\omega})_X=(\phi_{\omega}+v,J^2\phi_{\omega})_X
=-\|J\phi_{\omega}\|_X^2+O(\|v\|_X).
\end{align*}
Thus, by \eqref{eq:4.5}, we have
\begin{align}
\Lambda (u) P(u)
&=c\dual{S_{\omega}''(\phi_{\omega})v}{\psi}+o(\|v\|_X^2) \nonumber \\
&=c^2\dual{S_{\omega}''(\phi_{\omega})\psi}{\psi}
+c\dual{S_{\omega}''(\phi_{\omega})\psi}{w}+o(\|v\|_X^2). \label{eq:5.5}
\end{align}
By \eqref{eq:5.3}, \eqref{eq:5.4} and \eqref{eq:5.5}, we have 
\begin{align}
&E(u)-E(\phi_{\omega})-\Lambda (u) P(u) \nonumber \\
&=-\frac{c^2}{2}\dual{S_{\omega}''(\phi_{\omega})\psi}{\psi}
+\frac{1}{2}\dual{S_{\omega}''(\phi_{\omega})w}{w}
+o(\|v\|_X^2). \label{eq:5.6}
\end{align}
Here, by the assumptions (A2a) and (A3), 
there exists a positive constant $k>0$ such that 
$$-\frac{c^2}{2}\dual{S_{\omega}''(\phi_{\omega})\psi}{\psi}
+\frac{1}{2}\dual{S_{\omega}''(\phi_{\omega})w}{w}
\ge k(c^2+\|w\|_X^2).$$
Moreover, since $\|v\|_X=\|M(u)-\phi_{\omega}\|_X<\varepsilon_0$, 
it follows from \eqref{eq:5.2} that 
the right hand side of \eqref{eq:5.6} is non-negative, 
if $\varepsilon_0$ is sufficiently small. 
This completes the proof. 
\end{proof}

\begin{lemma}\label{lem4}
There exist $\lambda_1>0$ and a smooth mapping 
$\lambda\mapsto \varphi_{\lambda}$ from $(-\lambda_1,\lambda_1)$ to $X$ 
such that $\varphi_{0}=\phi_{\omega}$ and 
$$E(\varphi_{\lambda})<E(\phi_{\omega}), \quad 
Q(\varphi_{\lambda})=Q(\phi_{\omega}), \quad \lambda P(\varphi_{\lambda})<0 
\quad \mbox{for} \quad 0<|\lambda|<\lambda_1.$$
\end{lemma}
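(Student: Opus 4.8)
The plan is to build $\varphi_\lambda$ by hand, perturbing $\phi_\omega$ along the ``bad'' direction $\psi$ of $({\rm A2a})$ while rescaling the $\phi_\omega$-component so as to freeze the charge. Concretely, set
$$\varphi_\lambda=\alpha(\lambda)\phi_\omega+\lambda\psi,\qquad
\alpha(\lambda)=\sqrt{1-\lambda^2/\|\phi_\omega\|_H^2},$$
which is smooth (indeed real-analytic) in $\lambda$ on $(-\|\phi_\omega\|_H,\|\phi_\omega\|_H)$ and satisfies $\varphi_0=\phi_\omega$. Since $(\phi_\omega,\psi)_H=0$ and $\|\psi\|_H=1$, a direct computation gives $Q(\varphi_\lambda)=\frac{1}{2}\big(\alpha(\lambda)^2\|\phi_\omega\|_H^2+\lambda^2\big)=\frac{1}{2}\|\phi_\omega\|_H^2=Q(\phi_\omega)$, so the charge constraint holds identically. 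Moreover $\alpha(\lambda)-1=O(\lambda^2)$, hence $v_\lambda:=\varphi_\lambda-\phi_\omega=\lambda\psi+O(\lambda^2)$ in $X$, the $O(\lambda^2)$ correction lying in the span of $\phi_\omega$.

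For the energy, since $Q(\varphi_\lambda)=Q(\phi_\omega)$ we have $E(\varphi_\lambda)-E(\phi_\omega)=S_\omega(\varphi_\lambda)-S_\omega(\phi_\omega)$, and a second-order Taylor expansion at $\phi_\omega$ together with $S_\omega'(\phi_\omega)=0$ yields
$$E(\varphi_\lambda)-E(\phi_\omega)
=\frac{1}{2}\dual{S_\omega''(\phi_\omega)v_\lambda}{v_\lambda}+o(\|v_\lambda\|_X^2)
=\frac{\lambda^2}{2}\dual{S_\omega''(\phi_\omega)\psi}{\psi}+o(\lambda^2),$$
the $O(\lambda^2)$ part of $v_\lambda$ contributing only $O(\lambda^3)$ to the (bilinear) quadratic form. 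By $({\rm A2a})$, $\dual{S_\omega''(\phi_\omega)\psi}{\psi}<0$, so $E(\varphi_\lambda)<E(\phi_\omega)$ for $0<|\lambda|$ small.

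The decisive step is to compute $P(\varphi_\lambda)$ to first order in $\lambda$ from the representation formula \eqref{eq:4.5}. One uses $\theta(\phi_\omega)=0$ (since $(\phi_\omega,J\phi_\omega)_X=0$ by skew-symmetry), so the $C^2$ regularity of $\theta$ from Lemma \ref{lem1} gives $\theta(\varphi_\lambda)=O(\lambda)$, whence $M(\varphi_\lambda)-\phi_\omega=\lambda\psi+\theta(\varphi_\lambda)J\phi_\omega+O(\lambda^2)$ in $X$ and $\Lambda(\varphi_\lambda)=(M(\varphi_\lambda),\psi)_H=\lambda+O(\lambda^2)$ (using $\|\psi\|_H=1$ and $(J\phi_\omega,\psi)_H=0$). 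Writing $S_\omega'(M(\varphi_\lambda))=S_\omega''(\phi_\omega)(M(\varphi_\lambda)-\phi_\omega)+o(\lambda)$ and invoking the cancellation $S_\omega''(\phi_\omega)(J\phi_\omega)=0$ from Remark \ref{rem1} to annihilate the $\theta(\varphi_\lambda)J\phi_\omega$ term, the first term of \eqref{eq:4.5} equals $\lambda\dual{S_\omega''(\phi_\omega)\psi}{\psi}+o(\lambda)$; the second term equals $\Lambda(\varphi_\lambda)$ times a quotient whose numerator $\dual{S_\omega'(M(\varphi_\lambda))}{JI^{-1}RJ\phi_\omega}$ is $O(\lambda)$ and whose denominator $(M(\varphi_\lambda),J^2\phi_\omega)_X\to-\|J\phi_\omega\|_X^2\ne0$, hence is $O(\lambda^2)=o(\lambda)$. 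Therefore
$$P(\varphi_\lambda)=\dual{S_\omega''(\phi_\omega)\psi}{\psi}\,\lambda+o(\lambda),
\qquad \lambda P(\varphi_\lambda)=\dual{S_\omega''(\phi_\omega)\psi}{\psi}\,\lambda^2+o(\lambda^2)<0$$
for $0<|\lambda|$ small, again by $({\rm A2a})$. Choosing $\lambda_1>0$ small enough that the three estimates hold simultaneously completes the proof. I expect the main obstacle to be precisely this last computation: one must follow the modulation parameter $\theta(\varphi_\lambda)$ carefully enough to confirm that the $J\phi_\omega$-correction to $M(\varphi_\lambda)$ is killed by $S_\omega''(\phi_\omega)$, so that the sign of $P(\varphi_\lambda)$ is governed solely by $\dual{S_\omega''(\phi_\omega)\psi}{\psi}$ --- the quantity whose negativity is the content of $({\rm A2a})$.
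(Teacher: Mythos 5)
Your proposal is correct and is essentially the paper's own proof: your curve $\varphi_\lambda=\alpha(\lambda)\phi_\omega+\lambda\psi$ coincides with the paper's $\phi_\omega+\lambda\psi+\sigma(\lambda)\phi_\omega$ (since $Q(\psi)/Q(\phi_\omega)=1/\|\phi_\omega\|_H^2$ by $\|\psi\|_H=1$), and the charge, energy and $P$ expansions are the same Taylor computations, with the sign of $\lambda P(\varphi_\lambda)$ governed by $\dual{S_\omega''(\phi_\omega)\psi}{\psi}<0$. The only difference is that you spell out the first-order expansion of $P(\varphi_\lambda)$ via \eqref{eq:4.5} and the modulation parameter $\theta$, which the paper states without detail.
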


\begin{proof}
For $\lambda$ close to $0$, we define 
$$\varphi_{\lambda}=\phi_{\omega}+\lambda \psi+\sigma(\lambda)\phi_{\omega}, \quad 
\sigma (\lambda)=\left(1-\frac{Q(\psi)}{Q(\phi_{\omega})}\lambda^2\right)^{1/2}-1.$$
Then, we have $Q(\varphi_{\lambda})=Q(\phi_{\omega})$, 
$\sigma (\lambda)=O(\lambda^2)$, and 
\begin{align*}
&S_{\omega}(\varphi_{\lambda})=S_{\omega}(\phi_{\omega})
+\frac{\lambda^2}{2}\dual{S_{\omega}''(\phi_{\omega})\psi}{\psi}+o(\lambda^2), \\
&S_{\omega}'(\varphi_{\lambda})
=\lambda S_{\omega}''(\phi_{\omega})\psi+o(\lambda), \quad 
P(\varphi_{\lambda})=\lambda \dual{S_{\omega}''(\phi_{\omega})\psi}{\psi}+o(\lambda)
\end{align*}
as $\lambda\to 0$. This completes the proof. 
\end{proof}

\begin{proof}[Proof of Theorem \ref{thm1}]
Suppose that $\Trans (\omega t)\phi_{\omega}$ is stable. 
For $\lambda$ close to $0$, let $\varphi_{\lambda}\in X$ 
be the vector given in Lemma \ref{lem4}, 
and let $u_{\lambda}(t)$ be the solution of \eqref{eq:2.3} 
with $u_{\lambda}(0)=\varphi_{\lambda}$. 
Then, there exists $\lambda_0>0$ such that if $|\lambda|<\lambda_0$, 
then $u_{\lambda}(t)\in \mathcal{N}_{\varepsilon_0}(\phi_{\omega})$ for all $t\ge 0$, 
where $\varepsilon_0$ is the positive constant given in Lemma \ref{lem3}. 
Moreover, by the definition \eqref{eq:4.2} of $A$ and $\Lambda$, 
there exist positive constants $C_1$ and $C_2$ such that 
$|A(v)|\le C_1$ and $|\Lambda(v)|\le C_2$ 
for all $v\in \mathcal{N}_{\varepsilon_0}(\phi_{\omega})$. 
Let $\lambda \in (0,\lambda_0)$ and 
put $\delta_{\lambda}=E(\phi_{\omega})-E(\varphi_{\lambda})>0$. 
Since $P(\varphi_{\lambda})<0$ and $t\mapsto P(u_{\lambda}(t))$ is continuous, 
by Lemma \ref{lem3} and conservation of $E$ and $Q$, 
we see that $P(u_{\lambda}(t))<0$ for all $t\ge 0$ and that 
$$\delta_{\lambda}=E(\phi_{\omega})-E(u_{\lambda}(t))
\le -\Lambda (u_{\lambda}(t))P(u_{\lambda}(t))
\le -C_2 P(u_{\lambda}(t))$$
for all $t\ge 0$. Moreover, by Lemma \ref{lem2}, we have 
$$\frac{d}{dt}A(u_{\lambda}(t))=-P(u_{\lambda}(t))\ge \delta_{\lambda}/C_2$$
for all $t\ge 0$, which implies that $A(u_{\lambda}(t))\to \infty$ as $t\to \infty$. 
This contradicts the fact that $|A(u_{\lambda}(t))|\le C_1$ for all $t\ge 0$. 
Hence, $\Trans (\omega t)\phi_{\omega}$ is unstable. 
\end{proof}

\section{Proof of Theorem \ref{thm2}}\label{sect:proofthm2}

In this section we make the same assumptions as in Theorem \ref{thm2}. 
We modify the argument in the previous section to prove Theorem \ref{thm2}. 
We put 
\begin{equation}\label{eq:6.1}
\nu:=3\mu-\dual{S_{\omega}'''(\phi_{\omega})(\psi,\psi)}{\psi}.
\end{equation}
By the assumption \eqref{eq:3.1}, $\nu\ne 0$. 

\begin{lemma}\label{lem5}
There exist positive constants $\varepsilon_0$ and $k^*$ such that 
$$E(u)\ge E(\phi_{\omega})+\frac{\nu}{|\nu|}k^* P(u)$$
for all $u\in \mathcal{N}_{\varepsilon_0}(\phi_{\omega})$ 
satisfying $Q(u)=Q(\phi_{\omega})$. 
\end{lemma}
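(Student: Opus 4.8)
The plan is to mimic the proof of Lemma \ref{lem3}, but now one more delicate step of Taylor expansion is needed because, under (A2b), the quadratic term $\dual{S_{\omega}''(\phi_{\omega})\psi}{\psi}$ vanishes (see Remark \ref{rem2}), so the sign of the relevant expression is governed by a cubic quantity. First I would write $u\in\mathcal{N}_{\varepsilon_0}(\phi_{\omega})$ with $Q(u)=Q(\phi_{\omega})$, set $v=M(u)-\phi_{\omega}$ and decompose $v=a\phi_{\omega}+bJ\phi_{\omega}+c\psi+w$ with $w\in W$ (the space defined in \eqref{eq:5.1}), exactly as in Lemma \ref{lem3}. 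The charge constraint again gives $a=O(\|v\|_X^2)$, the orthogonality $(M(u),J\phi_{\omega})_X=0$ together with $(J\phi_{\omega},\psi)_X=0$ from (A2b) gives $b=O(\|w\|_X+|a|)=o(\|v\|_X)$ after accounting for the fact that $bJ\phi_\omega$ is now controlled by $\|w\|_X$ alone up to higher order, and $c=(v,\psi)_H=\Lambda(u)=O(\|v\|_X)$.

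The key new ingredient is a third-order Taylor expansion of $S_\omega$ at $\phi_\omega$. Writing $S_\omega(M(u))-S_\omega(\phi_\omega)=\tfrac12\dual{S_\omega''(\phi_\omega)v}{v}+\tfrac16\dual{S_\omega'''(\phi_\omega)(v,v)}{v}+o(\|v\|_X^3)$ and using $S_\omega''(\phi_\omega)(J\phi_\omega)=0$, $a=O(\|v\|_X^2)$, $S_\omega''(\phi_\omega)\psi=\mu Q'(\phi_\omega)$, and Remark \ref{rem2} (so $\dual{S_\omega''(\phi_\omega)\psi}{w}=\mu(\phi_\omega,w)_H=0$ for $w\in W$), the quadratic part collapses to $\tfrac12\dual{S_\omega''(\phi_\omega)w}{w}$ plus a term of the form $c\mu(\phi_\omega,v)_H=c\mu\cdot O(\|v\|_X^2)=o(\|v\|_X^2)$, while the leading cubic contribution is $\tfrac{c^3}{6}\dual{S_\omega'''(\phi_\omega)(\psi,\psi)}{\psi}$, all other cubic terms being $o(\|v\|_X^2)$ once one uses $\|w\|_X^2\le$ (controllable by the coercivity on $W$) and $c=O(\|v\|_X)$. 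Similarly, I would expand $P(u)$ via the representation \eqref{eq:4.5}: now $S_\omega'(M(u))=S_\omega''(\phi_\omega)v+\tfrac12 S_\omega'''(\phi_\omega)(v,v)+o(\|v\|_X^2)$, so $\dual{S_\omega'(M(u))}{\psi}=\mu(\phi_\omega,v)_H+\tfrac{c^2}{2}\dual{S_\omega'''(\phi_\omega)(\psi,\psi)}{\psi}+o(\|v\|_X^2)$, and since $\mu(\phi_\omega,v)_H=-\mu Q(v)=-\tfrac{c^2\mu}{2}\|\psi\|_H^2+o(\|v\|_X^2)=-\tfrac{c^2\mu}{2}+o(\|v\|_X^2)$ (using $(\phi_\omega,v)_H=-Q(v)$ and isolating the $\psi$-component), this gives $\dual{S_\omega'(M(u))}{\psi}=\tfrac{c^2}{2}\big(\dual{S_\omega'''(\phi_\omega)(\psi,\psi)}{\psi}-\mu\big)+o(\|v\|_X^2)$. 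The second term of \eqref{eq:4.5} carries a factor $\Lambda(u)=c=O(\|v\|_X)$ times $\dual{S_\omega'(M(u))}{JI^{-1}RJ\phi_\omega}=\dual{S_\omega''(\phi_\omega)v}{JI^{-1}RJ\phi_\omega}+o(\|v\|_X)$, which by Remark \ref{rem2} (the $\psi$-direction is $L^2$-orthogonal to $\phi_\omega$, and one needs $JI^{-1}RJ\phi_\omega$ to be $L^2$-orthogonal to $\phi_\omega$, which follows from skew-symmetry of $J$ in $(\cdot,\cdot)_H$) reduces to $\mu\cdot(\text{something})\cdot O(\|v\|_X^2)$ hence contributes $o(\|v\|_X^2)$. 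Therefore $P(u)=-\tfrac{c^2}{2}\nu+o(\|v\|_X^2)$ with $\nu$ as in \eqref{eq:6.1}.

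Combining, $E(u)-E(\phi_\omega)=\tfrac12\dual{S_\omega''(\phi_\omega)w}{w}+\tfrac{c^3}{6}\dual{S_\omega'''(\phi_\omega)(\psi,\psi)}{\psi}+o(\|v\|_X^2)$ and $\tfrac{\nu}{|\nu|}k^*P(u)=-\tfrac{\nu}{|\nu|}k^*\tfrac{c^2}{2}\nu+o(\|v\|_X^2)=-\tfrac{k^*|\nu|}{2}c^2+o(\|v\|_X^2)$, so
\[
E(u)-E(\phi_\omega)-\tfrac{\nu}{|\nu|}k^*P(u)=\tfrac12\dual{S_\omega''(\phi_\omega)w}{w}+\tfrac{k^*|\nu|}{2}c^2+O(|c|^3)+o(\|v\|_X^2).
\]
By (A3) the first term dominates $k_0\|w\|_X^2$, the second is a positive multiple of $c^2$, and by the analogue of \eqref{eq:5.2} one has $\|v\|_X\le C(|c|+\|w\|_X)$ up to higher order; choosing $\varepsilon_0$ small makes $O(|c|^3)$ and $o(\|v\|_X^2)$ negligible against $k_0\|w\|_X^2+\tfrac{k^*|\nu|}{2}c^2$, giving nonnegativity. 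I expect the main obstacle to be bookkeeping the cubic Taylor remainder: one must verify that every cross term in $\dual{S_\omega'''(\phi_\omega)(v,v)}{v}$ involving $a$, $b$, or $w$ (rather than $c\psi$ alone) is genuinely $o(\|v\|_X^2)$ — this is where $a=O(\|v\|_X^2)$, $b=o(\|v\|_X)$ and the coercive control of $\|w\|_X$ via (A3) are all used simultaneously, and it is the step where the extra hypotheses $(J\phi_\omega,\psi)_X=0$ and $E\in C^3(X,\R)$ in (A2b) are indispensable.
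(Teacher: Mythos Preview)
Your overall strategy matches the paper's, but two steps fail as written.

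First, your claim that the second term of \eqref{eq:4.5} is $o(\|v\|_X^2)$ rests on the assertion that $(\phi_\omega, JI^{-1}RJ\phi_\omega)_H=0$ by skew-symmetry. This is false: skew-symmetry of $J$ in $(\cdot,\cdot)_H$ gives $(\phi_\omega, JI^{-1}RJ\phi_\omega)_H=-(J\phi_\omega, I^{-1}RJ\phi_\omega)_H=-\dual{RJ\phi_\omega}{J\phi_\omega}=-\|J\phi_\omega\|_X^2\ne 0$. Hence the $c\psi$-part of $v$, via $S_\omega''(\phi_\omega)\psi=\mu Q'(\phi_\omega)$, contributes an extra $-c^2\mu$ to $P(u)$ through this second term. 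Together with the $-\tfrac{c^2\mu}{2}$ from the first term that you correctly found, one obtains $P(u)=-\tfrac{\nu}{2}c^2+\cdots$ with $\nu=3\mu-\dual{S_\omega'''(\phi_\omega)(\psi,\psi)}{\psi}$ as in \eqref{eq:6.1}; your computation would instead yield the coefficient $\mu-\dual{S_\omega'''(\phi_\omega)(\psi,\psi)}{\psi}$, so the hypothesis $\nu\ne 0$ could not be invoked.

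Second, you repeatedly absorb terms of size $O(\|w\|_X^2)$ and $O(|c|\,\|w\|_X)$ into $o(\|v\|_X^2)$. This is illegitimate: since $\|v\|_X\le C(|c|+\|w\|_X)$ up to higher order, a term like $\|w\|_X^2$ can be comparable to $\|v\|_X^2$. In $-\mu Q(v)$ there is a $-\tfrac{\mu}{2}\|w\|_H^2$ piece; in $\dual{S_\omega'''(\phi_\omega)(v,v)}{\psi}$ there is $2c\,\dual{S_\omega'''(\phi_\omega)(\psi,bJ\phi_\omega+w)}{\psi}$; the second term of \eqref{eq:4.5} also leaves $c\,\dual{S_\omega''(\phi_\omega)w}{JI^{-1}RJ\phi_\omega}$. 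All of these are $O(|c|\|w\|_X+\|w\|_X^2)$, not $o(\|v\|_X^2)$. The paper keeps them, obtaining $\bigl|P(u)+\tfrac{\nu}{2}c^2\bigr|\le k(|c|\|w\|_X+\|w\|_X^2)+o(\|v\|_X^2)$, and then chooses $k^*=k_0/(4k_1)$ small enough that $k^*$ times the $\|w\|_X^2$-loss coming from $P(u)$ is absorbed by the $\tfrac{k_0}{2}\|w\|_X^2$ gain from \eqref{eq:6.4}. In your version any $k^*>0$ would work, which is a sign that the error terms have been underestimated.
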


\begin{proof}
We put $v=M(u)-\phi_{\omega}$, and decompose $v$ as 
$$v=a\phi_{\omega}+bJ\phi_{\omega}+c\psi+w,$$
where $a$, $b$, $c\in \R$, $w\in W$, 
and $W$ is the set defined by \eqref{eq:5.1}. 
Then we have $(\phi_{\omega},v)_{H}=a\|\phi_{\omega}\|_{H}^2=-Q(v)$. 
Moreover, by \eqref{eq:2.1}, (A2b) and Lemma \ref{lem1}, we have 
$(\phi_{\omega},J\phi_{\omega})_X=(\psi,J\phi_{\omega})_X=(M(u),J\phi_{\omega})_X=0$. 
Thus, $0=(v,J\phi_{\omega})_X=b\|J\phi_{\omega}\|_X^2+(w,J\phi_{\omega})_X$, and 
\begin{equation}\label{eq:6.2}
\|bJ\phi_{\omega}\|_X\le \|w\|_X, \quad 
|c|\|\psi\|_X+2\|w\|_X\ge \|v\|_X-O(\|v\|_{X}^2).
\end{equation}
We also have \eqref{eq:5.3}. Here, by Remark \ref{rem2}, we have 
\begin{align}
\dual{S_{\omega}''(\phi_{\omega})v}{v}
&=c^2\dual{S_{\omega}''(\phi_{\omega})\psi}{\psi}
+2c \dual{S_{\omega}''(\phi_{\omega})\psi}{w}+\dual{S_{\omega}''(\phi_{\omega})w}{w}
+o(\|v\|_{X}^2) \nonumber \\
&=\dual{S_{\omega}''(\phi_{\omega})w}{w}+o(\|v\|_{X}^2). \label{eq:6.3}
\end{align}
By \eqref{eq:5.3}, \eqref{eq:6.3} and (A3), we have 
\begin{equation}\label{eq:6.4}
E(u)-E(\phi_{\omega})
=\frac{1}{2}\dual{S_{\omega}''(\phi_{\omega})w}{w}+o(\|v\|_{X}^2)
\ge \frac{k_0}{2}\|w\|_X^2-o(\|v\|_{X}^2).
\end{equation}
On the other hand, we have $c=(v,\psi)_H=\Lambda (u)=O(\|v\|_X)$ and 
\begin{align*}
&S_{\omega}'(\phi_{\omega}+v)
=S_{\omega}''(\phi_{\omega})v+\frac{1}{2}S_{\omega}'''(\phi_{\omega})(v,v)
+o(\|v\|_{X}^2), \\
&(M(u),J^2\phi_{\omega})_X
=(\phi_{\omega}+v,J^2\phi_{\omega})_X=-\|J\phi_{\omega}\|_X^2+O(\|v\|_X). 
\end{align*}
Thus, by \eqref{eq:4.5}, we have 
\begin{align*}
P(u)=\dual{S_{\omega}''(\phi_{\omega})\psi}{v}
&+\frac{1}{2}\dual{S_{\omega}'''(\phi_{\omega})(v,v)}{\psi} \\
&+\frac{c}{\|J\phi_{\omega}\|_{X}^2}
\dual{S_{\omega}''(\phi_{\omega})v}{JI^{-1}RJ\phi_{\omega}}+o(\|v\|_{X}^2).
\end{align*}
Here, by \eqref{eq:3.1} and \eqref{eq:6.2}, we have 
\begin{align*}
\dual{S_{\omega}''(\phi_{\omega})\psi}{v}
&=\mu (\phi_{\omega},v)_{H}=-\mu Q(v)=-\frac{\mu}{2}\|v\|_H^2 \\
&=-\frac{\mu}{2}\left\{a^2\|\phi_{\omega}\|_{H}^2
+b^2\|J\phi_{\omega}\|_{H}^2+c^2\|\psi\|_{H}^2+\|w\|_{H}^2\right\} \\
&=-\frac{c^2\mu}{2}+O(\|w\|_X^2)+o(\|v\|_X^2),
\end{align*} 
\begin{align*}
\dual{S_{\omega}'''(\phi_{\omega})(v,v)}{\psi}
=c^2 & \dual{S_{\omega}'''(\phi_{\omega})(\psi,\psi)}{\psi} 
+2c\dual{S_{\omega}'''(\phi_{\omega})(\psi,bJ\phi_{\omega}+w)}{\psi} \\
&+O(\|w\|_X^2)+o(\|v\|_X^2), 
\end{align*} 
\begin{align*}
&c\dual{S_{\omega}''(\phi_{\omega})v}{JI^{-1}RJ\phi_{\omega}}
=c\dual{S_{\omega}''(\phi_{\omega})(c\psi+w)}{JI^{-1}RJ\phi_{\omega}}+o(\|v\|_X^2) \\
&=-c^2\mu \|J\phi_{\omega}\|_X^2
+c\dual{S_{\omega}''(\phi_{\omega})w}{JI^{-1}RJ\phi_{\omega}}+o(\|v\|_X^2).
\end{align*}
Therefore, there exists a constant $k>0$ such that 
$$\left|P(u)+\frac{\nu}{2}c^2\right| 
\le k \left(|c|\|w\|_{X}+\|w\|_{X}^2\right)+o(\|v\|_{X}^2),$$
where $\nu$ is the constant defined by \eqref{eq:6.1}. 
Thus, there exists a constant $k_1>0$ such that 
\begin{equation}\label{eq:6.5}
-\frac{\nu}{|\nu|}P(u)\ge \frac{|\nu|}{4}c^2-k_1 \|w\|_{X}^2-o(\|v\|_{X}^2).
\end{equation}
By \eqref{eq:6.4} and \eqref{eq:6.5}, we have 
\begin{equation}\label{eq:6.6}
E(u)-E(\phi_{\omega})-\frac{\nu}{|\nu|}k^*P(u)
\ge k_2 c^2+k_3\|w\|_{X}^2-o(\|v\|_{X}^2),
\end{equation}
where $k^*={k_0}/4k_1$, $k_2=k^*|\nu|/4$ and $k_3=k_0/4$. 
Finally, since $\|v\|_X=\|M(u)-\phi_{\omega}\|_X<\varepsilon_0$, 
it follows from \eqref{eq:6.2} that 
the right hand side of \eqref{eq:6.6} is non-negative, 
if $\varepsilon_0$ is sufficiently small. This completes the proof. 
\end{proof}

\begin{lemma}\label{lem6}
There exist $\lambda_1>0$ and a smooth mapping $\lambda\mapsto \varphi_{\lambda}$ 
from $(-\lambda_1,\lambda_1)$ to $X$ such that $\varphi_{0}=\phi_{\omega}$ and 
$$E(\varphi_{\lambda})<E(\phi_{\omega}), \quad Q(\varphi_{\lambda})=Q(\phi_{\omega}) 
\quad \mbox{for} \quad 0<\frac{\nu}{|\nu|}\lambda<\lambda_1.$$
\end{lemma}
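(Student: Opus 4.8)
The plan is to construct $\varphi_{\lambda}$ explicitly in the same spirit as Lemma~\ref{lem4}, perturbing $\phi_{\omega}$ in the direction $\psi$ but now including a second-order correction to compensate for the fact that $\dual{S_{\omega}''(\phi_{\omega})\psi}{\psi}=0$ (Remark~\ref{rem2}). Concretely, I would set
\begin{equation}\label{eq:6.7}
\varphi_{\lambda}=\phi_{\omega}+\lambda\psi+\rho(\lambda)\phi_{\omega}+\tfrac{\lambda^2}{2}\eta,
\end{equation}
where $\eta\in X$ is a fixed correction vector and $\rho(\lambda)$ is chosen so that $Q(\varphi_{\lambda})=Q(\phi_{\omega})$ exactly. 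Since $Q$ is quadratic, the constraint $Q(\varphi_{\lambda})=Q(\phi_{\omega})$ is one scalar equation in $\rho$ that can be solved (as in Lemma~\ref{lem4}) with $\rho(\lambda)=O(\lambda^2)$, using that $\|\phi_{\omega}\|_H\ne 0$; I would just need to also absorb the $(\phi_{\omega},\eta)_H$ and $(\psi,\eta)_H$ cross terms, which only shifts $\rho$ by $O(\lambda^2)$.

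The heart of the matter is the Taylor expansion of $S_{\omega}$ along $\varphi_{\lambda}$. Because $S_{\omega}'(\phi_{\omega})=0$, writing $v_{\lambda}=\varphi_{\lambda}-\phi_{\omega}=\lambda\psi+O(\lambda^2)$ we get
\begin{equation}\label{eq:6.8}
S_{\omega}(\varphi_{\lambda})-S_{\omega}(\phi_{\omega})
=\tfrac12\dual{S_{\omega}''(\phi_{\omega})v_{\lambda}}{v_{\lambda}}
+\tfrac16\dual{S_{\omega}'''(\phi_{\omega})(v_{\lambda},v_{\lambda})}{v_{\lambda}}+o(\lambda^3).
\end{equation}
The $\lambda^2$ term is $\tfrac{\lambda^2}{2}\dual{S_{\omega}''(\phi_{\omega})\psi}{\psi}=0$ by Remark~\ref{rem2}. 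The $\lambda^3$ contributions come from two sources: the cross term $\lambda\cdot\lambda^2$ in $\tfrac12\dual{S_{\omega}''(\phi_{\omega})v_{\lambda}}{v_{\lambda}}$, which equals $\lambda^3\dual{S_{\omega}''(\phi_{\omega})\psi}{\tfrac12\eta+\rho''(0)\phi_{\omega}/2\cdot\dots}$ — but by Remark~\ref{rem2} again, $\dual{S_{\omega}''(\phi_{\omega})\psi}{w}=0$ whenever $(w,\phi_{\omega})_H=0$, and the component of the correction along $\phi_{\omega}$ contributes $O(\lambda^4)$ since $\rho=O(\lambda^2)$ times $\dual{S_{\omega}''(\phi_{\omega})\psi}{\phi_{\omega}}$ which by Remark~\ref{rem2} (with $w=\psi$, using symmetry) also vanishes — so this cross term is $o(\lambda^3)$ regardless of $\eta$; and the genuine cubic term $\tfrac{\lambda^3}{6}\dual{S_{\omega}'''(\phi_{\omega})(\psi,\psi)}{\psi}$. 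Hence
\begin{equation}\label{eq:6.9}
S_{\omega}(\varphi_{\lambda})-S_{\omega}(\phi_{\omega})
=\tfrac{\lambda^3}{6}\dual{S_{\omega}'''(\phi_{\omega})(\psi,\psi)}{\psi}+o(\lambda^3).
\end{equation}
Since $\dual{S_{\omega}'''(\phi_{\omega})(\psi,\psi)}{\psi}\ne 3\mu$ by \eqref{eq:3.1}, this cubic coefficient need not vanish — but here is the subtlety: if $\mu\ne 0$ the coefficient $\dual{S_{\omega}'''(\phi_{\omega})(\psi,\psi)}{\psi}$ could still be zero. This is precisely where the second-order correction $\eta$ enters: I would choose $\eta$ to solve $S_{\omega}''(\phi_{\omega})\eta=$ (an appropriate multiple of) $Q'(\phi_{\omega})$, exploiting \eqref{eq:3.1}, so that an additional $\lambda^3$ term proportional to $\mu$ is produced, making the total cubic coefficient equal to a nonzero multiple of $\nu=3\mu-\dual{S_{\omega}'''(\phi_{\omega})(\psi,\psi)}{\psi}$; concretely, recomputing \eqref{eq:6.8} with $v_{\lambda}=\lambda\psi+\tfrac{\lambda^2}{2}\eta+O(\lambda^2\text{ along }\phi_{\omega})$ should yield
\begin{equation}\label{eq:6.10}
S_{\omega}(\varphi_{\lambda})-S_{\omega}(\phi_{\omega})=-\tfrac{\lambda^3}{6}\,c_0\,\nu+o(\lambda^3)
\end{equation}
for some explicit constant $c_0>0$ (one would reverse-engineer $\eta$ to make the $\mu$-dependence come out as $+\tfrac{3\mu}{6}\lambda^3 c_0$, combining with $-\tfrac{1}{6}\dual{S_{\omega}'''(\phi_{\omega})(\psi,\psi)}{\psi}\lambda^3 c_0$). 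Because $S_{\omega}=E-\omega Q$ and $Q(\varphi_{\lambda})=Q(\phi_{\omega})$, we get $E(\varphi_{\lambda})-E(\phi_{\omega})=S_{\omega}(\varphi_{\lambda})-S_{\omega}(\phi_{\omega})=-\tfrac{\lambda^3}{6}c_0\nu+o(\lambda^3)$, and this is strictly negative exactly when $\tfrac{\nu}{|\nu|}\lambda>0$ and $\lambda$ is small, which is the claimed conclusion.

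The main obstacle is the bookkeeping in the previous paragraph: verifying that with the explicit correction \eqref{eq:6.7} the net $\lambda^3$ coefficient is a nonzero multiple of $\nu$ and not of the bare quantity $\dual{S_{\omega}'''(\phi_{\omega})(\psi,\psi)}{\psi}$. The delicate points are (i) correctly tracking which second-order cross terms in the Taylor expansion of $S_{\omega}''$ survive at order $\lambda^3$ — Remark~\ref{rem2}, which says $\dual{S_{\omega}''(\phi_{\omega})\psi}{w}=0$ for all $w$ with $(w,\phi_{\omega})_H=0$, kills most of them — and (ii) choosing $\eta$ so that $S_{\omega}''(\phi_{\omega})\eta$ pairs nontrivially against $\psi$, which is possible precisely because of the relation $S_{\omega}''(\phi_{\omega})\psi=\mu Q'(\phi_{\omega})$ in \eqref{eq:3.1} (solvability of the linear equation for $\eta$ may require working modulo the kernel of $S_{\omega}''(\phi_{\omega})$, handled via (A3) together with the Fredholm-type structure already implicit in the setup). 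I would present the computation compactly, citing Remark~\ref{rem2} to discard vanishing terms, rather than expanding every bracket. Once \eqref{eq:6.10} is in hand the conclusion is immediate.
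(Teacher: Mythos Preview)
Your overall strategy---perturb $\phi_{\omega}$ in the direction $\psi$ within the level set $\{Q=Q(\phi_{\omega})\}$ and Taylor-expand $S_{\omega}$ to third order---is exactly the paper's approach. But you have made a concrete computational error that leads you to introduce the unnecessary correction $\eta$.

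The error is your claim that $\dual{S_{\omega}''(\phi_{\omega})\psi}{\phi_{\omega}}=0$ ``by Remark~\ref{rem2} (with $w=\psi$, using symmetry)''. Remark~\ref{rem2} says $\dual{S_{\omega}''(\phi_{\omega})\psi}{w}=0$ only for $w$ with $(w,\phi_{\omega})_H=0$; you cannot take $w=\phi_{\omega}$. In fact, directly from \eqref{eq:3.1},
\[
\dual{S_{\omega}''(\phi_{\omega})\psi}{\phi_{\omega}}
=\mu\,\dual{Q'(\phi_{\omega})}{\phi_{\omega}}
=\mu\,\|\phi_{\omega}\|_H^2,
\]
which is nonzero whenever $\mu\ne 0$. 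This is precisely the source of the $\mu$-term you are trying to manufacture with $\eta$. With the simple ansatz $\varphi_{\lambda}=\phi_{\omega}+\lambda\psi+\sigma(\lambda)\phi_{\omega}$ (no $\eta$), one has $\sigma(\lambda)=-\lambda^2/(2\|\phi_{\omega}\|_H^2)+O(\lambda^4)$, and the cross term in $\tfrac12\dual{S_{\omega}''(\phi_{\omega})v_{\lambda}}{v_{\lambda}}$ is
\[
\lambda\,\sigma(\lambda)\,\dual{S_{\omega}''(\phi_{\omega})\psi}{\phi_{\omega}}
=-\tfrac{\mu}{2}\lambda^3+O(\lambda^5).
\]
Adding the genuine cubic $\tfrac{\lambda^3}{6}\dual{S_{\omega}'''(\phi_{\omega})(\psi,\psi)}{\psi}$ yields exactly $-\tfrac{\nu}{6}\lambda^3+o(\lambda^3)$, which is your \eqref{eq:6.10} with $c_0=1$. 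So the paper's proof is just your construction with $\eta=0$; the $\mu$ comes for free from the $Q$-constraint correction $\sigma(\lambda)\phi_{\omega}$, and your proposed machinery for $\eta$ (solving $S_{\omega}''(\phi_{\omega})\eta\in\R\,Q'(\phi_{\omega})$, which would force $\eta\in\R\psi+\ker S_{\omega}''(\phi_{\omega})$ anyway) is both unnecessary and ineffective.
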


\begin{proof}
For $\lambda$ close to $0$, we define 
$$\varphi_{\lambda}=\phi_{\omega}+\lambda \psi+\sigma(\lambda)\phi_{\omega}, \quad 
\sigma (\lambda)=\left(1-\frac{Q(\psi)}{Q(\phi_{\omega})}\lambda^2\right)^{1/2}-1.$$
Then, we have $Q(\varphi_{\lambda})=Q(\phi_{\omega})$ and 
\begin{align*}
&\sigma (\lambda)=-\frac{1}{2\|\phi_{\omega}\|_H^2}\lambda^2+O(\lambda^4), \\
&S_{\omega}(\varphi_{\lambda})=S_{\omega}(\phi_{\omega})+\frac{\lambda^2}{2}
\dual{S_{\omega}''(\phi_{\omega})\psi}{\psi}+\lambda \sigma (\lambda)
\dual{S_{\omega}''(\phi_{\omega})\psi}{\phi_{\omega}} \\
&\hspace{20mm}
+\frac{\lambda^3}{6}\dual{S_{\omega}'''(\phi_{\omega})(\psi,\psi)}{\psi}+o(\lambda^3).
\end{align*}
Here, by \eqref{eq:3.1} we have 
$\dual{S_{\omega}''(\phi_{\omega})\psi}{\psi}=\mu (\phi_{\omega},\psi)_H=0$ and 
$\dual{S_{\omega}''(\phi_{\omega})\psi}{\phi_{\omega}} =\mu \|\phi_{\omega}\|_H^2$. Thus, 
$$S_{\omega}(\varphi_{\lambda})
=S_{\omega}(\phi_{\omega})-\frac{\nu}{6}\lambda^3+o(\lambda^3).$$
This completes the proof. 
\end{proof}

By Lemmas \ref{lem5} and \ref{lem6}, we can prove Theorem \ref{thm2} 
in the same way as in the proof of Theorem \ref{thm1}. We omit the detail. 

\section{Proofs of Corollaries}\label{sect:proofcor}

In this section we prove Corollaries \ref{cor2}, \ref{cor3} and \ref{cor4}. 
We first give a sufficient condition for (A3). 

\begin{lemma}\label{lem7}
Assume $({\rm B2a})$ and $({\rm B3})$. Assume further that there exist 
$\psi\in X$ and constants $\lambda\le 0$ and $\mu\in \R$ such that 
$\|\psi\|_H=1$, $(\phi_{\omega},\psi)_H=(J\phi_{\omega},\psi)_H=0$ 
and $S_{\omega}''(\phi_{\omega})\psi=\lambda I\psi+\mu Q'(\phi_{\omega})$. 
Then $({\rm A3})$ holds. 
\end{lemma}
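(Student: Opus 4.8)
The plan is to establish $({\rm A3})$ in two stages. First, show that the quadratic form $w\mapsto\dual{S_\omega''(\phi_\omega)w}{w}$ is strictly positive on the constrained subspace $W$ of \eqref{eq:5.1} built from the given $\psi$; then upgrade this to the coercive bound \eqref{eq:3.2} by a weak-compactness argument driven by $({\rm B3})$. Write $L=S_\omega''(\phi_\omega)$, a bounded symmetric form on $X$. The computation that drives everything is that $\psi$ is $L$-orthogonal to $W$: for $w\in W$, symmetry of $L$ together with the hypothesis $L\psi=\lambda I\psi+\mu Q'(\phi_\omega)$ gives $\dual{Lw}{\psi}=\dual{L\psi}{w}=\lambda(\psi,w)_H+\mu(\phi_\omega,w)_H=0$; in particular $\dual{L\psi}{\psi}=\lambda\le 0$.

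For positivity on $W$ I would first record that $(\chi_\omega,\psi)_H\ne 0$: if it vanished, then $\psi$ would satisfy both orthogonality constraints of $({\rm B2a})$ (recall $(J\phi_\omega,\psi)_H=0$) and be nonzero, so $({\rm B2a})$ would force $\dual{L\psi}{\psi}>0$, contradicting $\dual{L\psi}{\psi}=\lambda\le 0$. Now let $w\in W\setminus\{0\}$ and suppose, for contradiction, that $\dual{Lw}{w}\le 0$. Pick $t\in\R$ with $(\chi_\omega,w+t\psi)_H=0$, which is possible since $(\chi_\omega,\psi)_H\ne 0$, and set $p=w+t\psi$; then $(J\phi_\omega,p)_H=0$ as well, so $p$ lies in the positivity cone of $({\rm B2a})$. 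But by the $L$-orthogonality above, $\dual{Lp}{p}=\dual{Lw}{w}+t^2\lambda\le 0$, forcing $p=0$; since $(\psi,w)_H=0$ this yields $\|w\|_H^2+t^2=\|p\|_H^2=0$, hence $w=0$, a contradiction. Thus $\dual{Lw}{w}>0$ on $W\setminus\{0\}$.

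Finally, to obtain \eqref{eq:3.2}, note that $W$ is a closed, hence weakly closed, subspace of $X$, and set $m=\inf\{\dual{Lw}{w}:w\in W,\ \|w\|_X=1\}$, which is $\ge 0$ by the previous step. For a minimizing sequence $w_n$, extract $w_n\rightharpoonup w_*$ weakly in $X$ with $w_*\in W$; if $w_*\ne 0$ then weak lower semicontinuity from $({\rm B3})$ gives $0<\dual{Lw_*}{w_*}\le m$, while if $w_*=0$ the last clause of $({\rm B3})$ gives $m=\lim_n\dual{Lw_n}{w_n}>0$. Either way $m>0$, and homogeneity gives \eqref{eq:3.2} with $k_0=m$. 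I expect the only genuine subtlety to be the observation $(\chi_\omega,\psi)_H\ne 0$ and the resulting use of $\psi$ to correct the $\chi_\omega$-component of $w$ into the cone of $({\rm B2a})$; the remainder is the standard Grillakis--Shatah--Strauss positivity-to-coercivity bookkeeping, which here uses exactly the three clauses of $({\rm B3})$.
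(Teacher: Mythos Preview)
Your proof is correct and follows essentially the same two-stage strategy as the paper: first strict positivity of $\dual{S_\omega''(\phi_\omega)\cdot}{\cdot}$ on $W\setminus\{0\}$ via a linear combination with $\psi$ landing in the positivity cone of $({\rm B2a})$, then upgrading to coercivity by a weak-compactness argument using the three clauses of $({\rm B3})$. The only organizational difference is that you first establish $(\chi_\omega,\psi)_H\ne 0$ to solve explicitly for the coefficient $t$, whereas the paper simply picks any nontrivial $(\alpha,\beta)$ with $(\alpha\psi+\beta w_0,\chi_\omega)_H=0$ and checks $p\ne 0$ directly from the $H$-orthogonality of $\psi$ and $w_0$; both routes are equivalent.
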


\begin{proof}
First we claim that $\dual{S_{\omega}''(\phi_{\omega})w}{w}>0$ 
for all $w\in X$ satisfying $w\ne 0$ and 
$(\phi_{\omega},w)_H=(J\phi_{\omega},w)_H=(\psi,w)_H=0$. 
We prove this by contradiction. 
Suppose that there exists $w_0\in X$ such that 
$\dual{S_{\omega}''(\phi_{\omega})w_0}{w_0}\le 0$, $w_0\ne 0$ and 
$(\phi_{\omega},w_0)_H=(J\phi_{\omega},w_0)_H=(\psi,w_0)_H=0$. 
Then there exists $(\alpha,\beta)\in \R^2$ such that 
$(\alpha,\beta)\ne (0,0)$ and $(\alpha \psi+\beta w_0,\chi_{\omega})_H=0$. 
We put $p=\alpha \psi+\beta w_0$. 
Then $p\in X$ satisfies $(\chi_{\omega},p)_H=(J\phi_{\omega},p)_H=0$ 
and $p\ne 0$. Thus, by (B2a), we have $\dual{S_{\omega}''(\phi_{\omega})p}{p}>0$. 
On the other hand, we have 
\begin{align*}
&\dual{S_{\omega}''(\phi_{\omega})\psi}{w_0} 
=\lambda (\psi,w_0)_H+\mu (\phi_{\omega},w_0)_H=0, \\
&\dual{S_{\omega}''(\phi_{\omega})p}{p} 
=\alpha^2\lambda \|\psi\|_H^2
+2\alpha \beta \dual{S_{\omega}''(\phi_{\omega})\psi}{w_0}
+\beta^2\dual{S_{\omega}''(\phi_{\omega})w_0}{w_0}\le 0.
\end{align*}
This contradiction proves our first claim. 
Next we prove (A3) by contradiction. Suppose that (A3) does not hold. 
Then there exists a sequence $(w_n)$ in $X$ such that 
$\dual{S_{\omega}''(\phi_{\omega})w_n}{w_n}\to 0$, 
$\|w_n\|_X=1$ and $(\phi_{\omega},w_n)_H=(J\phi_{\omega},w_n)_H=(\psi,w_n)_H=0$. 
There exist a subsequence $(w_{n'})$ of $(w_n)$ and $w\in X$ 
such that $w_{n'}\rightharpoonup w$ weakly in $X$. 
By (B3), we see that $w\ne 0$, 
$(\phi_{\omega},w)_H=(J\phi_{\omega},w)_H=(\psi,w)_H=0$ and 
$$\dual{S_{\omega}''(\phi_{\omega})w}{w}\le \liminf_{n'\to \infty}
\dual{S_{\omega}''(\phi_{\omega})w_{n'}}{w_{n'}}=0.$$
However, this contradicts the first claim. This completes the proof. 
\end{proof}

\begin{proof}[Proof of Corollary \ref{cor2}]
We verify that $\phi_{\omega_0}$ satisfies the assumptions 
(A1), (A2b) and (A3) of Theorem \ref{thm2}. 
First (A1) follows from (B1). 
Next, by (B1), $E'(\phi_{\omega})=\omega Q'(\phi_{\omega})$ for all $\omega\in \Omega$. 
Differentiating this with respect to $\omega$, we have 
\begin{equation}\label{eq:7.1}
S_{\omega}''(\phi_{\omega})\phi_{\omega}'=Q'(\phi_{\omega}), \quad 
S_{\omega}'''(\phi_{\omega})(\phi_{\omega}',\phi_{\omega}')
+S_{\omega}''(\phi_{\omega})\phi_{\omega}''=2Q''(\phi_{\omega})\phi_{\omega}',
\end{equation}
where $\phi_{\omega}'=d\phi_{\omega}/d\omega$ 
and $\phi_{\omega}''=d^2\phi_{\omega}/d\omega^2$. 
While, differentiating $d(\omega)=E(\phi_{\omega})-\omega Q(\phi_{\omega})$, 
we have 
\begin{align}
d'(\omega)&=\dual{E'(\phi_{\omega})}{\phi_{\omega}'}
-\omega \dual{Q'(\phi_{\omega})}{\phi_{\omega}'}
-Q(\phi_{\omega})=-Q(\phi_{\omega}), \nonumber \\
d''(\omega)&=-\dual{Q'(\phi_{\omega})}{\phi_{\omega}'}
=-(\phi_{\omega},\phi_{\omega}')_H
=-\dual{S_{\omega}''(\phi_{\omega})\phi_{\omega}'}{\phi_{\omega}'}. 
\label{eq:7.2}
\end{align}
Moreover, by \eqref{eq:7.1} and \eqref{eq:7.2}, we have 
\begin{align}
d'''(\omega)
&=-\dual{Q''(\phi_{\omega})\phi_{\omega}'}{\phi_{\omega}'}
-\dual{Q'(\phi_{\omega})}{\phi_{\omega}''} \nonumber \\
&=\dual{S_{\omega}'''(\phi_{\omega})(\phi_{\omega}',\phi_{\omega}')}{\phi_{\omega}'}
-3\dual{Q''(\phi_{\omega})\phi_{\omega}'}{\phi_{\omega}'} \nonumber \\
&=\dual{S_{\omega}'''(\phi_{\omega})(\phi_{\omega}',\phi_{\omega}')}
{\phi_{\omega}'}-3\|\phi_{\omega}'\|_H^2. \label{eq:7.3}
\end{align}
Here we take 
$$\mu=\frac{1}{\|\phi_{\omega_0}'\|_H}, \quad \psi=\mu \phi_{\omega_0}'.$$
Then, $\|\psi\|_H=1$ and 
$S_{\omega_0}''(\phi_{\omega_0})\psi=\mu Q'(\phi_{\omega_0})$. 
By (B1), we have $(J\phi_{\omega_0},\psi)_H=(J\phi_{\omega_0},\psi)_X=0$. 
Moreover, since $d''(\omega_0)=0$ and $d'''(\omega_0)\ne 0$, 
by \eqref{eq:7.2} and \eqref{eq:7.3}, we have $(\phi_{\omega_0},\psi)_H=0$ and 
$$\dual{S_{\omega_0}'''(\phi_{\omega_0})(\psi,\psi)}{\psi}
=\mu^3\dual{S_{\omega_0}'''(\phi_{\omega_0})
(\phi_{\omega_0}',\phi_{\omega_0}')}{\phi_{\omega_0}'}\ne 3\mu.$$ 
Thus, (A2b) is verified. 
Finally, (A3) follows from (A2b) and Lemma \ref{lem7}. 
\end{proof}

The following lemma is used in the proof of Corollary \ref{cor3}. 

\begin{lemma}\label{lem8}
Assume $({\rm B1})$ and that for each $\omega\in \Omega$, 
$({\rm B2a})$ and $({\rm B3})$ hold. 
If $\omega_0\in \Omega$ satisfies $d''(\omega_0)<0$, 
then there exist $\psi\in X$ and constants $\lambda<0$ and $\mu\in \R$ 
such that $\|\psi\|_H=1$, $(\phi_{\omega_0},\psi)_H=(J\phi_{\omega_0},\psi)_H=0$ 
and $S_{\omega_0}''(\phi_{\omega_0})\psi=\lambda I\psi+\mu Q'(\phi_{\omega_0})$. 
\end{lemma}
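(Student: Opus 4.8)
\textbf{Proof proposal for Lemma \ref{lem8}.}

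The plan is to produce the vector $\psi$ by projecting $\phi_{\omega_0}'$ onto the constraint space and then diagonalising $S_{\omega_0}''(\phi_{\omega_0})$ modulo the one-dimensional correction $\mu Q'(\phi_{\omega_0})$. First I would recall from \eqref{eq:7.1} that $S_{\omega_0}''(\phi_{\omega_0})\phi_{\omega_0}'=Q'(\phi_{\omega_0})=I\phi_{\omega_0}$ and from \eqref{eq:7.2} that $d''(\omega_0)=-(\phi_{\omega_0},\phi_{\omega_0}')_H=-\dual{S_{\omega_0}''(\phi_{\omega_0})\phi_{\omega_0}'}{\phi_{\omega_0}'}$, so $d''(\omega_0)<0$ forces both $(\phi_{\omega_0},\phi_{\omega_0}')_H>0$ and $\dual{S_{\omega_0}''(\phi_{\omega_0})\phi_{\omega_0}'}{\phi_{\omega_0}'}>0$. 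In particular $\phi_{\omega_0}'\ne 0$, and by (B1) we already have $(J\phi_{\omega_0},\phi_{\omega_0}')_H=0$. Since $(\phi_{\omega_0},\phi_{\omega_0}')_H\ne 0$, the vector $\phi_{\omega_0}'$ is not $H$-orthogonal to $\phi_{\omega_0}$; the idea is to add a suitable multiple of $\phi_{\omega_0}$ (which lies in the kernel neither helps nor hurts, since $S_{\omega_0}''(\phi_{\omega_0})\phi_{\omega_0}$ is controlled) to kill that component while keeping the relation $S_{\omega_0}''(\phi_{\omega_0})(\,\cdot\,)\in \mathrm{span}\{I\psi,Q'(\phi_{\omega_0})\}$.

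More concretely, I would look for $\psi$ of the form $\psi=t(\phi_{\omega_0}'+r\phi_{\omega_0})$ with $t>0$ a normalisation constant and $r\in\R$ chosen so that $(\phi_{\omega_0},\psi)_H=0$, i.e. $r=-(\phi_{\omega_0},\phi_{\omega_0}')_H/\|\phi_{\omega_0}\|_H^2$; this is well-defined. Then $(J\phi_{\omega_0},\psi)_H=t\,r\,(J\phi_{\omega_0},\phi_{\omega_0})_H$, and $(J\phi_{\omega_0},\phi_{\omega_0})_H=0$ by the skew-symmetry \eqref{eq:2.1}, so $(J\phi_{\omega_0},\psi)_H=0$ as required. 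The normalisation $t$ is fixed by $\|\psi\|_H=1$, which is possible precisely because $\phi_{\omega_0}'+r\phi_{\omega_0}\ne 0$ (it has nonzero $\phi_{\omega_0}'$-component in the $H$-orthogonal complement of $\phi_{\omega_0}$, as $d''(\omega_0)<0$ gives $\phi_{\omega_0}'\notin\R\phi_{\omega_0}$). Now apply $S_{\omega_0}''(\phi_{\omega_0})$: using \eqref{eq:7.1} and the fact that $S_{\omega_0}''(\phi_{\omega_0})\phi_{\omega_0}=E''(\phi_{\omega_0})\phi_{\omega_0}-\omega_0 I\phi_{\omega_0}$ — hmm, this last term need not be a multiple of $I\psi$ or $Q'(\phi_{\omega_0})$.

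So the honest approach is different: I would instead take $\psi$ proportional to $\phi_{\omega_0}'$ directly, accepting $(\phi_{\omega_0},\psi)_H\ne 0$ in general, but note that by \eqref{eq:7.2} $d''(\omega_0)=-(\phi_{\omega_0},\phi_{\omega_0}')_H$, so requiring $(\phi_{\omega_0},\psi)_H=0$ amounts to $d''(\omega_0)=0$, which contradicts $d''(\omega_0)<0$. Hence $\psi\propto\phi_{\omega_0}'$ cannot work, and the lemma must be proved by a genuine spectral/variational argument rather than an explicit formula. The real plan is: consider the self-adjoint realisation of $L:=S_{\omega_0}''(\phi_{\omega_0})$ on the $H$-orthogonal complement of $\{J\phi_{\omega_0}\}$ (invariant by Remark \ref{rem1}), and restrict further to $\{\phi_{\omega_0}\}^{\perp_H}$. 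On this space, (B2a) says $L$ has at most one nonpositive direction beyond the kernel, and $d''(\omega_0)<0$ combined with the identity $\dual{L\phi_{\omega_0}'}{\phi_{\omega_0}'}>0$, $L\phi_{\omega_0}'=I\phi_{\omega_0}$ should — by the standard argument in Section 3 of \cite{GSS1} / Lemma 4.4-type reasoning — produce a negative eigenvalue $\lambda<0$ with eigenvector $\chi$ satisfying $L\chi=\lambda I\chi$; then set $\psi=\chi/\|\chi\|_H$. The condition $(\phi_{\omega_0},\psi)_H=0$ should follow because the eigenvector can be chosen in the span of $\chi_{\omega_0}$ and $\phi_{\omega_0}'$, intersected with $\{\phi_{\omega_0}\}^{\perp_H}$, a nontrivial intersection; the $Q'(\phi_{\omega_0})=I\phi_{\omega_0}$ correction term $\mu Q'(\phi_{\omega_0})$ absorbs exactly the failure of $\psi$ to be a genuine eigenvector, since $L(\mathrm{span}\{\chi_{\omega_0},\phi_{\omega_0}'\})\subseteq\mathrm{span}\{I\chi_{\omega_0},I\phi_{\omega_0}\}$ and $I\phi_{\omega_0}=Q'(\phi_{\omega_0})$. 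The main obstacle I anticipate is verifying that the relevant $2\times 2$ restriction of $L$ really has a negative eigenvalue after imposing the single linear constraint $(\phi_{\omega_0},\cdot)_H=0$ — this is where $d''(\omega_0)<0$ must be used quantitatively, via the sign information $(\phi_{\omega_0},\phi_{\omega_0}')_H>0$ and $\dual{L\phi_{\omega_0}'}{\phi_{\omega_0}'}=-d''(\omega_0)>0$, to force the constrained quadratic form to be indefinite.
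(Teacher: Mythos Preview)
Your exploration correctly abandons the first two explicit attempts, but the third ``real plan'' still has a genuine gap. You propose to take $\psi$ in $\mathrm{span}\{\chi_{\omega_0},\phi_{\omega_0}'\}\cap\{\phi_{\omega_0}\}^{\perp_H}$ and argue that since $L(\mathrm{span}\{\chi_{\omega_0},\phi_{\omega_0}'\})\subseteq\mathrm{span}\{I\chi_{\omega_0},I\phi_{\omega_0}\}$, the correction $\mu Q'(\phi_{\omega_0})=\mu I\phi_{\omega_0}$ will absorb the non-eigenvector part. But the target equation is $L\psi=\lambda I\psi+\mu I\phi_{\omega_0}$, and $I\psi$ involves $I\phi_{\omega_0}'$, not just $I\chi_{\omega_0}$ and $I\phi_{\omega_0}$. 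Writing $\psi=a\chi_{\omega_0}+b\phi_{\omega_0}'$ and matching both sides (in the generic case where $\chi_{\omega_0},\phi_{\omega_0},\phi_{\omega_0}'$ are linearly independent) forces the $I\phi_{\omega_0}'$-coefficient to vanish, i.e.\ $\lambda b=0$. Since you need $\lambda<0$ this gives $b=0$, so $\psi\propto\chi_{\omega_0}$; but then $(\phi_{\omega_0},\psi)_H=0$ fails unless $(\phi_{\omega_0},\chi_{\omega_0})_H=0$, which (B2a) does not guarantee. So the two-dimensional ansatz cannot produce $\psi$ in general.

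The paper's proof avoids any explicit construction and uses a constrained variational argument: set
\[
\lambda=\inf\{\dual{S_{\omega_0}''(\phi_{\omega_0})w}{w}:w\in X,~\|w\|_H=1,~(\phi_{\omega_0},w)_H=0\}.
\]
One has $-\infty<\lambda$ by \eqref{eq:3.3} in (B3), and $\lambda<0$ by Theorem~4.1 of \cite{GSS1} (your own observation about the $2\times 2$ restriction does give a valid test vector here, which is exactly what that step needs). Then (B3) provides the weak lower semicontinuity and coercivity to show the infimum is attained at some $\psi$, and the Euler--Lagrange equation for this constrained minimisation is precisely $S_{\omega_0}''(\phi_{\omega_0})\psi=\lambda I\psi+\mu Q'(\phi_{\omega_0})$ with a Lagrange multiplier $\mu$. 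Finally $(J\phi_{\omega_0},\psi)_H=0$ drops out by pairing this equation with $J\phi_{\omega_0}$ and using $S_{\omega_0}''(\phi_{\omega_0})(J\phi_{\omega_0})=0$ together with skew-symmetry. The point is that the minimiser $\psi$ need not lie in any finite-dimensional span you can write down in advance.
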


\begin{proof}
We define 
\begin{equation}\label{eq:7.4}
\lambda=\inf\{\dual{S_{\omega_0}''(\phi_{\omega_0})w}{w}:
w\in X,~ \|w\|_H=1,~ (\phi_{\omega_0},w)_H=0\}.
\end{equation}
By Theorem 4.1 of \cite{GSS1} and by \eqref{eq:3.3} in (B3), 
we see that $-\infty<\lambda<0$. 
Moreover, by the standard variational argument with (B3) 
(see, e.g., Chapter 11 of \cite{LL}), 
we see that \eqref{eq:7.4} is attained at some $\psi$, 
that is, there exists $\psi\in X$ such that 
$\dual{S_{\omega_0}''(\phi_{\omega_0})\psi}{\psi}=\lambda$, 
$\|\psi\|_H=1$ and $(\phi_{\omega_0},\psi)_H=0$. 
Then there exists a Lagrange multiplier $\mu\in \R$ such that 
$S_{\omega_0}''(\phi_{\omega_0})\psi=\lambda I\psi+\mu Q'(\phi_{\omega_0})$. 
Finally, by this equation, we have 
$$\lambda (\psi,J\phi_{\omega_0})_H
=\dual{S_{\omega_0}''(\phi_{\omega_0})(J\phi_{\omega_0})}{\psi}
-\mu (\phi_{\omega_0},J\phi_{\omega_0})_H=0.$$
Since $\lambda\ne 0$, we have $(J\phi_{\omega_0},\psi)_H=0$. 
This completes the proof. 
\end{proof}

\begin{proof}[Proof of Corollary \ref{cor3}]
We verify that $\phi_{\omega_0}$ satisfies the assumptions 
(A1), (A2a) and (A3) of Theorem \ref{thm1}. 
(A1) follows from (B1), and (A2a) follows from Lemma \ref{lem8}. 
Finally, (A3) follows from Lemmas \ref{lem7} and \ref{lem8}. 
\end{proof}

The following lemma is based on Theorem 2 of \cite{mae2} 
(see also Theorem 3.3 of \cite{GSS1}), 
and is used in the proof of Corollary \ref{cor4}. 

\begin{lemma}\label{lem9}
Assume $({\rm B1})$ and that for each $\omega\in \Omega$, 
$({\rm B2b})$ and $({\rm B3})$ hold. 
If $\omega_0\in \Omega$ satisfies $d''(\omega_0)>0$, 
then there exists a constant $k_0>0$ such that
$$\dual{S_{\omega_0}''(\phi_{\omega_0})w}{w}\ge k_0 \|w\|_X^2$$
for all $w\in X$ satisfying 
$(\phi_{\omega_0},w)_H=(\chi_{1,\omega_0},w)_H=(J\phi_{\omega_0},w)_H=0$. 
\end{lemma}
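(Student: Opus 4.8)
The plan is to prove Lemma \ref{lem9} by the same contradiction-plus-weak-compactness scheme that underlies Lemma \ref{lem7}, but now working one dimension higher because (B2b) only controls $S_{\omega_0}''(\phi_{\omega_0})$ on the codimension-three subspace orthogonal (in $H$) to $\chi_{0,\omega_0}$, $\chi_{1,\omega_0}$ and $J\phi_{\omega_0}$, whereas the conclusion concerns the codimension-three subspace orthogonal to $\phi_{\omega_0}$, $\chi_{1,\omega_0}$ and $J\phi_{\omega_0}$. So the first step is to prove the strict-positivity claim: $\dual{S_{\omega_0}''(\phi_{\omega_0})w}{w}>0$ for every $w\neq 0$ with $(\phi_{\omega_0},w)_H=(\chi_{1,\omega_0},w)_H=(J\phi_{\omega_0},w)_H=0$. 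Assuming this fails, pick such a $w_0\neq 0$ with $\dual{S_{\omega_0}''(\phi_{\omega_0})w_0}{w_0}\le 0$. I would form the two-parameter family $p=\alpha\chi_{0,\omega_0}+\beta w_0$ and choose $(\alpha,\beta)\neq(0,0)$ so that $p$ is $H$-orthogonal to $\phi_{\omega_0}$; this is one linear condition on two unknowns, so a nontrivial solution exists. Since $w_0\perp_H\chi_{1,\omega_0}$ and $\chi_{0,\omega_0}\perp_H\chi_{1,\omega_0}$ (the latter from (B2b)), and similarly $w_0\perp_H J\phi_{\omega_0}$ while $\chi_{0,\omega_0}\perp_H J\phi_{\omega_0}$ (this last orthogonality needs checking, exactly as in Lemma \ref{lem8}: from $S_{\omega_0}''(\phi_{\omega_0})\chi_{0,\omega_0}=\lambda_{0,\omega_0}I\chi_{0,\omega_0}$, pairing with $J\phi_{\omega_0}$ and using $S_{\omega_0}''(\phi_{\omega_0})(J\phi_{\omega_0})=0$ from Remark \ref{rem1} gives $\lambda_{0,\omega_0}(\chi_{0,\omega_0},J\phi_{\omega_0})_H=0$, hence the orthogonality since $\lambda_{0,\omega_0}<0$), the vector $p$ lies in the cone where (B2b) gives $\dual{S_{\omega_0}''(\phi_{\omega_0})p}{p}>0$ — provided $p\neq 0$, which holds because $\chi_{0,\omega_0}$ and $w_0$ cannot both drop out (if $\beta=0$ then $\alpha\neq0$ and $p=\alpha\chi_{0,\omega_0}\neq0$; if $\beta\neq0$ then $p$ has nonzero $w_0$-component since $\chi_{0,\omega_0}$ and $w_0$ are linearly independent, being respectively non-orthogonal and orthogonal to $\chi_{1,\omega_0}$, unless... one must also rule out $w_0$ parallel to $\chi_{0,\omega_0}$, but then $w_0\perp_H\phi_{\omega_0}$ forces $\alpha\neq0$ case again). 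On the other hand, expanding $\dual{S_{\omega_0}''(\phi_{\omega_0})p}{p}=\alpha^2\lambda_{0,\omega_0}+2\alpha\beta\dual{S_{\omega_0}''(\phi_{\omega_0})\chi_{0,\omega_0}}{w_0}+\beta^2\dual{S_{\omega_0}''(\phi_{\omega_0})w_0}{w_0}$ and noting that the cross term is $\lambda_{0,\omega_0}(\chi_{0,\omega_0},w_0)_H=0$ (here I need $(\chi_{0,\omega_0},w_0)_H=0$, which is \emph{not} automatic; this is the subtle point — see below), I would get $\dual{S_{\omega_0}''(\phi_{\omega_0})p}{p}=\alpha^2\lambda_{0,\omega_0}+\beta^2\dual{S_{\omega_0}''(\phi_{\omega_0})w_0}{w_0}\le 0$, a contradiction.

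The real obstacle, which I anticipate is exactly what forces the extra input $d''(\omega_0)>0$, is that the cross term above need not vanish: we have no a priori reason for $w_0$ to be $H$-orthogonal to $\chi_{0,\omega_0}$. The standard resolution (this is the Grillakis–Shatah–Strauss mechanism, and the reason this lemma, unlike Lemma \ref{lem7}, has the sign condition on $d''$) is to go through the spectral decomposition relative to $S_{\omega_0}''(\phi_{\omega_0})$ and the constraint: one shows that on the subspace $\{(\phi_{\omega_0},w)_H=(\chi_{1,\omega_0},w)_H=(J\phi_{\omega_0},w)_H=0\}$ the quadratic form is positive definite by computing, as in Theorem 3.3 of \cite{GSS1}, that the only possible direction of non-positivity is governed by $\phi_{\omega}'$ via \eqref{eq:7.1}, \eqref{eq:7.2}: from $S_{\omega_0}''(\phi_{\omega_0})\phi_{\omega_0}'=Q'(\phi_{\omega_0})=I\phi_{\omega_0}$ and $d''(\omega_0)=-(\phi_{\omega_0},\phi_{\omega_0}')_H$, the condition $d''(\omega_0)>0$ gives the crucial sign that makes the constrained form positive. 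Concretely, I would argue: given $w$ in the constraint set, decompose it using $\chi_{0,\omega_0}$ and a vector in the (B2b)-positive cone, solve a $2\times2$ linear system to land in that cone while preserving the constraints, and track the resulting quadratic form; the term involving $d''(\omega_0)$ enters with a favorable sign precisely when $d''(\omega_0)>0$. This is the content I would import wholesale from \cite{GSS1}, citing Theorem 3.3 there.

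Once the strict positivity is established, the second step upgrades it to the coercive estimate by the weak-compactness argument already used in Lemma \ref{lem7}: suppose (A3) fails on the subspace in question; extract a sequence $(w_n)$ with $\|w_n\|_X=1$, the three $H$-orthogonality constraints, and $\dual{S_{\omega_0}''(\phi_{\omega_0})w_n}{w_n}\to 0$; pass to a weakly convergent subsequence $w_{n'}\rightharpoonup w$ in $X$. Using the weak lower semicontinuity and the dichotomy in (B3), either $w=0$, in which case $\liminf\dual{S_{\omega_0}''(\phi_{\omega_0})w_{n'}}{w_{n'}}>0$ contradicts the choice of sequence, or $w\neq0$, and then the compact embedding $X\hookrightarrow H$ (forced by the structure: weak convergence in $X$ plus the semicontinuity hypotheses of (B3) let me pass the three $H$-orthogonality conditions to the limit, as is done explicitly in the proof of Lemma \ref{lem7}) gives $(\phi_{\omega_0},w)_H=(\chi_{1,\omega_0},w)_H=(J\phi_{\omega_0},w)_H=0$ and $\dual{S_{\omega_0}''(\phi_{\omega_0})w}{w}\le 0$, contradicting the strict positivity from step one. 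Hence the coercivity constant $k_0>0$ exists.

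The main obstacle is genuinely the first step — matching the two different triples of constraints (the $\chi_{0}$-based one from (B2b) versus the $\phi_{\omega}$-based one in the conclusion) — and it is there that the hypothesis $d''(\omega_0)>0$ is indispensable; everything after that is the routine GSS weak-compactness packaging. I would keep the write-up short by citing Theorem 3.3 of \cite{GSS1} for the spectral bookkeeping and Lemma \ref{lem7} (or its proof) as the template for the limiting argument.
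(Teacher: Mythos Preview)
Your two-step architecture matches the paper's: first prove strict positivity of $\dual{S_{\omega_0}''(\phi_{\omega_0})\cdot}{\cdot}$ on the constraint subspace, then upgrade to coercivity by the weak-compactness argument of Lemma \ref{lem7}. The second step is correct and is exactly how the paper proceeds.

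The first step, however, is not actually carried out in your proposal. Your initial attempt has an internal inconsistency: you choose $(\alpha,\beta)$ so that $p=\alpha\chi_{0,\omega_0}+\beta w_0$ is $H$-orthogonal to $\phi_{\omega_0}$, but then invoke (B2b), whose positivity conclusion requires $p$ to be $H$-orthogonal to $\chi_{0,\omega_0}$ (in addition to $\chi_{1,\omega_0}$ and $J\phi_{\omega_0}$). These are different constraints, and neither choice makes the Lemma \ref{lem7} template close: with $p\perp_H\chi_{0,\omega_0}$ the resulting quadratic form picks up a term $|\lambda_{0,\omega_0}|(\chi_{0,\omega_0},w_0)_H^2\ge 0$ of the wrong sign; with $p\perp_H\phi_{\omega_0}$ you cannot invoke (B2b). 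You correctly diagnose the obstruction, but your fallback---citing Theorem 3.3 of \cite{GSS1}---does not apply as stated, since that result treats the single-negative-eigenvalue setting (B2a), whereas here there are two negative directions. The adaptation must be done explicitly, and this is where the argument has content.

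The paper (following \cite{mae2}) does this bookkeeping concretely. Decompose \emph{both} $w$ and $\phi_{\omega_0}'$ as $w=a_0\chi_{0,\omega_0}+p$ and $\phi_{\omega_0}'=b_0\chi_{0,\omega_0}+b_2 J\phi_{\omega_0}+q$ with $p,q\in P_{\omega_0}:=\{\chi_{0,\omega_0},\chi_{1,\omega_0},J\phi_{\omega_0}\}^{\perp_H}$; the $\chi_{1,\omega_0}$-components vanish, for $w$ by hypothesis and for $\phi_{\omega_0}'$ because $\lambda_{1,\omega_0}(\chi_{1,\omega_0},\phi_{\omega_0}')_H=\dual{S_{\omega_0}''(\phi_{\omega_0})\phi_{\omega_0}'}{\chi_{1,\omega_0}}=(\phi_{\omega_0},\chi_{1,\omega_0})_H=0$. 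From \eqref{eq:7.2} and $d''(\omega_0)>0$ one gets $b_0^2|\lambda_{0,\omega_0}|>\dual{S_{\omega_0}''(\phi_{\omega_0})q}{q}\ge 0$, hence $b_0\neq0$. The constraint $(\phi_{\omega_0},w)_H=0$ reads, via \eqref{eq:7.1}, as $a_0b_0\lambda_{0,\omega_0}+\dual{S_{\omega_0}''(\phi_{\omega_0})q}{p}=0$, which forces $p\neq0$. Then
\[
b_0^2|\lambda_{0,\omega_0}|\,\dual{S_{\omega_0}''(\phi_{\omega_0})w}{w}
=-a_0^2b_0^2\lambda_{0,\omega_0}^2+b_0^2|\lambda_{0,\omega_0}|\,\dual{S_{\omega_0}''(\phi_{\omega_0})p}{p},
\]
and replacing $b_0^2|\lambda_{0,\omega_0}|$ in the last term by the strictly smaller $\dual{S_{\omega_0}''(\phi_{\omega_0})q}{q}$, then applying Cauchy--Schwarz for the positive form $\dual{S_{\omega_0}''(\phi_{\omega_0})\cdot}{\cdot}$ on $P_{\omega_0}$, yields strict positivity. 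This is the concrete mechanism through which $d''(\omega_0)>0$ enters; you had the right ingredients ($\phi_{\omega_0}'$, \eqref{eq:7.1}, \eqref{eq:7.2}) but not the assembly, and it does not reduce to the Lemma \ref{lem7} pattern.
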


\begin{proof}
As in the proof of Lemma \ref{lem7}, it suffices to prove that 
$\dual{S_{\omega_0}''(\phi_{\omega_0})w}{w}>0$ 
for all $w\in X$ satisfying $w\ne 0$ and 
$(\phi_{\omega_0},w)_H=(\chi_{1,\omega_0},w)_H=(J\phi_{\omega_0},w)_H=0$. 
We define 
$$P_{\omega}=\{p\in X:
(\chi_{0,\omega},p)_H=(\chi_{1,\omega},p)_H=(J\phi_{\omega},p)_H=0\}.$$
Let $w\in X$ satisfy $w\ne 0$ and 
$(\phi_{\omega_0},w)_H=(\chi_{1,\omega_0},w)_H=(J\phi_{\omega_0},w)_H=0$. 
We decompose $w$ and $\phi_{\omega_0}'$ as 
\begin{align*}
w&=a_0\chi_{0,\omega_0}+a_1\chi_{1,\omega_0}+a_2J\phi_{\omega_0}+p, \\
\phi_{\omega_0}'&=b_0\chi_{0,\omega_0}+b_1\chi_{1,\omega_0}+b_2J\phi_{\omega_0}+q,
\end{align*}
where $a_j,b_j\in \R$ and $p,q\in P_{\omega_0}$. 
Since $(\chi_{1,\omega_0},w)_H=(J\phi_{\omega_0},w)_H=0$, we have $a_1=a_2=0$. 
Moreover, by the first equation of \eqref{eq:7.1}, 
$$\lambda_{1,\omega_0}b_1
=(\lambda_{1,\omega_0}\chi_{1,\omega_0},\phi_{\omega_0}')_H
=\dual{S_{\omega_0}''(\phi_{\omega_0})\chi_{1,\omega_0}}{\phi_{\omega_0}'}
=(\chi_{1,\omega_0},\phi_{\omega_0})_H=0.$$
Thus, $b_1=0$. By \eqref{eq:7.2}, we have 
$$0>-d''(\omega_0)
=\dual{S_{\omega_0}''(\phi_{\omega_0})\phi_{\omega_0}'}{\phi_{\omega_0}'}
=b_0^2\lambda_{0,\omega_0}+\dual{S_{\omega_0}''(\phi_{\omega_0})q}{q}.$$
In particular, $b_0\ne 0$. 
On the other hand, by the first equation of \eqref{eq:7.1}, 
$$0=(\phi_{\omega_0},w)_H
=\dual{S_{\omega_0}''(\phi_{\omega_0})\phi_{\omega_0}'}{w}
=a_0b_0\lambda_{0,\omega_0}+\dual{S_{\omega_0}''(\phi_{\omega_0})q}{p}.$$
In particular, $p\ne 0$. By the Cauchy-Schwarz inequality, we have
\begin{align*}
&b_0^2|\lambda_{0,\omega_0}|\dual{S_{\omega_0}''(\phi_{\omega_0})w}{w} 
=b_0^2|\lambda_{0,\omega_0}|\{a_0^2\lambda_{0,\omega_0}
+\dual{S_{\omega_0}''(\phi_{\omega_0})p}{p}\} \\
&>-a_0^2b_0^2\lambda_{0,\omega_0}^2+\dual{S_{\omega_0}''(\phi_{\omega_0})p}{p}
\dual{S_{\omega_0}''(\phi_{\omega_0})q}{q} \\
&\ge -a_0^2b_0^2\lambda_{0,\omega_0}^2
+\dual{S_{\omega_0}''(\phi_{\omega_0})q}{p}^2=0.
\end{align*}
Therefore, $\dual{S_{\omega_0}''(\phi_{\omega_0})w}{w}>0$. 
This completes the proof. 
\end{proof}

\begin{proof}[Proof of Corollary \ref{cor4}]
We verify that $\phi_{\omega_0}$ satisfies the assumptions 
(A1), (A2a) and (A3) of Theorem \ref{thm1}. (A1) follows from (B1). 
Let $\psi=\chi_{1,\omega}$. Then, (A2a) follows from (B2b). 
Finally, (A3) follows from Lemma \ref{lem9}. 
\end{proof}

\section{Examples}\label{sect:examples}

\subsection{Linear Schr\"odinger equation on a bounded interval} \label{ss:1}

We begin with a simple \lq\lq counter-example" 
to emphasize the role of (A3) in Theorem \ref{thm1}. 
We consider the linear Schr\"odinger equation on the interval $(0,\pi)$ 
with zero-Dirichlet boundary conditions 
\begin{equation}\label{eq:8.1}
\left\{\begin{array}{ll}
i\partial_tu-\partial_x^2u=0, &\quad  t\in \R,~ x\in (0,\pi), \\
u(t,0)=u(t,\pi)=0, &\quad t\in \R. 
\end{array}\right.
\end{equation}
Let $H=L^2(0,\pi)$ and $X=H^1_0(0,\pi)$ be real Hilbert spaces with inner products 
$$(u,v)_H=\Re \int_{0}^{\pi}u(x)\overline{v(x)}\,dx, \quad 
(u,v)_X=(\partial_xu,\partial_xv)_H.$$
We define $E(u)=(1/2)\|\partial_xu\|_H^2$ and $Ju=iu$ for $u\in X$. 
$\Trans$ is given by $\Trans (s)u=e^{is}u$ for $u\in X$ and $s\in \R$. 
For $u_0\in X$, the solution $u(t)$ of \eqref{eq:8.1} with $u(0)=u_0$ is expressed as 
$$u(t)=\sum_{n=1}^{\infty}a_n\Trans(n^2t)\varphi_n, \quad 
\varphi_n(x)=\sqrt{\frac{2}{\pi}}\,\sin nx, ~ 
a_n=\int_{0}^{\pi}u_0(x)\varphi_n(x)\,dx.$$
For each $n\in \N$, the bound state $\Trans(n^2t)\varphi_n$ is stable 
in the sense of Definition in Section \ref{sect:form}. 
In particular, we consider the case $n=2$, 
and put $\omega=n^2=4$, $\phi_{\omega}=\varphi_2$ and $\psi=\varphi_1$. 
Then, (A1) and (A2a) are satisfied. 
On the other hand, the inequality \eqref{eq:3.2} holds for $w\in X$ satisfying 
$(\phi_{\omega},w)_H=(J\phi_{\omega},w)_H=(\psi,w)_H=0$ and $(J\psi,w)_H=0$, 
but (A3) does not hold. 
This simple example shows optimality of (A3) in Theorem \ref{thm1}. 

\subsection{NLS with a delta function potential}\label{ss:delta}

We consider a nonlinear Schr\"odinger equation with a delta function potential 
\begin{equation}\label{eq:8.2}
i\partial_tu-\partial_x^2u+\gamma \delta(x)u=|u|^{p-1}u, 
\quad (t,x)\in \R\times \R,
\end{equation}
where $1<p<\infty$, $\gamma\in \R$ and $\delta(x)$ is the delta measure at the origin. 
Although the stability problem of bound states for \eqref{eq:8.2} has been studied 
by many authors (see \cite{FJ,FOO,GHW,LFF}), 
we give some remarks to complement their results. 
For simplicity, we consider the repulsive potential case $\gamma>0$ only. 
As real Hilbert spaces $H$ and $X$, we take $H=L^2(\R)$ and $X=H^1(\R)$
or $H=L^2_{\even}(\R)$ and $X=H^1_{\even}(\R)$. 
We define the inner products of $H$ and $X$ by 
\begin{align*}
&(u,v)_H=\Re \int_{\R}u(x)\overline{v(x)}\,dx, \\
&(u,v)_X=(\partial_xu,\partial_xv)_H+(u,v)_H+\gamma \Re [u(0)\overline{v(0)}]. 
\end{align*}
Note that by the embedding $H^1(\R)\hookrightarrow C_b(\R)$, 
the norm $\|\cdot\|_X$ is equivalent to the usual norm in $H^1(\R)$. 
We define $E:X\to \R$ and $J:X\to X$ by 
$$E(u)=\frac{1}{2}\|\partial_xu\|_{L^2}^2+\frac{\gamma}{2}|u(0)|^2
-\frac{1}{p+1}\|u\|_{L^{p+1}}^{p+1}, \quad Ju=iu$$ 
for $u\in X$. Then, $E\in C^2(X,\R)$ for $1<p<\infty$, $E\in C^3(X,\R)$ if $p>2$, 
and $\Trans$ is given by $\Trans (s)u=e^{is}u$ for $u\in X$ and $s\in \R$. 
Moreover, \eqref{eq:8.2} is written in the form \eqref{eq:2.3}, 
and all the requirements in Section \ref{sect:form} are satisfied. 

For $\omega\in \Omega:=(-\infty,-\gamma^2/4)$, 
\eqref{eq:8.2} has a bound state $e^{i\omega t}\phi_{\omega}(x)$, 
where $\phi_{\omega}\in H^1(\R)$ is a positive solution of 
\begin{equation}\label{eq:8.3}
-\partial_x^2\phi+\gamma \delta(x)\phi-\omega \phi-|\phi|^{p-1}\phi=0, 
\quad x\in \R.
\end{equation}
The positive solution $\phi_{\omega}$ of \eqref{eq:8.3} is given by 
\begin{equation}\label{eq:8.4}
\phi_{\omega}(x)=\left\{\begin{array}{ll}
\varphi_{\omega}(x-b_{\omega}), &\quad x\ge 0, \\
\varphi_{\omega}(x+b_{\omega}), &\quad x<0, 
\end{array}\right.
\end{equation}
where $b_{\omega}=2\tanh^{-1}({\gamma}/{2\sqrt{-\omega}})/[(p-1)\sqrt{-\omega}\,]$, and 
$$\varphi_{\omega}(x)=\left(\frac{-(p+1)\omega}{2}\right)^{1/(p-1)}
\left\{\cosh \left(\frac{(p-1)\sqrt{-\omega}}{2}x \right)\right\}^{-2/(p-1)}$$
is a positive and even solution of 
\begin{equation}\label{eq:8.5}
-\partial_x^2\varphi-\omega \varphi-|\varphi|^{p-1}\varphi=0,\quad x\in \R. 
\end{equation}
Then we see that 
$\omega\mapsto \phi_{\omega}$ is a $C^2$ mapping from $\Omega$ to $X$, and that 
$$R\phi_{\omega}
=-\partial_x^2\phi_{\omega}+\phi_{\omega}+\gamma \delta (x)\phi_{\omega}
=(1+\omega) \phi_{\omega}+|\phi_{\omega}|^{p-1}\phi_{\omega}\in H^1_{\even}(\R).$$
Thus (B1) is satisfied. 
The linearized operator $S_{\omega}''(\phi_{\omega}):X\to X^*$ is given by 
$$\dual{S_{\omega}''(\phi_{\omega})u}{v}
=\dual{L_{\omega}\Re u}{\Re v}+\dual{M_{\omega}\Im u}{\Im v}$$ 
for $u,v\in X$, where 
\begin{align*}
&\dual{L_{\omega}w}{z}=\int_{\R}(\partial_xw\partial_xz-\omega wz
-p\phi_{\omega}(x)^{p-1}wz)\,dx+\gamma w(0)z(0), \\
&\dual{M_{\omega}w}{z}=\int_{\R}(\partial_xw\partial_xz-\omega wz
-\phi_{\omega}(x)^{p-1}wz)\,dx+\gamma w(0)z(0).
\end{align*}
The assumption (B3) is easily verified. It is proved in Lemmas 28 and 29 of \cite{FJ} 
that (B2a) holds for the case $X=H^1_{\even}(\R)$, 
while it is proved in Section 4 of \cite{LFF} that (B2b) holds for the case $X=H^1(\R)$. 
Here, we give a simple proof for the latter fact. 

\begin{lemma}\label{lem10}
$\inf\{\dual{L_{\omega}v}{v}:v\in H^1_{\odd}(\R,\R),~ \|v\|_{L^2}=1\}<0$. 
\end{lemma}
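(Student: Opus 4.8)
The plan is to exhibit an explicit odd test function $v$ on which the quadratic form $\dual{L_{\omega}v}{v}$ is strictly negative. The natural candidate is $v_0 = \partial_x\phi_{\omega}$, or rather its restriction/modification to make it odd. Indeed, differentiating the equation \eqref{eq:8.3} away from $x=0$ shows that $\partial_x^2(\partial_x\phi_{\omega}) + \omega\,\partial_x\phi_{\omega} + p\phi_{\omega}^{p-1}\partial_x\phi_{\omega} = 0$ on each half-line, so $\partial_x\phi_{\omega}$ is a formal zero-mode of the operator $-\partial_x^2 - \omega - p\phi_{\omega}^{p-1}$ on $\R\setminus\{0\}$. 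Since $\phi_{\omega}$ is positive, even, and decreasing on $(0,\infty)$ (from \eqref{eq:8.4}--\eqref{eq:8.5}), the function $\partial_x\phi_{\omega}$ is negative on $(0,\infty)$, positive on $(-\infty,0)$, hence is already odd and in $H^1_{\odd}(\R)$; but it has a jump in derivative at $0$ coming from the delta potential.

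First I would compute $\dual{L_{\omega}v_0}{v_0}$ with $v_0=\partial_x\phi_{\omega}$ by integrating by parts on each half-line and using the pointwise identity above. Because $v_0$ vanishes at $x=0$ (as $\phi_{\omega}$ is even and $C^1$ away from $0$, its derivative is continuous and odd, so $v_0(0)=0$), the boundary term $\gamma\, v_0(0)^2$ drops out, and the integration by parts on $(0,\infty)$ and $(-\infty,0)$ produces only the jump contribution of $\partial_x v_0 = \partial_x^2\phi_{\omega}$ across $0$. From \eqref{eq:8.3}, the jump of $\partial_x\phi_{\omega}$ at $0$ is $\gamma\phi_{\omega}(0)>0$, i.e. $\partial_x\phi_{\omega}(0^+)-\partial_x\phi_{\omega}(0^-) = \gamma\phi_{\omega}(0)$. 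Carrying the computation through, one finds $\dual{L_{\omega}v_0}{v_0} = -\gamma\,\phi_{\omega}(0)\,\partial_x\phi_{\omega}(0^+) \cdot(\text{sign bookkeeping})$; since $\partial_x\phi_{\omega}(0^+)<0$ and $\gamma>0$, this works out to be negative — but I must double-check the sign, since $-\gamma\phi_{\omega}(0)|\partial_x\phi_{\omega}(0^+)|$ could a priori come out with either sign depending on the orientation of the integration-by-parts boundary terms. The cleanest way is: $\dual{L_{\omega}v_0}{v_0} = \int_{\R}\bigl((\partial_x v_0)^2 - \omega v_0^2 - p\phi_{\omega}^{p-1}v_0^2\bigr)\,dx + \gamma v_0(0)^2$, and after IBP on each half this equals $[\,v_0\,\partial_x v_0\,]$-boundary-terms $= -v_0(0^+)\partial_x v_0(0^+) + v_0(0^-)\partial_x v_0(0^-)$; but $v_0(0^\pm)=0$, so one gets $\dual{L_{\omega}v_0}{v_0}=0$ from this naive computation.

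**The subtlety**, and the main obstacle, is precisely that $v_0=\partial_x\phi_{\omega}$ is a genuine zero-mode only for the operator without the delta, so $\dual{L_{\omega}v_0}{v_0}$ equals not $0$ but rather the "wrong-sign" delta contribution: the correct identity is $\dual{L_{\omega}v_0}{v_0} = \gamma\, v_0(0)^2 - \gamma\,\phi_{\omega}(0)\cdot(\text{something})$ — more carefully, $v_0=\partial_x\phi_{\omega}$ solves $L_{\omega}v_0 = -\gamma\delta(x)\partial_x\phi_{\omega} + (\text{jump terms})$, and the point is that $\partial_x\phi_{\omega}$ has a jump at $0$ so that $-\partial_x^2 v_0$ contains a delta of strength $-(v_0(0^+)-v_0(0^-))\delta = 0$ (since $v_0$ is continuous) plus a worse singularity. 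To avoid these distributional headaches I would instead use a smooth odd cutoff of $\partial_x\phi_{\omega}$, say $v_\varepsilon = \eta_\varepsilon(x)\,\partial_x\phi_{\omega}$ where $\eta_\varepsilon$ is odd, equals $\mathrm{sgn}(x)$ for $|x|>\varepsilon$ and is smooth near $0$, arranged so that $v_\varepsilon(0)=0$; then compute $\dual{L_{\omega}v_\varepsilon}{v_\varepsilon}$ and let $\varepsilon\to 0$, or better, directly use $v = |\,\partial_x\phi_{\omega}\,|\cdot\mathrm{sgn}(x)\cdot\text{(mollified near 0)}$ and extract the leading negative term from the delta potential, which is of order $-\gamma\,\phi_{\omega}(0)\,|\partial_x\phi_{\omega}(0^\pm)|$ while the error from mollification is $o(1)$.

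**Alternatively**, the slicker route: use the Sturm oscillation/min-max characterization. The operator $-\partial_x^2 - \omega - p\phi_{\omega}^{p-1}$ on $\R$ (no delta) has $\phi_{\omega}$... no — rather, on each half-line $(0,\infty)$ with a Dirichlet-like condition, the ground state of $L_\omega^{(0)} := -\partial_x^2-\omega-p\phi_{\omega}^{p-1}$ is $\partial_x\phi_{\omega}$ up to sign (it has no zeros on $(0,\infty)$), hence is the bottom eigenfunction there with eigenvalue $0$; gluing the odd extension gives a function that is annihilated by $L_\omega^{(0)}$ on $\R\setminus\{0\}$ and continuous at $0$ with $v(0)=0$. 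Plugging into the form with the $+\gamma v(0)^2$ term contributes nothing, but the correct statement is that this $v$ is NOT in the form domain in the right way — the form $\dual{L_\omega v}{v}$ picks up $-\gamma\phi_\omega(0)\partial_x\phi_\omega(0^+)$ from matching the jump. I would settle the sign by the following clean argument: let $\lambda_1(L_\omega)$ denote the infimum in the Lemma; by the variational principle it is $\le \dual{L_\omega v}{v}/\|v\|_{L^2}^2$ for the mollified $v$ above, and a short explicit computation (the only real computation in the proof) shows this is $<0$ because the delta potential term $\gamma v(0)^2$ can be made $0$ while the remaining integral is strictly negative thanks to $\partial_x\phi_\omega$ being a strict subsolution. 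I expect the write-up to be three or four lines once the right test function is pinned down; the hard part is entirely the bookkeeping of the jump/boundary terms at $x=0$, and I would present it via the mollified $v_\varepsilon$ to keep everything classical.
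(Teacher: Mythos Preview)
Your proposal contains a genuine error that undermines the whole approach. You repeatedly assert that $v_0=\partial_x\phi_{\omega}$ is continuous at $x=0$ with $v_0(0)=0$; this is false. From \eqref{eq:8.4} one has $\partial_x\phi_{\omega}(0^+)=\varphi_{\omega}'(-b_{\omega})=-\varphi_{\omega}'(b_{\omega})$ and $\partial_x\phi_{\omega}(0^-)=\varphi_{\omega}'(b_{\omega})$, so the jump equals $-2\varphi_{\omega}'(b_{\omega})=\gamma\phi_{\omega}(0)>0$ (this is precisely the delta condition in \eqref{eq:8.3}). Hence $\partial_x\phi_{\omega}\notin H^1(\R)$ and cannot serve as a test function. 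Your mollification fix does not rescue this: any odd $v_\varepsilon\in H^1$ with $v_\varepsilon=\partial_x\phi_{\omega}$ on $|x|>\varepsilon$ must bridge a gap of size $\sim\gamma\phi_{\omega}(0)$ over an interval of length $2\varepsilon$, forcing $\int_{-\varepsilon}^{\varepsilon}(\partial_x v_\varepsilon)^2\,dx\gtrsim \varepsilon^{-1}\to+\infty$, so $\dual{L_{\omega}v_\varepsilon}{v_\varepsilon}\to+\infty$ rather than something negative. The later hedges (``strict subsolution'', ``Sturm oscillation'') are not developed into an argument and rest on the same misconception.

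The paper avoids this obstruction by working with $\varphi_{\omega}'$ (the derivative of the solution \emph{without} the delta) rather than $\partial_x\phi_{\omega}$. It builds a one-parameter family $\psi_s\in H^1_{\odd}$ by placing translates $\varphi_{\omega}'(\cdot\mp(b_{\omega}+s))$ on $\{|x|>b_{\omega}+s\}$ and zero in between; since $\varphi_{\omega}'(0)=0$, each $\psi_s$ is continuous and genuinely in $H^1$, with $\psi_s(0)=0$ so the $\gamma$-term drops out. One then computes $f(s)=\dual{L_{\omega}\psi_s}{\psi_s}$, checks $f(0)=0$ and $f'(0)>0$ (a one-line derivative in $s$ of the potential term), and concludes $f(s)<0$ for small $s<0$. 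The essential idea you were reaching for---exploiting that a suitable translate of $\varphi_{\omega}'$ is a zero mode away from the origin---is correct, but the execution must use $\varphi_{\omega}'$ with a variable shift, not $\partial_x\phi_{\omega}$ with a vanishing cutoff.
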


\begin{proof}
Let $s\in (-b_{\omega},\infty)$, and we define 
$$\psi_s(x)=\left\{\begin{array}{ll}
\varphi_{\omega}'(x-b_{\omega}-s), &\quad x>b_{\omega}+s, \\
\varphi_{\omega}'(x+b_{\omega}+s), &\quad x<-b_{\omega}-s, \\
0, &\quad -b_{\omega}-s\le x\le b_{\omega}+s.
\end{array}\right.$$
Then, $\psi_s\in H^1_{\odd}(\R,\R)$ and 
\begin{align*}
f(s):=&\dual{L_{\omega}\psi_s}{\psi_s} \\
=&2\int_{b_{\omega}+s}^{\infty}\{|\varphi_{\omega}''(x-b_{\omega}-s)|^2
-\omega |\varphi_{\omega}'(x-b_{\omega}-s)|^2 \\
&\hspace{20mm} -p\varphi_{\omega}(x-b_{\omega})^{p-1}
|\varphi_{\omega}'(x-b_{\omega}-s)|^2\}\,dx \\
=&\int_{0}^{\infty}\{|\varphi_{\omega}''(y)|^2-\omega |\varphi_{\omega}'(y)|^2
-p\varphi_{\omega}(y+s)^{p-1}|\varphi_{\omega}'(y)|^2\}\,dy.
\end{align*}
Since $\varphi_{\omega}$ is an even solution of \eqref{eq:8.5}, 
we see that $f(0)=0$. Moreover, since 
$$f'(s)=-p(p-1) \int_{0}^{\infty}\varphi_{\omega}(y+s)^{p-2}
\varphi_{\omega}'(y+s)|\varphi_{\omega}'(y)|^2\,dy,$$
we have $f'(0)>0$. Thus, we see that $f(s)<0$ for $s<0$ close to $0$, 
which concludes the lemma. 
\end{proof}

\begin{lemma}\label{lem11}
For each $\omega\in \Omega$, $({\rm B2b})$ holds for $X=H^1(\R)$. 
\end{lemma}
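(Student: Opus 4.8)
The plan is to establish (B2b) for $X = H^1(\R)$ by understanding the spectra of the two scalar operators $L_\omega$ and $M_\omega$ appearing in the block decomposition of $S_\omega''(\phi_\omega)$. Since the quadratic form $\dual{S_\omega''(\phi_\omega)u}{u}$ splits as $\dual{L_\omega \Re u}{\Re u} + \dual{M_\omega \Im u}{\Im u}$, the negative subspace of $S_\omega''(\phi_\omega)$ is the direct sum of the negative subspaces of $L_\omega$ (acting on real-valued $H^1(\R)$) and $M_\omega$ (likewise). So I would first record that $M_\omega \phi_\omega = 0$ (since $\phi_\omega$ solves the stationary equation \eqref{eq:8.3}) and that $\phi_\omega > 0$, so $\phi_\omega$ is the ground state of $M_\omega$; hence $M_\omega \ge 0$ with $0$ a simple eigenvalue, and $M_\omega$ has no negative eigenvalue. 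Therefore all negative directions of $S_\omega''(\phi_\omega)$ come from $L_\omega$.

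Next I would analyze $L_\omega$ on $H^1(\R,\R) = H^1_{\even}(\R,\R) \oplus H^1_{\odd}(\R,\R)$, using that this decomposition reduces $L_\omega$ because $\phi_\omega$ is even and the potential $\phi_\omega^{p-1} + \gamma\delta$ is even. On the even subspace, the result of \cite{FJ} (Lemmas 28 and 29, cited just above) already gives that $L_\omega$ restricted to $H^1_{\even}$ has exactly one negative eigenvalue, with eigenfunction $\chi_{0,\omega}$ even, $\|\chi_{0,\omega}\|_{L^2}=1$, eigenvalue $\lambda_{0,\omega}<0$; this is precisely the content of (B2a) for the even problem. On the odd subspace, Lemma \ref{lem10} shows $\inf\{\dual{L_\omega v}{v} : v \in H^1_{\odd}, \|v\|_{L^2}=1\} < 0$, so by the standard variational argument (minimization of the quadratic form on the $L^2$-sphere in $H^1_{\odd}$, using the weak lower semicontinuity and coercivity from (B3)) the infimum is attained at some odd $\chi_{1,\omega}$ with $\|\chi_{1,\omega}\|_{L^2}=1$ and eigenvalue $\lambda_{1,\omega}<0$, satisfying $L_\omega \chi_{1,\omega} = \lambda_{1,\omega}\chi_{1,\omega}$ weakly. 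Moreover on the odd subspace $L_\omega$ has at most one negative direction: I would argue this from Sturm oscillation theory, observing that $L_\omega$ on $H^1_{\odd}(\R)$ is unitarily equivalent to the half-line Schr\"odinger operator on $(0,\infty)$ with Dirichlet condition at $0$ and potential $-p\phi_\omega^{p-1}$, whose principal eigenfunction must change sign on $(0,\infty)$ in the presence of a second negative eigenvalue — but the would-be ground state there has a node, forcing the true count to be exactly one. (Alternatively, one can compare with the known spectrum of the soliton linearization on the whole line.)

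Combining the two subspaces: $S_\omega''(\phi_\omega)$ has exactly two negative eigenvalues, with orthonormal (in $H$) eigenvectors $\chi_{0,\omega} \in X$ (even, from the real part) and $\chi_{1,\omega} \in X$ (odd, from the real part), and eigenvalues $\lambda_{0,\omega}, \lambda_{1,\omega} < 0$. Since one is even and the other odd, $(\chi_{0,\omega},\chi_{1,\omega})_H = 0$ automatically; and since $\chi_{1,\omega}$ is odd while $\phi_\omega$ is even, $(\chi_{1,\omega},\phi_\omega)_H = 0$ as well. It remains to verify the positivity statement in (B2b): $\dual{S_\omega''(\phi_\omega)p}{p} > 0$ for all $p \ne 0$ with $(\chi_{0,\omega},p)_H = (\chi_{1,\omega},p)_H = (J\phi_\omega,p)_H = 0$. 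Writing $p = p_1 + i p_2$ with $p_1, p_2$ real, the constraint $(J\phi_\omega,p)_H = 0$ reads $(\phi_\omega, p_2)_H = 0$, so $\dual{M_\omega p_2}{p_2} \ge 0$ with equality only if $p_2 \in \R\phi_\omega$, i.e. $p_2 = 0$; and $p_1$ is orthogonal in $L^2$ to both $\chi_{0,\omega}$ and $\chi_{1,\omega}$, hence lies in the spectral subspace of $L_\omega$ on which $L_\omega > 0$, so $\dual{L_\omega p_1}{p_1} \ge 0$ with equality only if $p_1 = 0$. Adding, $\dual{S_\omega''(\phi_\omega)p}{p} > 0$ unless $p = 0$.

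The main obstacle is the eigenvalue count for $L_\omega$ on the odd subspace — showing there is \emph{exactly} one negative eigenvalue, not merely at least one. Lemma \ref{lem10} supplies the lower bound $n(L_\omega|_{\odd}) \ge 1$, but the upper bound requires a genuine spectral argument: either Sturm–Liouville oscillation theory adapted to the half-line Dirichlet problem, or an explicit identification of $\ker L_\omega$ and use of the known Morse index of the soliton equation \eqref{eq:8.5} together with a perturbation/interlacing argument to track how the delta potential at $x=0$ shifts eigenvalues. Everything else — the variational attainment, the block structure, the parity bookkeeping, and the final positivity check — is routine given (B3) and the results already cited from \cite{FJ,LFF}.
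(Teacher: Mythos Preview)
Your proposal is correct and follows essentially the same route as the paper: block decomposition into $L_\omega$ and $M_\omega$, parity splitting of $L_\omega$, use of Lemma~\ref{lem10} to produce an odd negative direction, and orthogonality of $\chi_{0,\omega}$, $\chi_{1,\omega}$, $\phi_\omega$ by parity. The one noteworthy difference is in how the eigenvalue \emph{upper bound} is handled. You flag this as the main obstacle and propose to settle it by Sturm oscillation on the half-line Dirichlet problem (correctly noting that the $\delta$-term drops on odd functions); the paper instead short-circuits the issue by citing Lemmas~31 and~32 of \cite{FJ}, which give directly that $\ker S_\omega''(\phi_\omega)=\mathrm{span}\{J\phi_\omega\}$ and that $S_\omega''(\phi_\omega)$ has at most two negative eigenvalues. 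Combined with Lemma~\ref{lem10} (at least one odd negative direction) and the known even ground state, this pins the count at exactly two without any new spectral analysis, and the positivity clause of (B2b) then follows immediately from the spectral decomposition rather than the explicit check you wrote out. Your argument works, but the paper's is shorter precisely because the hard spectral facts are already available in \cite{FJ}.
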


\begin{proof}
By Lemma 31 of \cite{FJ}, the kernel of $S_{\omega}''(\phi_{\omega})$ 
is spanned by $J\phi_{\omega}$, while by Lemma 32 of \cite{FJ}, 
the number of negative eigenvalues of $S_{\omega}''(\phi_{\omega})$ is at most two. 
Moreover, we know that the first eigenvalue $\lambda_{0,\omega}$ 
is negative, and the corresponding eigenfunction $\chi_{0,\omega}\in H^1_{\even}(\R,\R)$. 
By Lemma \ref{lem10}, we have the second eigenvalue $\lambda_{1,\omega}<0$ 
and the corresponding eigenfunction $\chi_{1,\omega}\in H^1_{\odd}(\R,\R)$. 
Since $\phi_{\omega}\in H^1_{\even}(\R,\R)$, we see that 
$(\chi_{0,\omega},\chi_{1,\omega})_H=(\chi_{1,\omega},\phi_{\omega})_H=0$. 
This completes the proof. 
\end{proof}

By the explicit formula \eqref{eq:8.4}, 
we can compute the derivatives of the function $d(\omega)=S_{\omega}(\phi_{\omega})$. 
The following is proved in \cite{FJ}. 
If $1<p\le 3$, then $d''(\omega)>0$ for all $\omega\in \Omega$. 
If $3<p<5$, then there exists $\omega_*\in \Omega$ such that 
$d''(\omega)<0$ for $\omega\in (\omega_*,-\gamma^2/4)$, 
$d''(\omega)>0$ for $\omega\in (-\infty,\omega_*)$, 
$d''(\omega_*)=0$ and $d'''(\omega_*)<0$. 
If $p\ge 5$, then $d''(\omega)<0$ for all $\omega\in \Omega$. 
In particular, for the case where $1<p\le 3$ and $\omega\in \Omega$ 
and for the case where $3<p<5$ and $\omega\in (-\infty,\omega_*)$, 
it follows from Corollary \ref{cor4} that 
$e^{i\omega t}\phi_{\omega}$ is unstable in $X=H^1(\R)$. 
This result is originally due to Theorem 4 of \cite{LFF}. 
However, it seems that the proof in \cite{LFF} is not complete. 
In fact, in Section 4 of \cite{LFF}, 
linear instability of $e^{i\omega t}\phi_{\omega}$ 
is proved by applying the abstract theory of \cite{GSS2}, 
but there is no proof for the assertion that 
linear instability implies (nonlinear) instability 
(see Remark \ref{rem4} in Section \ref{sect:results}). 
Note that, because of the singularity of delta function potential, 
it seems difficult to apply the results available in the literature 
for this problem directly (see \cite{GO} and the references therein), 
and it might be easier to apply Corollary \ref{cor4}. 
While, for the case where $3<p<5$ and $\omega=\omega_*$, 
it follows from Corollary \ref{cor2} that 
$e^{i\omega t}\phi_{\omega}$ is unstable in $X=H^1_{\even}(\R)$, 
which was left open in Remark 7 of \cite{LFF}. 

There are not so many examples such that 
the derivatives of the function $d(\omega)$ can be computed explicitly. 
In \cite{mae1}, one can find other examples 
to which Corollary \ref{cor2} is applicable. 

\subsection{A system of NLS}\label{ss:system}

We consider a system of nonlinear Schr\"odinger equations of the form 
\begin{equation}\label{eq:8.6}
\left\{\begin{array}{l}
i\partial_tu_1-\Delta u_1=|u_1|u_1+\gamma \overline{u_1}u_2, 
\quad (t,x)\in \R\times \R^N, \\
i\partial_tu_2-2\Delta u_2=2|u_2|u_2+\gamma u_1^2, 
\quad (t,x)\in \R\times \R^N, 
\end{array}\right.
\end{equation}
where $N\le 3$ and $\gamma>0$. 
This is a reduced system of a three-component system studied in \cite{CCO1,CCO2}. 

In what follows, we use the vectorial notation $\vec u=(u_1,u_2)$, 
and it is considered to be a column vector. 
We define the inner products of $H=L^2_{\rad}(\R^N)\times L^2_{\rad}(\R^N)$ 
and $X=H^1_{\rad}(\R^N)\times H^1_{\rad}(\R^N)$ by 
\begin{align*}
&(\vec u,\vec v)_H=\Re \int_{\R^N}u_1(x)\overline{v_1(x)}\,dx
+\Re \int_{\R^N}u_2(x)\overline{v_2(x)}\,dx, \\
&(\vec u, \vec v)_X=(\nabla \vec u,\nabla \vec v)_H+(\vec u,\vec v)_H 
\end{align*}
for $\vec u=(u_1,u_2)$ and $\vec v=(v_1,v_2)$. 
We define $J\vec u=(iu_1,2iu_2)$ and 
\begin{align*}
E(\vec u)=\frac{1}{2}\|\nabla u_1\|_{L^2}^2
+\frac{1}{2}\|\nabla u_2\|_{L^2}^2
-\frac{1}{3}\|u_1\|_{L^3}^3-\frac{1}{3}\|u_2\|_{L^3}^3
-\frac{\gamma}{2} \Re \int_{\R^N}u_1^2\overline{u_2}\,dx, 
\end{align*} 
for $\vec u\in X$. Then, \eqref{eq:8.6} is written in the form \eqref{eq:2.3}, 
$\Trans$ is given by $\Trans (s)\vec u=(e^{is}u_1,e^{2is}u_2)$ 
for $\vec u\in X$ and $s\in \R$, 
and all the requirements in Section \ref{sect:form} are satisfied. 
Let $\omega<0$ and let $\varphi_{\omega}\in H^1_{\rad}(\R^N)$ be 
a unique positive radial solution of 
\begin{equation}\label{eq:8.7}
-\Delta \varphi-\omega \varphi-\varphi^2=0, \quad x\in \R^N.
\end{equation}
In the same way as in \cite{CCO1,CCO2}, it is proved that a semi-trivial solution 
$(0,e^{2i\omega t}\varphi_{\omega})$ of \eqref{eq:8.6} 
is stable if $0<\gamma<1$, and unstable if $\gamma>1$. 
Here, we consider instability of bound states bifurcating from 
the semi-trivial solution at $\gamma=1$. For $0<\gamma<1$, we put 
$\vec \phi_{\omega}=(\alpha \varphi_{\omega},\beta \varphi_{\omega})$, where 
$$\alpha=\frac{2-\gamma-\gamma \sqrt{1+2\gamma (\gamma-1)}}{2+\gamma^3}, \quad 
\beta=\frac{1+\gamma^2+\sqrt{1+2\gamma (\gamma-1)}}{2+\gamma^3}.$$
Then, $S_{\omega}'(\vec \phi_{\omega})=0$, and (A1) is satisfied. 
Note that $\alpha$ and $\beta$ are positive constants, 
and satisfy $|\alpha|+\gamma \beta=1$, $\gamma \alpha^2+2|\beta|\beta=2\beta$, 
and $(\alpha,\beta)\to (0,1)$ as $\gamma\to 1$. 
By applying Theorem \ref{thm1}, we show that the bound state 
$\Trans(\omega t)\vec \phi_{\omega}$ is unstable for any $0<\gamma<1$. 
First, the linearized operator $S_{\omega}''(\vec \phi_{\omega})$ is given by 
\begin{equation}\label{eq:8.8}
\dual{S_{\omega}''(\vec \phi_{\omega})\vec u}{\vec u}
=\dual{\mathcal{L}_R \Re \vec u}{\Re \vec u}
+\dual{\mathcal{L}_I \Im \vec u}{\Im \vec u}
\end{equation}
for $\vec u=(u_1,u_2)\in X$, 
where $\Re \vec u=(\Re u_1,\Re u_2)$, $\Im \vec u=(\Im u_1,\Im u_2)$, and 
\begin{align*}
&\mathcal{L}_R=\left[\begin{array}{cc}
-\Delta-\omega & 0 \\
0 & -\Delta-\omega
\end{array}\right]
-\left[\begin{array}{cc}
(2\alpha+\gamma \beta) \varphi_{\omega} & \gamma \alpha \varphi_{\omega} \\
\gamma \alpha \varphi_{\omega} & 2\beta \varphi_{\omega}
\end{array}\right], \\
&\mathcal{L}_I=\left[\begin{array}{cc}
-\Delta-\omega & 0 \\
0 & -\Delta-\omega
\end{array}\right]
-\left[\begin{array}{cc}
(\alpha-\gamma \beta) \varphi_{\omega} & \gamma \alpha \varphi_{\omega} \\
\gamma \alpha \varphi_{\omega} & \beta \varphi_{\omega}
\end{array}\right].
\end{align*}
For $a\in \R$, we define 
$L_av=-\Delta v-\omega v-a\varphi_{\omega} v$ for $v\in H^1_{\rad}(\R^N,\R)$. 
Then, by orthogonal matrices 
$$A=\frac{1}{\sqrt{\alpha^2+\beta^2}}
\left[\begin{array}{cc}
\alpha & \beta \\
-\beta & \alpha 
\end{array}\right], \quad 
B=\frac{1}{\sqrt{\alpha^2+4\beta^2}}
\left[\begin{array}{cc}
\alpha & 2\beta \\
-2\beta & \alpha 
\end{array}\right],$$
$\mathcal{L}_R$ and $\mathcal{L}_I$ are diagonalized as follows: 
\begin{equation}\label{eq:8.9}
\mathcal{L}_R=A^*
\left[\begin{array}{cc}
L_2 & 0 \\
0 & L_{(2-\gamma)\beta}
\end{array}\right]A, \quad 
\mathcal{L}_I=B^*
\left[\begin{array}{cc}
L_1 & 0 \\
0 & L_{(1-2\gamma)\beta}
\end{array}\right]B.
\end{equation}
Moreover, by elementary computations, 
we see that $1<(2-\gamma)\beta<2$ and $(1-2\gamma)\beta<1$ for $0<\gamma<1$. 
Here, we recall some known results on 
the operator $L_a$ defined on $H^1_{\rad}(\R^N,\R)$. 

\begin{lemma}\label{lem12}
Let $N\le 3$ and 
let $\varphi_{\omega}$ be the positive radial solution of \eqref{eq:8.7}. 
\par \noindent $({\rm i})$ \hspace{1mm}
$L_2$ has one negative eigenvalue, $\ker L_2=\{0\}$, 
and there exists a constant $c_1>0$ such that $\dual{L_2v}{v}\ge c_1 \|v\|_{H^1}^2$ 
for all $v\in H^1_{\rad}(\R^N,\R)$ satisfying $(\varphi_{\omega},v)_{L^2}=0$. 
\par \noindent $({\rm ii})$ \hspace{1mm}
$L_1$ is non-negative, $\ker L_1$ is spanned by $\varphi_{\omega}$, 
and there exists $c_2>0$ such that 
$\dual{L_1v}{v} \ge c_2 \|v\|_{H^1}^2$ 
for all $v\in H^1_{\rad}(\R^N,\R)$ satisfying $(\varphi_{\omega},v)_{L^2}=0$. 
\par \noindent $({\rm iii})$ \hspace{1mm}
If $a<1$, then there exists $c_3>0$ such that 
$\dual{L_a v}{v}\ge c_3 \|v\|_{H^1}^2$ for all $v\in H^1_{\rad}(\R^N,\R)$. 
\par \noindent $({\rm iv})$ \hspace{1mm}
If $1<a<2$, then $\dual{L_a \varphi_{\omega}}{\varphi_{\omega}}<0$, 
and there exists $c_4>0$ such that $\dual{L_a v}{v}\ge c_4 \|v\|_{H^1}^2$ 
for all $v\in H^1_{\rad}(\R^N,\R)$ satisfying $(\varphi_{\omega},v)_{L^2}=0$. 
\end{lemma}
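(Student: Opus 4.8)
The plan is to reduce everything to classical spectral facts about the scalar ground state. Writing $\varphi_{\omega}(x)=(-\omega)\,U(\sqrt{-\omega}\,x)$, the profile $U$ is the unique positive radial solution of $-\Delta U+U=U^{2}$, so $L_{1}$ and $L_{2}$ are, up to scaling, the usual linearized operators around $\varphi_{\omega}$. Differentiating \eqref{eq:8.7} gives $L_{1}\varphi_{\omega}=0$ and $L_{2}(\partial_{\omega}\varphi_{\omega})=\varphi_{\omega}$, and from the equation itself $L_{a}\varphi_{\omega}=(1-a)\varphi_{\omega}^{2}$, in particular $L_{2}\varphi_{\omega}=-\varphi_{\omega}^{2}$. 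Since $\varphi_{\omega}\in L^{\infty}(\R^{N})$ decays at infinity, every $L_{a}$ has essential spectrum $[-\omega,\infty)\subset(0,\infty)$. For the discrete structure in $(\mathrm{i})$ and $(\mathrm{ii})$ I would quote the well-known results for the subcritical scalar NLS ground state (legitimate since the quadratic nonlinearity is $H^{1}$-subcritical for $N\le3$): $L_{1}\ge0$ with $\ker L_{1}=\R\varphi_{\omega}$, and $L_{2}$ has exactly one negative eigenvalue with $\ker L_{2}=\{0\}$ on the radial subspace (on all of $H^{1}$ the kernel is spanned by the non-radial functions $\partial_{j}\varphi_{\omega}$); see \cite{GSS1,wei2} and the references therein.

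For the coercivity in $(\mathrm{i})$ and $(\mathrm{ii})$ I would argue on $L^{2}$ first and then upgrade. For $L_{1}$, the facts $L_{1}\ge0$, $\ker L_{1}=\R\varphi_{\omega}$ and positive essential spectrum produce a spectral gap, hence $\dual{L_{1}v}{v}\ge\delta\|v\|_{L^{2}}^{2}$ for all $v$ with $(\varphi_{\omega},v)_{L^{2}}=0$. For $L_{2}$, note $(L_{2}^{-1}\varphi_{\omega},\varphi_{\omega})_{L^{2}}=(\partial_{\omega}\varphi_{\omega},\varphi_{\omega})_{L^{2}}=\tfrac12\,\partial_{\omega}\|\varphi_{\omega}\|_{L^{2}}^{2}$, and $\|\varphi_{\omega}\|_{L^{2}}^{2}=(-\omega)^{2-N/2}\|U\|_{L^{2}}^{2}$ gives $\partial_{\omega}\|\varphi_{\omega}\|_{L^{2}}^{2}<0$ for $N\le3$; thus $(L_{2}^{-1}\varphi_{\omega},\varphi_{\omega})_{L^{2}}<0$, and the classical constrained-positivity lemma (one negative eigenvalue, trivial kernel, positive essential spectrum, $(L^{-1}\phi,\phi)<0$; cf.\ \cite{GSS1} and \cite{wei2}) yields $\dual{L_{2}v}{v}>0$ for every nonzero $v$ with $(\varphi_{\omega},v)_{L^{2}}=0$. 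In both cases the infimum of the quadratic form over $\{v:(\varphi_{\omega},v)_{L^{2}}=0,\ \|v\|_{L^{2}}=1\}$ is a positive constant, since minimizing sequences are bounded in $H^{1}$ and $v\mapsto\int\varphi_{\omega}v^{2}$ is weakly continuous while $\sigma_{\mathrm{ess}}>0$; the resulting $L^{2}$-bound is promoted to the stated $H^{1}$-bound through $\|\nabla v\|_{L^{2}}^{2}=\dual{L_{a}v}{v}+\omega\|v\|_{L^{2}}^{2}+a\!\int\varphi_{\omega}v^{2}$ and $\varphi_{\omega}\in L^{\infty}$.

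Part $(\mathrm{iii})$ is elementary: for $a<1$ write $L_{a}=L_{1}+(1-a)\varphi_{\omega}$, so $\dual{L_{a}v}{v}=\dual{L_{1}v}{v}+(1-a)\!\int\varphi_{\omega}v^{2}\ge0$, with equality forcing $v\in\ker L_{1}=\R\varphi_{\omega}$ and then $v=0$; hence $\dual{L_{a}v}{v}>0$ for $v\ne0$, so $\inf\sigma(L_{a})>0$ and the $H^{1}$-bound follows as above. For $(\mathrm{iv})$, with $1<a<2$ we have $\dual{L_{a}\varphi_{\omega}}{\varphi_{\omega}}=(1-a)\!\int\varphi_{\omega}^{3}<0$, so $L_{a}$ has at least one negative direction; writing $L_{a}=L_{2}+(2-a)\varphi_{\omega}$ with $2-a>0$ and using min-max together with $(\mathrm{i})$ shows $L_{a}$ has \emph{at most} one. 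The eigenvalues of $L_{a}$ are non-increasing in $a$ (as $L_{a'}\le L_{a}$ in the form sense for $a'>a$), and by $(\mathrm{i})$ the second point of $\sigma(L_{a})$ is positive at $a=2$, hence positive on $(1,2]$; combined with $\dual{L_{a}\varphi_{\omega}}{\varphi_{\omega}}<0$ this gives exactly one negative eigenvalue and $\ker L_{a}=\{0\}$, so $L_{a}$ is boundedly invertible. Finally, $f(a):=(L_{a}^{-1}\varphi_{\omega},\varphi_{\omega})_{L^{2}}$ satisfies $f'(a)=\int\varphi_{\omega}(L_{a}^{-1}\varphi_{\omega})^{2}>0$ (differentiate $L_{a}(L_{a}^{-1}\varphi_{\omega})=\varphi_{\omega}$) and $f(2)=\tfrac12\,\partial_{\omega}\|\varphi_{\omega}\|_{L^{2}}^{2}<0$, so $f(a)<0$ on $(1,2)$; the constrained-positivity lemma then yields the coercivity on $\{v:(\varphi_{\omega},v)_{L^{2}}=0\}$.

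The step I expect to be the real obstacle is the constrained coercivity in $(\mathrm{i})$ and $(\mathrm{iv})$: passing from the index and kernel information to the sharp bound $\dual{L_{a}v}{v}\ge c\|v\|_{H^{1}}^{2}$ on the $L^{2}$-orthogonal complement of $\varphi_{\omega}$. This is exactly where the sign of $(L_{a}^{-1}\varphi_{\omega},\varphi_{\omega})_{L^{2}}$ --- equivalently of $\partial_{\omega}\|\varphi_{\omega}\|_{L^{2}}^{2}$ when $a=2$ --- enters, so one must invoke (and, if one prefers not to merely cite it, reprove) the Grillakis--Shatah--Strauss coercivity lemma and carry out the $L^{2}\to H^{1}$ passage with care. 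The remaining ingredients --- the scaling identities, the signs $\dual{L_{2}\varphi_{\omega}}{\varphi_{\omega}}=-\int\varphi_{\omega}^{3}$ and $L_{a}\varphi_{\omega}=(1-a)\varphi_{\omega}^{2}$, and the monotonicity of the eigenvalues in $a$ --- are routine.
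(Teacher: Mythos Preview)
Your argument is correct, and for parts~(i)--(iii) it matches what the paper does (the paper simply cites \cite{wei1} for~(i) and~(ii) and calls (iii)--(iv) ``immediate''; you have in effect unpacked that citation and the word ``immediate''). The one place you diverge is~(iv), where you take a substantially longer route than necessary.

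For~(iv) you first pin down the spectral structure of $L_a$ (one negative eigenvalue, trivial kernel), then study $f(a)=(L_a^{-1}\varphi_\omega,\varphi_\omega)_{L^2}$, show $f'(a)>0$ and $f(2)<0$, conclude $f(a)<0$ on $(1,2)$, and finally invoke the Grillakis--Shatah--Strauss constrained-positivity lemma. This is valid, but it reproves from scratch something that is already contained in~(i). The paper's ``immediate'' derivation is simply the form-inequality
\[
L_a = L_2 + (2-a)\varphi_\omega \;\ge\; L_2 \qquad (1<a<2),
\]
since $(2-a)\varphi_\omega\ge 0$ pointwise. Hence for any $v$ with $(\varphi_\omega,v)_{L^2}=0$ one gets directly
\[
\dual{L_a v}{v} \;\ge\; \dual{L_2 v}{v} \;\ge\; c_1\|v\|_{H^1}^2
\]
from~(i), with $c_4=c_1$. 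The negative direction is the one-line computation $\dual{L_a\varphi_\omega}{\varphi_\omega}=(1-a)\int\varphi_\omega^3<0$ that you already have. So the ``real obstacle'' you anticipated in~(iv) does not exist: all the constrained-coercivity work is concentrated in~(i), and~(iv) inherits it for free by monotonicity of the form in~$a$. Your detour through $L_a^{-1}$ buys nothing extra here, though the monotonicity identity $f'(a)=\int\varphi_\omega(L_a^{-1}\varphi_\omega)^2$ is a nice observation in its own right.
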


\begin{proof}
The parts (i) and (ii) are well-known (see \cite{wei1}). 
Note that the quadratic nonlinearity in \eqref{eq:8.7} 
is $L^2$-subcritical if and only if $N\le 3$, 
and that the assumption $N\le 3$ is essential for (i). 
The parts (iii) and (iv) follow from (i) and (ii) immediately. 
\end{proof}

We put $\vec \xi=(-\beta \varphi_{\omega},\alpha \varphi_{\omega})$ 
and $\vec \psi=\vec \xi/\|\vec \xi\|_H$. 
Then, $A\vec \psi=(0,\varphi_{\omega})/\|\varphi_{\omega}\|_{L^2}$. 
By Lemma \ref{lem12} (iv), we have 
$$\dual{S_{\omega}''(\vec \phi_{\omega})\vec \psi}{\vec \psi}
=\dual{\mathcal{L}_R \vec \psi}{\vec \psi}
=\dual{L_{(2-\gamma)\beta} \varphi_{\omega}}{\varphi_{\omega}}
/\|\varphi_{\omega}\|_{L^2}^2<0,$$ 
and (A2a) is satisfied. Next, we show two lemmas to prove (A3). 

\begin{lemma}\label{lem13}
There exists a constant $k_1>0$ such that 
$\dual{\mathcal{L}_R \vec v}{\vec v}\ge k_1 \|\vec v\|_{X}^2$ 
for all $\vec v\in H^1_{\rad}(\R^N,\R)^2$ satisfying 
$(\vec \phi_{\omega},\vec v)_H=0$ and $(\vec \xi,\vec v)_H=0$. 
\end{lemma}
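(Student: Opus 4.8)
The plan is to diagonalize the two-component quadratic form $\dual{\mathcal{L}_R\vec v}{\vec v}$ by means of the constant orthogonal matrix $A$ from \eqref{eq:8.9}, turning it into a sum of two scalar forms, each of which is coercive on the $L^2$-orthogonal complement of $\varphi_{\omega}$ thanks to Lemma \ref{lem12}. Concretely, I would set $\vec w=(w_1,w_2):=A\vec v$. Since $A$ has constant entries it commutes with $-\Delta$, and being orthogonal it preserves the $H$- and $X$-norms and maps real radial functions to real radial functions; hence, by \eqref{eq:8.9},
\[
\dual{\mathcal{L}_R\vec v}{\vec v}=\dual{L_2 w_1}{w_1}+\dual{L_{(2-\gamma)\beta}w_2}{w_2},
\qquad \|\vec v\|_X^2=\|\vec w\|_X^2=\|w_1\|_{H^1}^2+\|w_2\|_{H^1}^2 .
\]

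Next I would translate the two linear constraints. A direct computation from the definition of $A$ and of $\vec\phi_{\omega}=(\alpha\varphi_{\omega},\beta\varphi_{\omega})$, $\vec\xi=(-\beta\varphi_{\omega},\alpha\varphi_{\omega})$ gives $A\vec\phi_{\omega}=(\sqrt{\alpha^2+\beta^2}\,\varphi_{\omega},0)$ and $A\vec\xi=(0,\sqrt{\alpha^2+\beta^2}\,\varphi_{\omega})$. Since $A$ is orthogonal, $(\vec\phi_{\omega},\vec v)_H=\sqrt{\alpha^2+\beta^2}\,(\varphi_{\omega},w_1)_{L^2}$ and $(\vec\xi,\vec v)_H=\sqrt{\alpha^2+\beta^2}\,(\varphi_{\omega},w_2)_{L^2}$, and because $\alpha^2+\beta^2>0$ the hypotheses $(\vec\phi_{\omega},\vec v)_H=(\vec\xi,\vec v)_H=0$ are equivalent to $(\varphi_{\omega},w_1)_{L^2}=(\varphi_{\omega},w_2)_{L^2}=0$. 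At this point Lemma \ref{lem12}(i) gives $\dual{L_2 w_1}{w_1}\ge c_1\|w_1\|_{H^1}^2$, and, since $1<(2-\gamma)\beta<2$ for $0<\gamma<1$ as recorded just before the lemma, Lemma \ref{lem12}(iv) applies to $L_{(2-\gamma)\beta}$ and gives $\dual{L_{(2-\gamma)\beta}w_2}{w_2}\ge c_4\|w_2\|_{H^1}^2$. Adding these and using the norm identity above yields the claim with $k_1=\min\{c_1,c_4\}$.

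I do not expect a genuine obstacle here: the argument is a bookkeeping reduction to Lemma \ref{lem12}. The one point that really carries the lemma is the algebraic inequality $1<(2-\gamma)\beta<2$, which is exactly what places the second diagonal block $L_{(2-\gamma)\beta}$ in the coercive regime (iv) of Lemma \ref{lem12}; if this failed (e.g. if the coefficient were $\ge 2$) the block would not be covered by Lemma \ref{lem12} and a separate spectral analysis would be needed. The remaining care goes into checking that $A$ indeed intertwines $\mathcal{L}_R$ with the diagonal operator as in \eqref{eq:8.9} and that the two constraints decouple as above, both of which are routine matrix computations.
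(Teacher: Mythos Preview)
Your proposal is correct and follows essentially the same approach as the paper: diagonalize via $\vec w=A\vec v$ using \eqref{eq:8.9}, translate the two orthogonality constraints into $(\varphi_\omega,w_1)_{L^2}=(\varphi_\omega,w_2)_{L^2}=0$, and then apply Lemma~\ref{lem12}(i) and (iv) componentwise. The only difference is cosmetic---you spell out the computation of $A\vec\phi_\omega$ and $A\vec\xi$ and name the final constant $k_1=\min\{c_1,c_4\}$, whereas the paper records the same facts more tersely.
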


\begin{proof}
By \eqref{eq:8.9}, we have $\dual{\mathcal{L}_R \vec v}{\vec v}
=\dual{L_2w_1}{w_1}+\dual{L_{(2-\gamma)\beta}w_2}{w_2}$, 
where $\vec w=A\vec v$. Since $(\varphi_{\omega},w_1)_{L^2}
=(\vec \phi_{\omega},\vec v)_H/\sqrt{\alpha^2+\beta^2}=0$, 
Lemma \ref{lem12} (i) implies 
$\dual{L_2w_1}{w_1}\ge c_1 \|w_1\|_{H^1}^2$. Moreover, since 
$(\varphi_{\omega},w_2)_{L^2}=(\vec \xi,\vec v)_H/\sqrt{\alpha^2+\beta^2}=0$ 
and $1<(2-\gamma)\beta<2$, Lemma \ref{lem12} (iv) implies 
$\dual{L_{(2-\gamma)\beta}w_2}{w_2}\ge c_4 \|w_2\|_{H^1}^2$. 
Since $\|\vec w\|_X=\|\vec v\|_X$, this completes the proof. 
\end{proof}

\begin{lemma}\label{lem14}
There exists a constant $k_2>0$ such that 
$\dual{\mathcal{L}_I \vec v}{\vec v}\ge k_2 \|\vec v\|_{X}^2$ 
for all $\vec v\in H^1_{\rad}(\R^N,\R)^2$ satisfying 
$(\vec \eta,\vec v)_H=0$, where 
$\vec \eta=(\alpha \varphi_{\omega},2\beta \varphi_{\omega})$. 
\end{lemma}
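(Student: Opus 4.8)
\textbf{Proof plan for Lemma \ref{lem14}.}
The plan is to mimic exactly the structure of the proof of Lemma \ref{lem13}, using the diagonalization \eqref{eq:8.9} for $\mathcal{L}_I$ in place of the one for $\mathcal{L}_R$. First I would set $\vec w=B\vec v$, so that by \eqref{eq:8.9} we have $\dual{\mathcal{L}_I \vec v}{\vec v}=\dual{L_1 w_1}{w_1}+\dual{L_{(1-2\gamma)\beta}w_2}{w_2}$, and since $B$ is orthogonal, $\|\vec w\|_X=\|\vec v\|_X$. The second summand is unconditionally coercive: because $(1-2\gamma)\beta<1$ for $0<\gamma<1$, Lemma \ref{lem12}(iii) gives $\dual{L_{(1-2\gamma)\beta}w_2}{w_2}\ge c_3\|w_2\|_{H^1}^2$ with no orthogonality constraint needed.

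The first summand $\dual{L_1 w_1}{w_1}$ requires the orthogonality condition, since $L_1$ has a kernel spanned by $\varphi_{\omega}$. Here I would check that the single constraint $(\vec \eta,\vec v)_H=0$ is precisely designed so that $(\varphi_{\omega},w_1)_{L^2}=0$. Indeed, the first row of $B$ is $(\alpha,2\beta)/\sqrt{\alpha^2+4\beta^2}$, so $w_1=(\alpha v_1+2\beta v_2)/\sqrt{\alpha^2+4\beta^2}$, and therefore $(\varphi_{\omega},w_1)_{L^2}=(\alpha(\varphi_{\omega},v_1)_{L^2}+2\beta(\varphi_{\omega},v_2)_{L^2})/\sqrt{\alpha^2+4\beta^2}=(\vec \eta,\vec v)_H/\sqrt{\alpha^2+4\beta^2}=0$. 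With this in hand, Lemma \ref{lem12}(ii) yields $\dual{L_1 w_1}{w_1}\ge c_2\|w_1\|_{H^1}^2$.

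Combining the two bounds gives $\dual{\mathcal{L}_I \vec v}{\vec v}\ge \min\{c_2,c_3\}(\|w_1\|_{H^1}^2+\|w_2\|_{H^1}^2)=\min\{c_2,c_3\}\|\vec w\|_{H^1\times H^1}^2$, and since the $X$-norm on $H^1_{\rad}(\R^N,\R)^2$ coincides with the $H^1\times H^1$ norm and $\|\vec w\|_X=\|\vec v\|_X$, we conclude with $k_2=\min\{c_2,c_3\}$. I expect no genuine obstacle here: the only point needing care is the bookkeeping that the orthogonality constraint $(\vec \eta,\vec v)_H=0$ transforms under $B$ into exactly $(\varphi_{\omega},w_1)_{L^2}=0$ — that is, that $\vec \eta$ is (a multiple of) $B^*$ applied to $(\varphi_{\omega},0)$ — and that the inequalities $(1-2\gamma)\beta<1$ and $\ker L_1=\mathrm{span}\{\varphi_{\omega}\}$ (already recorded before Lemma \ref{lem12} and in Lemma \ref{lem12}(ii)) are invoked correctly.
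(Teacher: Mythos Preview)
Your proposal is correct and follows essentially the same argument as the paper: set $\vec w=B\vec v$, use the diagonalization \eqref{eq:8.9} to split $\dual{\mathcal{L}_I\vec v}{\vec v}$ into $\dual{L_1w_1}{w_1}+\dual{L_{(1-2\gamma)\beta}w_2}{w_2}$, translate the constraint $(\vec\eta,\vec v)_H=0$ into $(\varphi_\omega,w_1)_{L^2}=0$ to apply Lemma~\ref{lem12}(ii), and use $(1-2\gamma)\beta<1$ with Lemma~\ref{lem12}(iii) for the second term. Your bookkeeping on how the orthogonality condition transforms under $B$ is in fact slightly more explicit than the paper's version.
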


\begin{proof}
By \eqref{eq:8.9}, we have $\dual{\mathcal{L}_I \vec v}{\vec v}
=\dual{L_1w_1}{w_1}+\dual{L_{(1-2\gamma) \beta}w_2}{w_2}$, 
where $\vec w=B\vec v$. 
Since $(\varphi_{\omega},w_1)_{L^2}=(\vec \eta,\vec v)_H/\sqrt{\alpha^2+4\beta^2}=0$, 
Lemma \ref{lem12} (ii) implies $\dual{L_1\tilde v_1}{w_1}\ge c_2 \|w_1\|_{H^1}^2$. 
Moreover, since $(1-2\gamma) \beta<1$, Lemma \ref{lem12} (iii) implies 
$\dual{L_{(1-2\gamma) \beta}w_2}{w_2}\ge c_3 \|w_2\|_{H^1}^2$. 
This completes the proof. 
\end{proof}

We verify (A3). Let $\vec w\in  X$ satisfy $(\vec \phi_{\omega},\vec w)_H
=(J\vec \phi_{\omega},\vec w)_H=(\vec \psi,\vec w)_H=0$. 
Since $(\vec \phi_{\omega},\Re \vec w)_H=(\vec \phi_{\omega},\vec w)_H=0$ 
and $(\vec \xi,\Re \vec w)_H=\|\vec \xi\|_H(\vec \psi,\vec w)_H=0$, 
it follows from Lemma \ref{lem13} that 
$\dual{\mathcal{L}_R \Re \vec w}{\Re \vec w}\ge k_1\|\Re \vec w\|_{X}^2$. 
While, since $(\vec \eta,\Im \vec w)_H=-(J\phi_{\omega},\vec w)_H=0$, 
Lemma \ref{lem14} implies 
$\dual{\mathcal{L}_I \Im \vec w}{\Im \vec w}\ge k_2\|\Im \vec w\|_{X}^2$. 
Thus, by \eqref{eq:8.8}, we see that (A3) is satisfied. 
In conclusion, it follows from Theorem \ref{thm1} that the bound state 
$\Trans(\omega t)\vec \phi_{\omega}$ is unstable for any $0<\gamma<1$. 

Finally, we consider instability of semi-trivial solution 
$\Trans(\omega t)(0,\varphi_{\omega})$ at the bifurcation point $\gamma=1$. 
In this case, we have $\mathcal{L}_R\vec v=(L_1v_1,L_2v_2)$ and 
$\mathcal{L}_I\vec v=(L_{-1}v_1,L_1v_2)$ 
for $\vec v=(v_1,v_2)\in H^1_{\rad}(\R^N,\R)^2$, 
the kernel of $S_{\omega}''(0,\varphi_{\omega})$ is spanned by 
$J(0,\varphi_{\omega})$ and $(\varphi_{\omega},0)$, 
and (A3) holds with $\psi=(\varphi_{\omega},0)/\|\varphi_{\omega}\|_{L^2}$. 
Since $E\notin C^3(X,\R)$, Corollary \ref{cor1} is not applicable to this problem directly. 
However, by modifying the proof of Theorem \ref{thm2}, 
it is proved that $\Trans(\omega t)(0,\varphi_{\omega})$ is unstable for the case $\gamma=1$. 
The detail will be discussed in a forthcoming paper \cite{CO}. 

\vspace{3mm}\noindent 
\textbf{Acknowledgements.}
This work was supported by JSPS Excellent Young Researchers Overseas Visit Program 
and by JSPS KAKENHI (21540163). 
The author is grateful to Mathieu Colin and Masaya Maeda for useful discussions.

\end{document}